\documentclass[10pt,reqno]{amsart}
\allowdisplaybreaks
\usepackage{amsmath,amstext,amssymb,epsfig}
\usepackage{graphicx}
\usepackage{mathrsfs}
\usepackage{xcolor}
\textwidth 16 cm
\textheight 23 cm
\oddsidemargin 0.2cm
\evensidemargin 1.2cm
\calclayout
\setcounter{page}{1}
\makeatletter
\renewcommand{\@seccntformat}[1]{\bf\csname the#1\endcsname.}
\renewcommand{\section}{\@startsection{section}{1}
	\z@{.7\linespacing\@plus\linespacing}{.5\linespacing}
	{\normalfont\upshape\bfseries\centering}}
\renewcommand{\@biblabel}[1]{\@ifnotempty{#1}{#1.}}
\makeatother
\theoremstyle{plain}
\newtheorem{thm}{Theorem}[section]
\newtheorem{lem}[thm]{Lemma}
\newtheorem{prop}[thm]{Proposition}
\newtheorem{cor}[thm]{Corollary}

\theoremstyle{definition}
\newtheorem{ex}[thm]{Example}
\newtheorem{defn}[thm]{Definition}
\newtheorem{rem}{Remark}[section]

\usepackage{tikz-cd}
\usepackage[parfill]{parskip}
\usepackage[german]{varioref}
\usepackage[all]{xy}
\usepackage{cancel}


\def\T{{\mathcal T}}
\def\>{\succ}
\def\<{\prec}

\def\b{\beta}
\def\a{\alpha}
\def\l{\lambda}
\def\p{\partial}

\def\m{\mu}

\begin{document}
	\title[Sania Asif\textsuperscript{1}, Wang Yao\textsuperscript{2}]{BiHom-(pre-)Poisson conformal algebra}
	\author{Sania Asif\textsuperscript{1}, Wang Yao\textsuperscript{2}}
   \address{\textsuperscript{1}School of Mathematics and statistics, Nanjing University, Nanjing, Jiangsu Province, PR China.}
	\address{\textsuperscript{2}School of Mathematics and statistics, Nanjing University, Nanjing, Jiangsu Province, PR China.} 
		\email{\textsuperscript{1}11835037@zju.edu.cn}
		\email{\textsuperscript{2}wangyao@nuist.edu.cn}

	\keywords{BiHom-associative conformal algebra, BiHom-Lie conformal algebra, BiHom-Poisson conformal algebra, BiHom-preLie conformal algebra, BiHom-pre-Poisson conformal algebra, conformal representation}
	\subjclass[2000]{Primary 17B65, 17B10,15A99, Secondary 16G30}
	\date{\today}
	\thanks{This work is supported by the Jiangsu Natural Science Foundation Project (Natural Science Foundation of Jiangsu Province), Relative Gorenstein cotorsion Homology Theory and Its Applications (No.BK20181406).}
	\begin{abstract}
		The aim of this study is to introduce the notion of BiHom-Poisson conformal algebra, BiHom-pre-Poisson conformal algebra, and their related structures. We show that we can construct many new BiHom-Poisson conformal algebras for a given BiHom-Poisson conformal algebra. Moreover, the tensor product of two BiHom-Poisson conformal algebras is also a BiHom-Poisson conformal algebra. We further describe the conformal bimodule and representation theory of BiHom-Poisson conformal algebra. In addition, we define BiHom-pre-Poisson conformal algebra as the combination of BiHom-preLie conformal algebra and BiHom-dendriform conformal algebra under some compatibility conditions. We also demonstrate that how to construct BiHom-Poisson conformal algebra from BiHom-pre-Poisson conformal algebra and provide the representation theory for BiHom-pre-Poisson conformal algebra. Finally, a detailed description of $\mathcal{O}$-operators and Rota-Baxter operators on BiHom-Poisson conformal algebra is provided. \end{abstract}
	\footnote{second author is the corresponding author.}
	\maketitle \section{Introduction}The conformal algebra is the growing mathematical structure yielding variety of important structures such as associative conformal algebra, (pre-)Lie conformal algebra, (tri-)dendriform conformal algebra, (Zinbiel)-Leibniz conformal algebra and (pre-)Poisson conformal algebras that are important in the development of mathematics and physics. This algebra was first arise in the study of vertex algebra in \cite{CK} where the conformal modules of infinite dimensional Lie algebras were discussed. It shows that conformal algebra is basically related to infinite dimensional algebra satisfying locality property. Later on, classification of irreducible modules over Virasoro conformal algebra was discussed by Wu and Yuan in \cite{WY}. Structure, derivations, representations and cohomology theory of above mentioned conformal algebras is discussed in \cite{ALHW, GZW2, H, LZ}. Recently, conformal algebras with respect to operator algebras such as Rota-Baxter operators and Nijenhuish operators are also been studied where the structure theory of conformal operator algebras and their cohomology theory is discussed (see \cite{AWW,Y}).
	\par The research on an algebra in the presence of structural map, called a twist map has been extensively explored in \cite{AW, LMS, LS} and termed as Hom-type algebra. This category comprising various algebras including Hom-associative algebra, Hom-Lie algebra etc. Makhlouf and its collaborators explored Hom-Lie algebra, Hom-associative algebra, Hom-dialgebras (see \cite{AM}). Whereas, Frieger and Gohr in \cite{FG}, provided the sufficient condition for the Hom-associative algebra to be associative in detail. However, Armakan and Razavi in \cite{AR}, provided the sufficient condition for the Hom-Lie algebra's context. This concept is studied with reference to conformal algebra in \cite{AWW, AYW, GDW}. Later on, it was discovered that an algebra in the presence of two structural maps is called a BiHom-type algebra, which has also been widely studied  in \cite{CQ, GMMP, GZW}. In these papers, the authors discussed the structure theory of  algebra, BiHom-Lie algebra and and BiHom-bialgebra. Study of BiHom-bialgebra includes the study of BiHom-algebra and BiHom-coalgebra.
	\par On the other hand, the Poisson algebra is combination of associative algebra and Lie algebra in the presence of Leibniz identity. This is very important mathematical structure that generalize three structures and have been widely studied by various researchers. Kosmann-Schwarzbach explored how to get Gerstenhaber algebras from the poisson algebras in \cite{KS}. Later on, Poisson algebra in BiHom setting was studied in \cite{AL, L, ML}. As an associative algebra yield dendriform algebra, where a Lie-algebra yields a pre-Lie algebra, in the same way combination of dendriform algebra and pre-Lie algebra in the presence of compatibile conditions give rise to a pre-Poisson algebra. This structure is also very important and studied by various researcher in \cite{A}. However, pre-Poisson algebra is also studied in Hom and BiHom-setting in \cite{ LI, LS}. Recently, the study of (pre-)Poisson conformal algebra and Hom-(pre-)Poisson conformal algebra have been conducted in \cite{CHM, KP, LZ, Ls} and many useful results have explored, i.e., in \cite{CHM}, Chtioui and his collaborators studied the quantization of pre-(Hom)Poisson conformal algebra, followed by the quantization of Poisson conformal algebra in \cite{LZ} by Liu and Zhou. However, in \cite{KP}, Kolensikov studied the universal enveloping of Poisson conformal algebra. Getting motivation from the above cited literature, we observed that despite the  extensive research in this field, still  there are many unanswered questions and research gaps. Therefore, structural theory of BiHom-Poisson conformal algebra and its related structures are deemed essential. This paper aims to address these gaps by presenting many important structure concerning BiHom-Poisson conformal algebras. It introduces the notion of BiHom-Poisson conformal algebra and BiHom-pre-Poisson conformal algebra and  constructs new BiHom-Poisson conformal algebra and BiHom-(pre-) Poisson conformal algebra from the existing ones, and describes the representation theory of BiHom-(pre-)Poisson conformal algebra. Moreover, it highlights the relationship of BiHom-(pre-)Poisson conformal algebra with $\mathcal{O}$-operator and Rota-Baxter operator.
	\\ Collectively, Our main goal is to show that how to move from BiHom-Poisson conformal algbera to the BiHom-pre-Poisson conformal algbera and vice versa.  \\
	\begin{enumerate}
		\item  First case can be illustrated by the following diagram:\\
		\begin{tikzcd}[column sep=large, row sep=large]
			\parbox{10em}{\centering \textit{BiHom-dendriform\\ conformal algebra}} + \parbox{10em}{\centering \textit{BiHom-preLie\\ conformal algebra}} \arrow[r, "", align=center] \arrow[d, swap, "{\begin{tabular}{@{}c@{}}$x \<_\l y + x\>_\l y$,\\ $x*_\l y- \a^{-1}\b(y) *_{-\p-\l} \a\b^{-1}(x)$\end{tabular}}"] & \parbox{10em}{\centering \textit{BiHom-pre-Poisson conformal algebra}} \arrow[d, "{\begin{tabular}{@{}c@{}}$x \<_\l y + x\>_\l y$,\\ $x*_\l y- \a^{-1}\b(y) *_{-\p-\l} \a\b^{-1}(x)$\end{tabular}}"] \\
			\parbox{10em}{\centering \textit{BiHom-associative\\ conformal algebra}} + \parbox{10em}{\centering \textit{BiHom-Lie\\ conformal algebra}} \arrow[r, swap, "", align=center] & \parbox{10em}{\centering \textit{BiHom-Poisson\\ conformal algebra}}
		\end{tikzcd}\\
		\item Second case/ reverse case can be illustrated in the following diagram:\\
		\begin{tikzcd}[column sep=large, row sep=large]
			\parbox{10em}{\centering \textit{BiHom-associative\\ conformal algebra}} + \parbox{10em}{\centering \textit{BiHom-Lie\\ conformal algebra}} \arrow{r}{} \arrow{d}[swap]{R(x)\cdot  _\l y, x\cdot_\l R(y), [R(x)_\l y]}&  \parbox{10em}{\centering \textit{BiHom-Poisson conformal algebra}} \arrow{d}{xR(x)\cdot_\l y, x\cdot_\l R(y), [R(x)_\l y]} \\
			\parbox{10em}{\centering \textit{BiHom-dendriform\\ conformal algebra}} + \parbox{10em}{\centering \textit{BiHom-preLie\\ conformal algebra}}\arrow{r}[swap]{} &  \parbox{10em}{\centering \textit{BiHom-pre-Poisson\\ conformal algebra}}
		\end{tikzcd}
	\end{enumerate}
	 \par Apart from the present section, this paper is divided into four additional  sections. In Section $2$, we provide some preliminary definitions and notions about  and BiHom- Lie conformal algebras to support our findings in the next sections. In Section $3$, we define BiHom-Poisson conformal algebra and constructed new structures out of it. We further show that tensor product of two BiHom-Poisson conformal algebra is a BiHom- Poisson conformal algebra. Further, we introduce the representation theory for the BiHom-Poisson conformal algebra. Moving on to Section $4$, we introduce the BiHom-pre-Poisson conformal algebra followed by BiHom-pre Lie conformal algebra and provide it's representation theory. Finally, in Section $5$, we show the relationship of algebras under consideration with the $\mathcal{O}$-operator and Rota-Baxter operator. 
		\section{Preliminaries}
	In this section we recall important notions about BiHom-associative conformal algebra and BiHom-Lie conformal algebra to define the BiHom-Poisson conformal algebra 
	\begin{defn}A BiHom-conformal algebra $B$ is a $\mathbb{C}[\p]$module, equipped with the $\mathbb{C}$- bilinear map $\cdot_\l:B\otimes B \to B[\l]$ in such a way that $x\otimes y\mapsto x_\l y$  and two structural maps $\a,\b:B\to B$ satisfying the following identities:
	\begin{equation*}
	(\p x) \cdot_\l y = -\l (x\cdot_\l y) ,\quad x\cdot_\l (\p y)=(\p+\l)( x\cdot_\l y) ,\quad \a\p=\p\a,\quad  \b\p=\p\b,\end{equation*} for all $x,y\in B$ and $\l,\m\in \mathbb{C}$.
	We denote it by $(B, \a,\b, \cdot_\l)$. 	
	\end{defn}
	\begin{defn}
		Let $(B, \a,\b, \cdot_\l)$ be a BiHom-conformal algebra, it is said to be BiHom-associative conformal algebra if the following associative condition holds for $ x,y,z\in B$ and $\l,\m \in \mathbb{C}$.
		\begin{equation*}
		\a(x)_\l(y_\m z)=(x_\l y)_{\l+\m}\b(z).
		\end{equation*}\end{defn}
	A BiHom-associative conformal algebra is said to be multiplicative if the following condition hold 
\begin{equation*}
\a(x_\l y)=\a(x)_\l \a(y),\quad \b(x_\l y)=\b(x)_\l \b(y)
\end{equation*}
	Moreover, a BiHom-associative conformal algebra is said to be regular, if the twist maps $\a,\b$ are invertible. A BiHom-associative conformal algebra is called commutative if \begin{equation*}
	x\cdot_\l y= y\cdot_{\p-\l}x.
	\end{equation*}
	\begin{ex}
	Consider that $(B,*_\l)$ is an associative conformal algebra, Let's define two structure maps $\a,\b : B \to B$  on the associative conformal algebra, such that $ \a(x*_\l y)=\a(x)*_\l \a(y) , \b(x*_\l y)=\b(x)*_\l \b(y), $ for all $x, y\in B$ and $\l,\m\in \mathbb{C}$. Then $(B,\a(.*_\l.), \b(.*_\l.),\a,\b)$ yield a multiplicative BiHom-associative conformal algebra. \end{ex} Define the $m$-th product on a BiHom-associative conformal algebra $ B $ for $m\in \mathbb{N}$ by $a_\l b=\sum_{m=1}^{\infty} \l^m (x_{(m)}y)$ in such a way that $\a\p=\p\a$, $\b\p=\p\b$.  Note that, we can translate all the BiHom-associative conformal algebra' conditions in terms of $m-th$ products as follows:\begin{enumerate}
	\item  $(\p x)_{(m)}y= -m(x_{(m-1)}y)$,\item $(x)_{(m)}(\p y) = \p(x_{(m-1)}y) + m(x_{(m-1)} y)$ and\item $ \sum_{i=0}^{n} {\binom{n}{i}}(x_{(i)}y)_{(m+n- i)}\b(z) - \a(x)_{(n)}(y_{(m)}z) = 0. $
\end{enumerate}
	 Same as BiHom-associative conformal algebra, we define BiHom-Lie conformal algebra as follows:
	\begin{defn}
		A BiHom-Lie conformal algebra $B$ is a $\mathbb{C}[\p]$-module equipped with the $\mathbb{C}$-bilinear map $[\cdot_\l\cdot]:B\otimes B \to B[\l]$ in such a way that $x \otimes y\mapsto [x_\l y]$, and two structural maps $\a,\b$ satisfying the following identities:
		\begin{eqnarray} &\a\p=\p\a,~~ \b\p=\p\b \\&
		[(\p x) _\l y] = -\l[ x_\l y] ,~ [x_\l (\p y)]=(\p+\l) [x_\l y] ,\\&[\b(x)_\l \a (y)] = - [\b (y)_{-\p-\l} \a(x)] ,\\&
		[\b^2(x)_\l [\b (y)_\m \a (z)]] + [\b^2 (y)_\m [\b (z)_{-\p-\l}\a(x)]] + [\b^2(z)_{-\p-\l-\m} [\b(x)_\l \a (y)]] = 0.
	\\&\nonumber or \\ &\nonumber[\a\b(x)_\l[y_\m z]] = [[\b(x)_\l y]_{\l+\m}\b(z)] + [\b(y)_\m[\a(x)_\l z]].
		\end{eqnarray}for all $x,y,z\in B$ and $\l,\m\in \mathbb{C}$. Note that $3rd$ identity is called as  conformal BiHom-skew-symmetry while the $4th$ one is called as conformal BiHom-Jacobi identity.
	\end{defn} Similar to the BiHom-assocative conformal algebra, we can define the  $mth$-product of a BiHom-Lie conformal algebra.
	A finite BiHom-Lie conformal algebra is considered finitely generated as a $\mathbb{C}[\p]$-module. The rank of a BiHom-Lie conformal algebra $B$ is refers to its rank as a $\mathbb{C}[\p]$-module. A BiHom-Lie conformal algebra $(B, [._{\l} .], \a,\b)$ is called as regular if twist maps $\a$ and $\b$ are both invertible. 
	\begin{ex}	Let $(B,[\cdot_\l\cdot])$ be an Lie conformal algebra,  Lets define two structure maps $\a,\b$  on $B$, such that \begin{equation*}  \a([x_\l y])=[\a(x)_\l \a(y)] ,\quad  \b([x_\l y])=[\b(x)_\l \b(y)], \textit { for all }   x, y\in B,\quad \l,\m\in \mathbb{C}.
		\end{equation*} Then the tuple $(B,\a([._\l.]), \b([._\l.]), \a, \b)$ yield a multiplicative BiHom-Lie conformal algebra.
	\end{ex} Let $(B,\a,\b, *_\l)$ be a BiHom-associative conformal algebra then $\l$-bracket , defined by $$[x_\l y]=x*_\l y-y*_{-\p-\l} x$$ yields us a BiHom-Lie conformal algebra, denoted by $(B,\a,\b, [\cdot_\l\cdot])$ .
	\begin{defn}\label{def2.6}Let $(B, *_\l, \a, \b)$ be a  conformal algebra, and let $(M, \phi, \psi)$ be a $\mathbb{C}[\p]$- module equipped with the $\mathbb{C}$- linear maps $\phi, \psi: M\to M$ satisfying  $\phi\p=\p\phi$, $\psi\p=\p\psi$.  Let $l, r : B\to  gc(M)$, be  two $\mathbb{C}-$ bilinear maps. The quadruple $(M,l,r,\phi,\psi)$ is called a conformal representation of $B$, if th following equations hold
		\begin{eqnarray}
	 l(\p x)_\l m&= -\l(l(x)_\l  m),\\
		l(x)_\l(\p m) &= (\p+\l)(l(x)_\l m),\\
		r(\p x)_\l m &= -\l(r(x)_\l m),\\
		r(x)_\l(\p m) &= (\p+\l)(r(x)_\l m),\\
		\phi(l(x)_\l m) &= l(\a(x))_\l\phi(m),\\
		\phi(r(x)_\l m) &= r(\a(x))_\l \phi(m),\\
		\psi(l(x)_\l m) &= l(\b(x))_\l \psi(m),\\
			\psi(r(x)_\l m) &= r(\b(x))_\l \psi(m),\\ 
		l(x *_\l y)_{\l+\m}\psi(m) &= l(\a(x))_\l (l(y)_\m m), \\
		r(x *_\l y)_{\l+\m}\phi(m) &= r(\b(y))_\l (r(x)_\m m), \\
l(\a(x))_\l (r(y)_\m m) &= r(\b(y))_\m (l(x)_\l m), 
		\end{eqnarray}
		for all $x, y \in B$, $m \in M $ and $\l,\m\in \mathbb{C}$. Note that  $l(x)_\l m = x *_\l m$ and $ r(x)_\l m = m*_{-\p-\l} x. $ 
		
	\end{defn}
	\begin{prop}
		Let $(l , \phi, \psi, M)$ be a conformal-representation of a  conformal algebra $(B, *_\l, \a, \b)$, where $M$ is a $ \mathbb{C}[\p] $- module, $\phi,\psi$ are  $ \mathbb{C} $-linear maps, satisfying $\p\phi=\phi\p$ and $\p\psi=\psi\p$. Then, the direct sum $B\oplus M$ of vector spaces is turned into a  conformal algebra by defining the twist maps $\a+\phi, \b+\psi$ and $\l$-multiplication $*'_\l$ in $B \oplus M$ as follows
		\begin{equation}
		\begin{aligned}(x_1 +m_1)*'_\l(x_2 + m_2) &:= x_1 *_\l x_2 + (l(x_1)_\l{ m_2} + r(x_2)_{-\p-\l} m_1),\\
		(\a \oplus \phi)(x  + m) &:= \a(x ) + \phi(m),\\
		(\b \oplus \psi)(x  + m) &:= \b(x) + \psi(m ),
		\end{aligned}
		\end{equation}for all $x,x_1, x_2 \in B$, $m, m_1, m_2 \in M$ and $\l,\m\in \mathbb{C}$.
	\end{prop}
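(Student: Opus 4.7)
The plan is to show that the quadruple $(B \oplus M, *'_\l, \a \oplus \phi, \b \oplus \psi)$ satisfies the three clauses that make up a BiHom-associative conformal algebra: the $\mathbb{C}[\p]$-module compatibility, the commutation of the twist maps with $\p$, and the BiHom-associative identity. Since the operations are defined componentwise on the $B$-slot and depend on $(l,r,\phi,\psi)$ on the $M$-slot, every verification reduces to a pair of calculations: one in $B$, which holds because $(B,*_\l,\a,\b)$ is BiHom-associative, and one involving $M$, which uses the defining relations (1)--(11) of Definition~\ref{def2.6}.

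First I would check the $\p$-sesquilinearity of $*'_\l$ and the identities $(\a\oplus\phi)\p = \p(\a\oplus\phi)$, $(\b\oplus\psi)\p = \p(\b\oplus\psi)$. The twist-map compatibility with $\p$ is immediate from the assumptions $\a\p=\p\a$, $\b\p=\p\b$, $\phi\p=\p\phi$, $\psi\p=\p\psi$. For sesquilinearity, expanding $\p(x_1+m_1)*'_\l(x_2+m_2)$ splits into the $B$-part $(\p x_1)*_\l x_2 = -\l(x_1*_\l x_2)$ and the $M$-part $l(\p x_1)_\l m_2 + r(x_2)_{-\p-\l}(\p m_1)$; each piece is handled by the corresponding relation in $B$ or by equations (1)--(4) of Definition~\ref{def2.6}, and the right-hand sesquilinearity is analogous.

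The core of the argument is the verification of
\begin{equation*}
(\a\oplus\phi)(x_1+m_1) *'_\l \bigl((x_2+m_2) *'_\m (x_3+m_3)\bigr) = \bigl((x_1+m_1)*'_\l(x_2+m_2)\bigr)*'_{\l+\m} (\b\oplus\psi)(x_3+m_3).
\end{equation*}
Expanding both sides, the $B$-components yield exactly $\a(x_1)*_\l(x_2*_\m x_3) = (x_1*_\l x_2)*_{\l+\m}\b(x_3)$, which is BiHom-associativity in $B$. The $M$-components split into three groups according to which of $m_1,m_2,m_3$ they involve: the term containing only $m_3$ reduces to identity (9), $l(x_1*_\l x_2)_{\l+\m}\psi(m_3)=l(\a(x_1))_\l(l(x_2)_\m m_3)$; the term containing only $m_1$ reduces to identity (10) $r(x_2*_\m x_3)_{-\p-\l}\phi(m_1)=r(\b(x_3))_{-\p-\l-\m}(r(x_2)_{-\p-\l}m_1)$ after a shift of spectral parameters; and the term containing only $m_2$ matches identity (11), $l(\a(x_1))_\l(r(x_3)_{-\p-\m}m_2)=r(\b(x_3))_{-\p-\l-\m}(l(x_1)_\l m_2)$.

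The main technical obstacle I expect is bookkeeping the spectral parameters $\l,\m,-\p-\l,-\p-\l-\m$ correctly, because the terms involving $r$ carry shifted labels and the three mixed equations (9)--(11) must be applied with consistent variable renamings; a careful componentwise expansion and term-by-term matching, together with the $\p$-sesquilinearity verified in the first step, then completes the proof.
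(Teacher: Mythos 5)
Your proposal is correct, and in fact the paper states this proposition without supplying any proof at all, so there is nothing to compare it against; your componentwise verification is the standard (and surely intended) argument. The decomposition of the BiHom-associativity check into four groups --- the $B$-component handled by associativity of $(B,*_\l,\a,\b)$, and the $m_3$-, $m_2$-, and $m_1$-components matched respectively against identities (9), (11), and (10) of Definition~\ref{def2.6} --- is exactly what those three mixed module axioms are designed for, and your remark that the only delicate point is the bookkeeping of the shifted spectral parameters $-\p-\l$, $-\p-\l-\m$ in the $r$-terms is accurate. One minor point worth noting: the proposition's hypothesis lists the representation as $(l,\phi,\psi,M)$ but the construction (and your proof) necessarily uses $r$ as well, so the data should read $(l,r,\phi,\psi,M)$ as in Definition~\ref{def2.6}; this is a typo in the statement rather than a gap in your argument.
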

	We denote this  conformal algebra by $(B\oplus M, *'_\l, \a+\phi, \b+\psi)$, or simply $(B\ltimes_{l,r,\a,\b,\phi,\psi}M)$. 
	
	\begin{defn}\label{defrepBLA}A conformal representation of a BiHom-Lie conformal algebra $(B, [\cdot_\l\cdot ], \a,\b)$ on a $\mathbb{C}[\p]$-module $M$ equipped with a $\mathbb{C}$-linear maps $\phi,\psi: M \to M$ satisfying $\phi\p=\p\phi$, $\psi\p=\p\psi$ is a $ \mathbb{C}$-linear map $ \rho : B \to gc(M) $, such that the following equalities are satisfied:\begin{equation*}\begin{aligned}\rho(\p x)_\l m &= -\l(\rho(x)_\l  m), \\
		\rho(x)_\l(\p m) &= (\p+\l)(\rho(x)_\l m),\\
		\phi(\rho(x)_\l m) &= \rho(\a(x))_\l \phi(m),\\
		\psi(\rho(x)_\l m) &= \rho(\b(x))_\l \psi(m),\\
		\rho([\b(x)_\l y])_{\l+\m}\psi(m) &= \rho(\a\b(x))_\l \rho(y)_{\m}m -\rho(\b(y))_{\m} (\rho(\a(x))_{\l}m).\end{aligned}\end{equation*}for all $x,y\in B$, $m \in M$ and $\l,\m\in \mathbb{C}$.\end{defn}The conformal representation of the BiHom-Lie conformal algebra is denoted by $(\rho , \phi, \psi, M)$.
	\begin{prop}Let $(\rho , \phi, \psi, M)$ be a representation of a BiHom-Lie conformal algebra $(B, [\cdot_\l\cdot], \a, \b)$, where $M$ is a $ \mathbb{C}[\p] $-module, $\phi,\psi$ are  $ \mathbb{C} $-linear maps, satisfying $\p\phi=\phi\p$ and $\p\psi=\psi\p$. Moreover, assume that $\a,\psi $ are bijective maps. Then, the direct sum $B\oplus M$ of vector spaces is turned into a BiHom-Lie conformal algebra by  the $\l$-bracket $[\cdot_\l\cdot]_\rho$ in $B \oplus M$ defined as follows
		\begin{equation}
		\begin{aligned}[(x_1 +m_1)_\l(x_2 + m_2)]_{\rho} &:= [{x_1} _\l x_2] + ({\rho(x_1)}_\l m_2 - {\rho(\a^{-1}\b (x_2))}_{-\p-\l}\phi\psi^{-1}( m_1)),\\
	(\a \oplus \phi)(x  + m) &:= \a(x ) + \phi(m),\\
	(\b \oplus \psi)(x  + m) &:= \b(x) + \psi(m ),
		\end{aligned}
		\end{equation}for all $x,x_1, x_2 \in B,m, m_1, m_2 \in M$.
	\end{prop}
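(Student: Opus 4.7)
The plan is to verify the four axioms of a BiHom-Lie conformal algebra for the quadruple $(B\oplus M,[\cdot_\l\cdot]_\rho,\a\oplus\phi,\b\oplus\psi)$: compatibility of the twist maps with $\p$, $\p$-sesquilinearity of $[\cdot_\l\cdot]_\rho$, BiHom-skew-symmetry, and the BiHom-Jacobi identity. Throughout I would use the bijectivity of $\a$ and $\psi$ (needed to make sense of $\a^{-1}\b$ and $\phi\psi^{-1}$) together with the commutation relations $\a\b=\b\a$ on $B$ and $\phi\psi=\psi\phi$ on $M$ that are standard in the BiHom setting; these ensure that compositions such as $\a^{-1}\b\circ\a$ and $\phi\psi^{-1}\circ\psi$ collapse to $\b$ and $\phi$ respectively.

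First I would dispatch the two easy axioms. Compatibility of the twist maps with $\p$ is immediate from $\a\p=\p\a$, $\b\p=\p\b$, $\phi\p=\p\phi$, and $\psi\p=\p\psi$. The $\p$-sesquilinearity of $[\cdot_\l\cdot]_\rho$ follows termwise from the sesquilinearity of $[\cdot_\l\cdot]$ on $B$ together with the identities $\rho(\p x)_\l m=-\l\,\rho(x)_\l m$ and $\rho(x)_\l(\p m)=(\p+\l)\,\rho(x)_\l m$ from Definition \ref{defrepBLA}; the only care needed is that $\p$ passes through $\a^{-1}\b$ and $\phi\psi^{-1}$, which it does thanks to the commutation of each individual twist with $\p$ and the invertibility assumption.

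For BiHom-skew-symmetry I would expand both sides of the target identity $[(\b\oplus\psi)(X_1)_\l(\a\oplus\phi)(X_2)]_\rho=-[(\b\oplus\psi)(X_2)_{-\p-\l}(\a\oplus\phi)(X_1)]_\rho$ with $X_i=x_i+m_i$, and match components. The $B$-parts agree by the BiHom-skew-symmetry of $[\cdot_\l\cdot]$. For the $M$-parts I would simplify $\rho(\a^{-1}\b(\a(x_2)))_{-\p-\l}\phi\psi^{-1}(\psi(m_1))=\rho(\b(x_2))_{-\p-\l}\phi(m_1)$ on the left and the symmetric expression on the right using the commutations listed above, after which the two sides coincide termwise.

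The main obstacle is the BiHom-Jacobi identity. Linearity of $[\cdot_\l\cdot]_\rho$ together with the fact that $[m_i\,_\l m_j]_\rho=0$ decomposes the cyclic sum into four pieces: the purely $B$-valued piece, which vanishes by the Jacobi identity of $(B,[\cdot_\l\cdot],\a,\b)$, and three pieces each linear in exactly one of the $m_i$. After carefully tracking the arguments $\a^{-1}\b(\cdot)$, $\phi\psi^{-1}(\cdot)$, and the shifted spectral parameters $-\p-\l$, $-\p-\m$, $-\p-\l-\m$ introduced by the cyclic permutation, each mixed piece reduces to an instance of the representation identity $\rho([\b(x)_\l y])_{\l+\m}\psi(m)=\rho(\a\b(x))_\l\rho(y)_\m m-\rho(\b(y))_\m\rho(\a(x))_\l m$ from Definition \ref{defrepBLA}. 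The technical core is index bookkeeping: repeatedly invoking $\a\b=\b\a$, $\phi\psi=\psi\phi$, and the bijectivity of $\a$ and $\psi$ to align the arguments so that the representation axiom applies verbatim, at which point each mixed piece vanishes and the Jacobi identity follows.
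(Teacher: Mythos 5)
The paper itself gives no proof of this proposition: it is stated in the preliminaries without argument, and when the construction is reused later the authors simply defer to Proposition~3.1 of \cite{C}. So your proposal cannot be compared to an in-paper proof; judged on its own, it follows the standard (and essentially the only) route, namely direct verification of the four axioms for $(B\oplus M,[\cdot_\l\cdot]_\rho,\a\oplus\phi,\b\oplus\psi)$, and the outline is sound. Your treatment of sesquilinearity and of skew-symmetry is correct as described; in the skew-symmetry check the key cancellations $\a^{-1}\b(\a(x))=\b(x)$ and $\phi\psi^{-1}(\psi(m))=\phi(m)$ are exactly the right ones, and the $B$-components match by skew-symmetry of $[\cdot_\l\cdot]$.

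One concrete caution on the part you label ``index bookkeeping.'' In the Jacobi identity written in the Leibniz form $[\a\b(X)_\l[Y_\m Z]_\rho]_\rho=[[\b(X)_\l Y]_\rho\,{}_{\l+\m}\,\b(Z)]_\rho+[\b(Y)_\m[\a(X)_\l Z]_\rho]_\rho$, only the piece linear in $m_3$ is \emph{verbatim} the representation axiom $\rho([\b(x)_\l y])_{\l+\m}\psi(m)=\rho(\a\b(x))_\l\rho(y)_\m m-\rho(\b(y))_\m\rho(\a(x))_\l m$. The pieces linear in $m_1$ and $m_2$ involve iterated occurrences of $\rho(\a^{-1}\b(\cdot))_{-\p-\bullet}\phi\psi^{-1}$, and to reduce them to the axiom you must conjugate it through $\phi\psi^{-1}$ using the intertwining relations $\phi\circ\rho(x)_\l=\rho(\a(x))_\l\circ\phi$ and $\psi\circ\rho(x)_\l=\rho(\b(x))_\l\circ\psi$ and re-index the algebra arguments by $\a^{-1}\b$. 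As stated, the hypotheses only give invertibility of $\a$ and $\psi$; when you push $\psi^{-1}$ (or $\phi$) past a $\rho(y)_\nu$ you should check that $y$ is already of the form $\b(\cdot)$ (respectively that you never need $\phi^{-1}$), or else group the terms so that only the granted inverses are used. This is where the argument could silently fail if done carelessly, so the final write-up should display at least one of the $m_1$- or $m_2$-linear reductions in full rather than asserting it is analogous to the $m_3$ case.
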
We denote this BiHom-Lie conformal algebra by $(B\oplus M, [\cdot_\l\cdot]_\rho, \a+\phi, \b+\psi)$, or simply $(B\ltimes_{\rho,\a,\b,\phi,\psi}M)$. 
	\begin{ex}Consider that $ad:B\to gc(B)$ is a linear map defined by $ ad(x)_\l y=[x_\l y] $ for all $x, y\in B $. Then $(B,\a\,\b, ad)$ is the conformal representation of th BiHom-Lie conformal algebra. It is also known as adjoint representation of $B$.
	\end{ex} Now, we can define the BiHom analog of Poisson conformal algebra in the following section.
\section{BiHom-Poisson conformal algebra and its representations}
\begin{defn}\label{defBPA}A BiHom-Poisson conformal algebra $(B,*_\l,\a,\b,[\cdot_\l\cdot])$ is a tuple consisting of $\mathbb{C}[\p]$-module $B$, two $\mathbb{C}$-bilinear maps $*_\l, [\cdot_\l\cdot]: B\otimes B \to B$, two linear maps $\a,\b : B \to B$, such that the following identities hold:
		\begin{enumerate}
			\item $(B,*_\l, \a, \b)$ is a commutative  conformal algebra.
			\item $(B, [\cdot_\l\cdot], \a, \b)$ is a BiHom-Lie conformal algebra.
			\item The BiHom-Leibniz identity
		\begin{equation}\label{liebniz}
		[\a\b(x)_\l ( y*_\m z) ] = ([\b(x)_\l y])*_{\l+\m} \b(z) + \b(y)*_\m ([\a(x)_\l z]),
		\end{equation} for all $ x,y,z \in B$ and $\l,\m \in \mathbb{C}$.
		\end{enumerate}
		In a BiHom-Poisson algebra $(B, [\cdot_\l\cdot], *_{\l}, \a, \b)$, the operations $*_\l$ and $[\cdot_\l\cdot]$ are called the  conformal  product and the BiHom-Poisson conformal bracket, respectively.\end{defn}When $\a=\b$, we get Hom-Poisson conformal algebra, however, for $\a=\b=id$ , we simply get a Poisson conformal algebra.
	
	\begin{defn}
		Let $(B, [._\l.], *_\l, \a,\b)$ be a BiHom-Poisson conformal algebra. If $Z(B) = \{x \in B| [x_\l y]= x\cdot_\l y = 0, \forall x,y \in B\}$, then $Z(B)$ is called the centralizer of $B$.
	\end{defn}
\begin{rem}Let $(B, [\cdot_\l\cdot],*_\l, \a,\b)$ a BiHom-Poisson conformal algebra, then \begin{enumerate}
			\item $(B, [\cdot_\l\cdot],*_\l, \a,\b)$ is multiplicative if
			\begin{equation*}
			\begin{aligned} 
			&\a[x_\l y]= [\a(x)_\l\a(y)],~~\b[x_\l y]= [\b(x)_\l\b(y)]
		~~and ~~\\
			&\a(x*_\l y)=\a(x)*_\l \a(y) ,~~ \b(x*_\l y)=\b(x)*_\l \b(y).
			\end{aligned}
			\end{equation*}
			\item $(B, [\cdot_\l\cdot],*_\l, \a,\b)$  is said to be regular if $\a$ and $ \b $ are bijective maps.
			\item  $(B, [\cdot_\l\cdot],*_\l , \a,\b)$  is said to be involutive if $\a$ and $\b$ satisfiy $\a^2=id=\b^2.$
			\item Let $(B', [\cdot_\l\cdot]',*'_\l, \a',\b')$  be another BiHom-Poisson conformal algebra. A morphism $f: B\to B'$ is a linear map such that following conditions are satisfied:
		$$ f[x_\l y] = [f(x)_\l f(y)]' ~~
			and 
			~~ f (x * _\l y)  = f(x)*'_\l f(y) ~~
			and  \a f=f\a'~~, ~~\b f= f\b' $$
		\end{enumerate}
	\end{rem}
	\begin{prop} 
		 Let $(B, *_\l, \a,\b)$ a  conformal algebra. Then $B' = (B, [\cdot_\l\cdot], *_\l,\a,\b)$ is a regular BiHom-Poisson conformal algebra. Where $[x_\l y] = x *_\l y - \a^{-1}\b(y)*_{-\p-\l} \a\b^{-1}(x),$ for all $x, y\in B$ and $\l\in \mathbb{C}$.
	\end{prop}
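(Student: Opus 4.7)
The plan is to verify, in order, the three defining conditions of Definition \ref{defBPA}: commutativity of the conformal product, the BiHom-Lie axioms for the new bracket, and the BiHom-Leibniz identity linking them. Since the formula for $[x_\l y]$ involves $\a^{-1}$ and $\b^{-1}$, the starting object must implicitly be a regular commutative BiHom-associative conformal algebra (multiplicativity is also tacitly required in order to push $\a$ and $\b$ through $*_\l$). Under this hypothesis condition (1) is part of the data, so the work reduces to conditions (2) and (3).

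For condition (2) I would verify each axiom of a BiHom-Lie conformal algebra for $[x_\l y]=x*_\l y-\a^{-1}\b(y)*_{-\p-\l}\a\b^{-1}(x)$. The sesquilinearity $[(\p x)_\l y]=-\l[x_\l y]$ and $[x_\l(\p y)]=(\p+\l)[x_\l y]$ follow at once from sesquilinearity of $*_\l$ together with $\a\p=\p\a$, $\b\p=\p\b$. The conformal BiHom-skew-symmetry is obtained by direct substitution: expanding $[\b(x)_\l\a(y)]$ gives $\b(x)*_\l\a(y)-\b(y)*_{-\p-\l}\a(x)$, since $\a^{-1}\b(\a(y))=\b(y)$ and $\a\b^{-1}(\b(x))=\a(x)$, and the same expression arises from $-[\b(y)_{-\p-\l}\a(x)]$. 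The conformal BiHom-Jacobi identity is the most substantial step: I would expand each of the three cyclic nested brackets into four summands via the definition of $[\cdot_\l\cdot]$, use multiplicativity of $\a,\b$ to push the twist maps through inner products, and then apply BiHom-associativity together with commutativity $u*_\l v=v*_{-\p-\l}u$ to rewrite every term into a canonical form; the twelve resulting terms should pair off and cancel.

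For condition (3) I would expand both sides of the BiHom-Leibniz identity $[\a\b(x)_\l(y*_\m z)]=[\b(x)_\l y]*_{\l+\m}\b(z)+\b(y)*_\m[\a(x)_\l z]$ by means of the bracket formula, obtaining two terms on the left and four on the right, each of the shape $u*_{-}(v*_{-}w)$ or $(u*_{-}v)*_{-}\b(w)$. Applying BiHom-associativity $\a(u)*_\l(v*_\m w)=(u*_\l v)*_{\l+\m}\b(w)$ after using commutativity to swap factors when needed should bring all six terms into a common form and match them up.

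The main obstacle will be the algebraic bookkeeping of the twist maps. To apply BiHom-associativity to a factor like $\a^{-1}\b(y)*_{-\p-\l}(\a\b^{-1}(x)*_\m w)$ one has to rewrite $\a^{-1}\b(y)$ as $\a(\a^{-2}\b(y))$, which is only possible because $\a$ is invertible; analogous rewrites are needed throughout, and this is precisely where regularity of $\a,\b$ enters. Commutativity of $*_\l$ is used repeatedly to reorder factors into the form required by the BiHom-associativity axiom, and multiplicativity is used to distribute $\a,\b$ across $*_\l$. No conceptually new ingredient is required beyond these three properties, but careful tracking of which variable carries $\a$, $\b$, $\a^{-1}\b$ or $\a\b^{-1}$ constitutes essentially the entire content of the verification.
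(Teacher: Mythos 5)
The paper states this proposition without any proof, so there is nothing to compare against line by line; your plan is the standard verification and is sound. You are also right to flag what the statement leaves tacit: the hypothesis must be a \emph{regular, multiplicative, commutative} BiHom-associative conformal algebra with $\a\b=\b\a$ (the latter is what makes $\a^{-1}\b(\a(y))=\b(y)$ and hence skew-symmetry work), since otherwise the bracket formula is not even well defined and the twist maps cannot be pushed through $*_\l$ in the Jacobi and Leibniz computations. The term count and cancellation scheme you describe (twelve terms for Jacobi, two against four for Leibniz after rewriting via commutativity and BiHom-associativity) is exactly what the omitted computation amounts to.
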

\begin{prop}
	Let $(B , *_{\l}, [\cdot_\l\cdot], \alpha, \beta)$ be a BiHom-Poisson conformal algebra.
	Then $(B , *_{\l}^n=\a^n\circ *_\l, [\cdot_\l\cdot]^n=\a^n[\cdot_\l\cdot], \alpha^{n+1}, \a^n\beta)$ is a BiHom-Poisson conformal algebra.
\end{prop}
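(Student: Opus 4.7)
The plan is to verify the three defining axioms of Definition~\ref{defBPA} for the twisted datum $(B,*_\l^n,[\cdot_\l\cdot]^n,\a^{n+1},\a^n\b)$, where $x*_\l^n y=\a^n(x*_\l y)$ and $[x_\l y]^n=\a^n[x_\l y]$. Throughout I would use the multiplicativity of $\a$ and $\b$ with respect to both the product and the bracket, together with $\a\b=\b\a$ and $\a\p=\p\a$, $\b\p=\p\b$, all of which are part of the ambient BiHom-Poisson structure.

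First I would handle the commutative BiHom-associative part. Commutativity is immediate:
\[ x*_\l^n y=\a^n(x*_\l y)=\a^n(y*_{-\p-\l}x)=y*_{-\p-\l}^n x. \]
For BiHom-associativity with the new twist maps $\a^{n+1}$ and $\a^n\b$, I expand both $\a^{n+1}(x)*_\l^n(y*_\m^n z)$ and $(x*_\l^n y)*_{\l+\m}^n \a^n\b(z)$, use multiplicativity to bring every $\a^n$ outside, and reduce the equation to the original BiHom-associativity applied to $\a^n(x),\a^n(y),\a^n(z)$, using $\a^{n+1}(x)=\a(\a^n(x))$ and $\a^n\b(z)=\b(\a^n(z))$.

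Next I would verify the BiHom-Lie axioms for $(B,[\cdot_\l\cdot]^n,\a^{n+1},\a^n\b)$. The sesquilinearity conditions pass through because $\a$ commutes with $\p$. Skew-symmetry follows from
\[ [\a^n\b(x)_\l\a^{n+1}(y)]^n=\a^n[\b(\a^n(x))_\l\a(\a^n(y))] \]
by applying the original BiHom-skew-symmetry at $\a^n(x),\a^n(y)$ and re-factoring. The BiHom-Jacobi identity is analogous: setting $X=\a^{2n}(x)$, $Y=\a^{2n}(y)$, $Z=\a^{2n}(z)$, the three cyclic terms equal $\a^n$ times the original Jacobi sum evaluated at $(X,Y,Z)$, which vanishes.

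Finally I check the BiHom-Leibniz identity \eqref{liebniz}. The left-hand side becomes $\a^n[\a^{2n+1}\b(x)_\l(\a^n(y)*_\m\a^n(z))]$; writing $\a^{2n+1}\b(x)=\a\b(\a^{2n}(x))$ and applying the original Leibniz identity to the triple $(\a^{2n}(x),\a^n(y),\a^n(z))$ produces the two right-hand side summands, which after pushing $\a^n$ back inside via multiplicativity match $[\a^n\b(x)_\l y]^n*_{\l+\m}^n\a^n\b(z)+\a^n\b(y)*_\m^n[\a^{n+1}(x)_\l z]^n$. The whole argument is the standard Yau-type twist transported to the BiHom-conformal setting; the only real obstacle is bookkeeping the $\a$-exponents so that the multiplicativity rewrites align exactly with the prescribed new twist maps $\a^{n+1}$ and $\a^n\b$.
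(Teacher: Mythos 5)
Your proposal is correct and takes essentially the same route as the paper's proof: unwind the twisted operations, use multiplicativity of $\a$ and $\b$ to redistribute the powers of $\a$, and reduce each axiom (sesquilinearity, commutativity/skew-symmetry, associativity, Jacobi, Leibniz) to the corresponding axiom of the original algebra evaluated at suitably $\a$-shifted arguments. The only caveat, which applies equally to the paper's own argument, is that these rewrites require the ambient structure to be multiplicative, a hypothesis not stated explicitly in the proposition.
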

\begin{proof}
	\begin{enumerate}
	\item Here we show that $(B , *_{\l}^n=\a^n\circ *_\l, \alpha^{n+1}, \alpha^n\beta)$ is a  conformal algebra, for this consider 
	\begin{itemize}
		\item Conformal sesqui-linearity:\begin{align*}
		\p(x)*_\l^n y=& \a^n(\p(x)*_\l y)= \a^n(-\l(x*_\l y))\\&= -\l\a^n(x*_\l y)=-\l(x*_\l^n y)
		\end{align*}
		\begin{align*}
		x*_\l^n \p(y)=& \a^n(x*_\l \p(y))= \a^n((\l+\p)(x*_\l y))\\&= (\l+\p)\a^n(x*_\l y)=(\l+\p)(x*_\l^n y)
		\end{align*} 
		\item BiHom-skew symmetry:
		\begin{align*}
		\a^n\b(x)*_\l^n \a^{n+1}(y)&=\a^n(\a^n\b(x)*_\l \a^{n+1}(y))=\a^{2n}(\b(x)*_\l \a(y))\\&=-\a^{2n}(\b(y)*_{-\p-\l}\a(x))=-\a^{n}(\a^n\b(y)*_{-\p-\l}\a^{n+1}(x))=-(\a^n\b(y)*^{n}_{-\p-\l}\a^{n+1}(x))
		\end{align*}
		\item Conformal  identity:
		\begin{align*}
		&\a^{n+1}(x)*_\l^n(y*_\m^n z)-(x*_\l^ny)*_{\l+\m}^n \a^n\b(z)\\&=  
		\a^n(\a^{n+1}(x)*_\l\a^n(y*_\m z))-\a^n(\a^n(x*_\l y)*_{\l+\m} \a^n\b(z))\\&=  
		\a^{2n+1}(x)*_\l\a^{2n}(y*_\m z)-\a^{2n}(x*_\l y)*_{\l+\m} \a^{2n}\b(z)\\&=  
		\a^{2n}(\a (x)*_\l(y*_\m z)-(x*_\l y)*_{\l+\m}\b(z))\\&  
		=0.
		\end{align*}
	\end{itemize}
	Thus, $(B , *_{\l}^n=\a^n\circ *_\l, \alpha^{n+1}, \alpha^n\beta)$ is a  conformal algebra.
	\item Similarly, we can show that $(B , [._\l. ]^n=\a^n[._\l. ], \alpha^{n+1}, \alpha^n\beta)$ is a BiHom-Lie conformal algebra.
	 \item  Now we only left to prove BiHom-Leibniz conformal identity as follows:
	 \begin{align*}&([\a^n\b(x)_\l y]^n)*_{\l+\m}^n \a^n\b(z) + \a^n\b(y)*_{\m}^n ([\a^{n+1}(x)_\l z]^n)\\&= \a^n((\a^n[\a^n\b(x)_\l y])*_{\l+\m} \a^n\b(z)) + \a^n(\a^n\b(y)*_{\m}  \a^n([\a^{n+1}(x)_\l z])), \\&= \a^{2n}([\a^n\b(x)_\l y]*_{\l+\m} \b(z)) + \a^{2n}(\b(y)*_{\m}  [\a^{n+1}(x)_\l z]),\\&= \a^{2n}([\a^n\b(x)_\l y]*_{\l+\m} \b(z) + \b(y)*_{\m}  [\a^{n+1}(x)_\l z]),
	 \\&= \a^{2n}([\a \b(\a^n(x))_\l (y *_{\m}  z)])\\&=\a^{2n}[\a^{n+1}\b(x)_\l ( y*_\m z) ] \\&=\a^n[\a^{2n+1}\b(x)_\l \a^n( y*_\m z) ] \\&=[\a^{n+1}\a^n\b(x)_\l ( y*_\m^n z) ]^n
	 \end{align*} 
 \end{enumerate}It completes the proof.
	\end{proof} In the following proposition, we show the relation between Rota-Baxter operator and BiHom-Poisson conformal algebra.
\begin{prop}
	Let $(B , *_{\l}, [._\l. ], \alpha, \beta)$ be a BiHom-Poisson conformal algebra and $R$ is a Rota-Baxter operator. Define two new $\l$-multiplication maps $*'_\l$ and $ [._\l.]' $ with
	$x *'_\l y = R(x)*_\l y + x*_\l R(y),
	[x_\l y]' = [R(x)_\l y] + [x_\l R(y)].$
	Then $(B , *'_{\l}, [\cdot_\l\cdot]', \alpha, \beta)$ is a BiHom-Poisson conformal algebra.
\end{prop}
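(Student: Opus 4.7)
The plan is to verify the three defining axioms of a BiHom-Poisson conformal algebra from Definition \ref{defBPA} for $(B, *'_{\l}, [\cdot_\l\cdot]', \a, \b)$: namely that $(B, *'_\l, \a, \b)$ is a commutative conformal algebra, that $(B, [\cdot_\l\cdot]', \a, \b)$ is a BiHom-Lie conformal algebra, and that the BiHom-Leibniz identity holds between the two operations. The key tool throughout will be the two Rota-Baxter (weight zero) identities
\begin{equation*}
R(x) *_\l R(y) = R(x *'_\l y), \qquad [R(x)_\l R(y)] = R[x_\l y]',
\end{equation*}
together with the standard compatibility $R\a = \a R$ and $R\b = \b R$, which together say exactly that $R$ is a morphism from the new operations back to the old. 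Sesqui-linearity in all cases is immediate since $R$ is $\mathbb{C}[\p]$-linear and commutes with $\a,\b$.

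First I would dispatch the commutative conformal algebra structure on $*'_\l$: commutativity transfers from the commutativity of $*_\l$ applied to each of the two terms $R(x)*_\l y$ and $x*_\l R(y)$. For associativity of $*'_\l$, I would expand both $\a(x) *'_\l (y *'_\m z)$ and $(x*'_\l y) *'_{\l+\m} \b(z)$ into three summands each (one for each placement of $R$), using $R(u *'_\m v) = R(u) *_\m R(v)$ to collapse the doubled-$R$ terms; the six resulting terms then match pairwise via three applications of the original conformal associativity with $R$ inserted in the appropriate slot. The BiHom-Lie axioms for $[\cdot_\l\cdot]'$ are proved in the exactly parallel manner: BiHom-skew-symmetry is inherited term-by-term, and the BiHom-Jacobi identity for $[\cdot_\l\cdot]'$ reduces, via the Lie Rota-Baxter identity, to copies of the original Jacobi identity with one or two $R$-insertions that cancel when summed cyclically.

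The main obstacle is the mixed BiHom-Leibniz identity
\begin{equation*}
[\a\b(x)_\l (y *'_\m z)]' = [\b(x)_\l y]' *'_{\l+\m} \b(z) + \b(y) *'_\m [\a(x)_\l z]',
\end{equation*}
because it couples the two operations and uses \emph{both} Rota-Baxter identities simultaneously. On the left, expanding $y *'_\m z$ into two summands and then the outer bracket $[\cdot_\l\cdot]'$ into two further summands a priori gives four terms, but the doubled-$R$ piece is $[\a\b(x)_\l R(y) *_\m R(z)]$ after using $R(y *'_\m z) = R(y) *_\m R(z)$, leaving three effectively distinct configurations; each is expanded by the original BiHom-Leibniz identity into two terms, yielding six. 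On the right, analogously expanding $[\b(x)_\l y]'$, the outer $*'_{\l+\m}$, and the parallel factor on the second summand, and then using the Lie Rota-Baxter identity in the form
\begin{equation*}
R[\b(R(x))_\l y] + R[\b(x)_\l R(y)] = [\b(R(x))_\l R(y)]
\end{equation*}
(and its $\a$-analogue, where $R\a = \a R$, $R\b = \b R$ are crucial), also produces six terms that match the left-hand side exactly. The difficulty is purely combinatorial bookkeeping of the six-against-six matching, not the introduction of any new identity; the proof concludes once this alignment is verified.
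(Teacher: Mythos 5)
Your proposal is correct and follows essentially the same route as the paper: expand each new operation, use the weight-zero Rota--Baxter identities $R(x)*_\l R(y)=R(x*'_\l y)$ and $[R(x)_\l R(y)]=R([x_\l y]')$ to collapse the doubled-$R$ terms, and match the resulting summands against one or more instances of the original axioms, with the mixed BiHom-Leibniz identity handled by exactly the six-against-six matching you describe. The only cosmetic difference is that the paper writes out the Lie and Leibniz verifications in full and leaves the associative part as ``similar,'' whereas you sketch all three uniformly.
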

\begin{proof} Here we show that $(B , [\cdot_\l\cdot]', \alpha, \beta)$ is a BiHom-Lie conformal algebra.  \begin{itemize}
		\item Conformal sesqui-linearity:\begin{align*}
	[\p(x)_\l y]'=& [R(\p (x))_\l y] + [\p (x)_\l R(y)]= [\p (R(x))_\l y] + [\p (x)_\l R(y)]\\&= -\l[ R(x)_\l y] -\l [x_\l R(y)]=-\l([ R(x)_\l y] + [x_\l R(y)])=-\l([x_\l y]')
		\end{align*}
	Similarly we can show $[x_\l \p(y)]'=(\l+\p)([x_\l y]')$.
	\item For BiHom-skew-symmetry:
	\begin{align*}
[\b(x)_\l \a(y)]'&= [ R(\b (x))_\l \a (y)] + [\b (x)_\l R(\a (y))]\\&=  [\b ( R(x))_\l \a (y)] + [\b (x)_\l \a(R( y))]\\&=  - [\b (y)_{-\p-\l} \a (R(x))]- [\b (R( y))_{-\p-\l }\a(x)]\\&=  - ([\b (y)_{-\p-\l} R (\a(x))] + [R (\b( y))_{-\p-\l }\a(x)])=  - ([\b (y)_{-\p-\l}   \a(x)]')
	\end{align*}
\item BiHom-Jacobi identity is given as follows:
\begin{align*}&[[\b(x)_\l y]'_{\l+\m}\b(z)]' + [\b(y)_\m[\a(x)_\l z]']'\\&= [([R(\b(x))_\l y]+ [\b(x)_\l R(y)])_{\l+\m}\b(z)]' + [\b(y)_\m([R(\a(x))_\l z]+ [\a(x)_\l R(z)])]'\\&= ([R([R(\b(x))_\l y]+ [\b(x)_\l R(y)])_{\l+\m}\b(z)]+ [([R(\b(x))_\l y]+ [\b(x)_\l R(y)])_{\l+\m}R\b(z)]) \\&+( [R\b(y)_\m([R(\a(x))_\l z]+ [\a(x)_\l R(z)])]+ [\b(y)_\m R([R(\a(x))_\l z]+ [\a(x)_\l R(z)])])\\&= [( [R(\b(x))_\l R(y)])_{\l+\m}\b(z)]+ [([R(\b(x))_\l y]+ [\b(x)_\l R(y)])_{\l+\m}R\b(z)] \\&+( [R\b(y)_\m([R(\a(x))_\l z]+ [\a(x)_\l R(z)])]+ [\b(y)_\m  [R(\a(x))_\l R(z)]])\\&= [( [\b(R(x))_\l R(y)])_{\l+\m}\b(z)]+ [[\b(R(x))_\l y]_{\l+\m}\b(R(z))]+ [[\b(x)_\l R(y)]_{\l+\m}\b(R(z))] \\&+ [\b(R(y))_\m[\a(R(x))_\l z]]+ [\b(R(y))_\m[\a(x)_\l R(z)]]+ [\b(y)_\m  [\a(R(x))_\l R(z)]]\\&= [\a\b(R(x))_\l[ R(y) _\m  z]]+ [\a\b R(x)_\l  [y_\m R(z)]]+ [\a\b(x)_\l[ R(y)_\m R(z)]]\\&=[\a\b(R(x))_\l[R(y)_\m z]]+[\a\b(R(x))_\l[y_\m R (z)]] +[\a\b(x)_\l R([R(y)_\m z]+[y_\m R (z)])]\\&=[\a\b(R(x))_\l[R(y)_\m z]]+[\a\b(x)_\l R([R(y)_\m z])] + [\a\b(R(x))_\l[y_\m R (z)]]+[\a\b(x)_\l R([y_\m R (z)])]\\&=[R(\a\b(x))_\l[R(y)_\m z]]+[\a\b(x)_\l R([R(y)_\m z])] + [R(\a\b(x))_\l[y_\m R (z)]]+[\a\b(x)_\l R([y_\m R (z)])]\\&=[\a\b(x)_\l[R(y)_\m z]]'+ [\a\b(x)_\l[y_\m R (z)]]'[\a\b(x)_\l[y_\m z]']'\\& =[\a\b(x)_\l([R(y)_\m z]+ [y_\m R (z)])]'=[\a\b(x)_\l[y_\m z]']'
\end{align*}	
	\end{itemize} 
Similarly we can show $(B , *'_{\l}, \alpha, \beta)$ is BiHom-associative conformal algebra. Thus, we only left to varify BiHom-Leibniz conformal identity. It can be seen as  $ $   
\begin{align*} [\a\b(x)_\l ( y*'_\m z) ]' & 
 =[R(\alpha\beta(x))_\l (R(y)*_\m z + y*_\m R(z))] + [\alpha\beta(x)_\l R(R(y)*_\m z + y*_\m R(z))] \\
& = [R(\alpha\beta(x))_\l ( R(y)*_\m z)] + [R(\alpha\beta(x))_\l (y*_\m R(z))] + [\alpha\beta(x)_\l (R(y)*_\m R(z))] \\
& = [R(\beta(x))_\l  R(y)]*_{\l+\m } \beta(z)  + R(\beta(y))*_\m[R(\alpha(x))_\l z] + [R\beta(x)_\l y]*_{\l+\m}R\beta(z) \\
& + \beta(y)*_\m [R\alpha(x)_\l R(z)] + [\beta(x)_\l R(y)]*_{\l+\m}R\beta(z) + R\beta(y)*_\m[\alpha(x)_\l R(z)]\\&= ([\b(Rx)_\l Ry])* _{\l+\m} \b(z)+ [\b(Rx)_\l y]* _{\l+\m} \b(Rz)+[\b(x)_\l Ry]* _{\l+\m} \b(Rz) \\&+ \b(Ry)*_\m [\a(Rx)_\l z]+ \b(Ry)*_\m[\a(x)_\l Rz] +\b(y)*_\m ([\a(Rx)_\l Rz]),\\&=
R([\b(Rx)_\l y]+[\b(x)_\l Ry])* _{\l+\m} \b(z)+ ([\b(Rx)_\l y]+[\b(x)_\l Ry])* _{\l+\m} \b(Rz) \\&+ \b(Ry)*_\m ([\a(Rx)_\l z]+ [\a(x)_\l Rz])+\b(y)*_\m R([\a(Rx)_\l z]+ [\a(x)_\l Rz]),\\&=([\b(x)_\l y]')*'_{\l+\m} \b(z) + \b(y)*'_\m ([\a(x)_\l z]')
\end{align*}
It completes the proof. \end{proof}
Now we define the tensor product of BiHom-Poisson conformal algebra as follows:
\begin{lem}   
		Let $(B_1, *_{1\l}, [\cdot_\l\cdot]_1, \alpha_1, \beta_1,\p_1)$ and $(B_2, *_{2\l}, [\cdot_\l\cdot]_2, \alpha_2, \beta_2,\p_2)$ are two $ (non-commutative) $ BiHom-Poisson conformal algebras. Define two linear maps $\alpha, \beta: B_1 \oplus B_2 \rightarrow B_1 \oplus B_2$ such that 
		$$\a=\a_1\otimes \a_2 ~~and~~ \b=\b_1\otimes \b_2$$ and two $\l$-multiplication maps $*_{\oplus\l}, [\cdot_{\oplus\l}\cdot]: (B_1 \oplus B_2) \otimes (B_1 \oplus B_2) \rightarrow (B_1 \oplus B_2)[\l]$ such that the following conditions hold for all $x,y \in B_1$ , $p,q \in B_2$ and $\l\in \mathbb{C}:$
		\[
		[(x + p)_{\oplus\l} (y + q)]= [x_\l y]_1  +  [p_\l q ]_2,
		\]
			\[ (x + p)_{\oplus\l}(y + q)=  ( p *_{2\l} q) + ( x*_{1\l} y ) ,
		\]
		
		Then $(B_1 \oplus B_2, *_{\oplus\l}, [\cdot_{\oplus\l}\cdot], \alpha, \beta)$ is a BiHom-Poisson conformal  algebra.
\end{lem}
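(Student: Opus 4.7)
The plan is to interpret the data componentwise: despite the notation $\alpha_1\otimes\alpha_2$ in the statement, the direct sum $B_1\oplus B_2$ inherits a $\mathbb{C}[\p]$-module structure and twist maps acting slotwise, namely $\alpha(x+p)=\alpha_1(x)+\alpha_2(p)$, $\beta(x+p)=\beta_1(x)+\beta_2(p)$ and $\p(x+p)=\p_1(x)+\p_2(p)$. The defined operations $*_{\oplus\l}$ and $[\cdot_{\oplus\l}\cdot]$ carry no ``cross terms'' between the two summands: anything happening in the $B_1$ slot is governed by the $B_1$ operations and anything in the $B_2$ slot by the $B_2$ operations. The proof therefore reduces to three verifications, each of which splits into the corresponding identities in $B_1$ and in $B_2$, which already hold by hypothesis.

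First, I would verify that $(B_1\oplus B_2, *_{\oplus\l},\alpha,\beta)$ is a commutative BiHom-associative conformal algebra. Conformal sesqui-linearity for $*_{\oplus\l}$ and commutation of $\alpha,\beta$ with $\p$ follow slotwise from the same properties in each $B_i$. Commutativity $(x+p)*_{\oplus\l}(y+q) = (y+q)*_{\oplus,-\p-\l}(x+p)$ reduces to $x*_{1\l}y = y*_{1,-\p-\l}x$ and $p*_{2\l}q = q*_{2,-\p-\l}p$, which are given. BiHom-associativity $\alpha(x+p)*_{\oplus\l}((y+q)*_{\oplus\m}(z+r)) = ((x+p)*_{\oplus\l}(y+q))*_{\oplus,\l+\m}\beta(z+r)$ similarly splits into the two associativities in $B_1$ and $B_2$.

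Next, I would verify condition (2) of Definition~\ref{defBPA}, that $(B_1\oplus B_2,[\cdot_{\oplus\l}\cdot],\alpha,\beta)$ is a BiHom-Lie conformal algebra. Sesqui-linearity, BiHom-skew-symmetry and the BiHom-Jacobi identity all expand linearly in the summands and collapse to the corresponding identities in $B_1$ and $B_2$, because the bracket of a $B_1$-element with a $B_2$-element does not appear in the definition (it is effectively zero). No computation beyond tracking indices is required.

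Finally, the BiHom-Leibniz identity \eqref{liebniz} for $B_1\oplus B_2$ reads
\begin{equation*}
[\alpha\beta(x+p)_\l((y+q)*_{\oplus\m}(z+r))]_{\oplus} = [\beta(x+p)_\l(y+q)]_{\oplus}*_{\oplus,\l+\m}\beta(z+r) + \beta(y+q)*_{\oplus\m}[\alpha(x+p)_\l(z+r)]_{\oplus}.
\end{equation*}
Expanding both sides by the componentwise definitions, every term containing a mixed pair $(B_1,B_2)$ vanishes, leaving exactly the sum of the two BiHom-Leibniz identities in $B_1$ and $B_2$, which hold by hypothesis. I do not expect a genuine obstacle here: the only point requiring care is reading $\alpha_1\otimes\alpha_2$ as the direct-sum (slotwise) map rather than a tensor product, and making sure that the absence of cross-interactions in the defining formulas means the verification separates cleanly into the two summands.
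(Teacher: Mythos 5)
Your proposal is correct and follows essentially the same route as the paper: both read the twist maps and the operations slotwise on $B_1\oplus B_2$, observe that the defining formulas contain no cross terms, and reduce each axiom (sesqui-linearity, commutativity/associativity, skew-symmetry, Jacobi, and the BiHom-Leibniz identity) to the corresponding identity holding separately in $B_1$ and in $B_2$. Your explicit remark that $\alpha_1\otimes\alpha_2$ must be read as the direct-sum (componentwise) map is exactly the reading the paper's computations implicitly use.
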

\begin{proof}In order to show  $(B_1 \otimes B_2, \cdot_{\oplus\l}, [\cdot_{\oplus\l}\cdot], \alpha, \beta, \p_\oplus)$ is a BiHom-Poisson conformal algebra. 
	\begin{enumerate}
		\item We first show that  $(B_1 \otimes B_2, [\cdot_{\oplus\l}\cdot], \alpha, \beta,\p_\oplus)$ is a BiHom-Poisson conformal algebra. For this we need to satisfy the following identities:\begin{itemize}
			\item Conformal sesqui-linearity:	\begin{align*}
			[\p_\oplus(x+ p)_{\oplus\l} (y + q)] &= [\p_1(x) + \p_2(p)_\l (y + q)]\\&= [\p_1(x) _\l (y)]+ [\p_2(p) _\l (q)]\\&=-\l[x _\l (y)]_1-\l[p _\l (q)]_2\\&=-\l([x _\l (y)]_1+[p _\l (q)]_2)\\&=-\l([(x + p)_{\oplus\l} (y + q)])
			\end{align*} Similarly, we can show that \begin{align*}
			[(x + p)_{\oplus\l} \p(y + q)]&=(\l+\p)([(x + p)_{\oplus\l} (y + q)])
			\end{align*}
			\item  For BiHom-skew symmetry:
			\begin{align*}
			[\beta(x + p)_{\l\oplus} \alpha(y + q)]
			&= [\beta_1(x) + \beta_2(p)_{\l\oplus} \alpha_1(y) + \alpha_2(q)] \\
			&= [\beta_1(x)_\l \alpha_1(y)]_1  +  [\beta_2(p)_\l \alpha_2(q)]_2 \\
			&= -[\beta_1(y)_{-\p-\l} \alpha_1(x)]_1 -  [\beta_2(q)_{-\p-\l} \alpha_2(p)]_2\\
			&= -[\beta_1(y) + \beta_2(q)_{\oplus(-\p-\l)} \alpha_1(x) + \alpha_2(p)] \\
			&= -[\beta(y + q)_{\oplus(-\p-\l)} \alpha(x + p)].
			\end{align*} Here we have used the fact that $B_1$ and $B_2$ are Lie conformal algebras and satisfy skew symmetric identity.
			\item For BiHom-Jacobi Identity, We compute that for $x,y,z \in B_1$ and $p,q,r \in B_2$:
			\begin{align*}
			\circlearrowright_{x,y,z}^{p,q,r}
			[\beta^2(x + p)_{\oplus\l} [\beta(y + q)_{\oplus\m} \alpha(z + r)]] 
			&= \circlearrowright_{x,y,z}^{p,q,r}
			[\beta^2_1(x)+ \beta^2_2(p)_{\oplus\l} [\beta_1(y)+ \beta_2(q)_{\oplus\m} \alpha_1(z) + \alpha_2(r)]] 
			\\&= \circlearrowright_{x,y,z}^{p,q,r} [\beta_1^2(x)_\l [\beta_1(y)_\m \alpha_1(z)]_1]_1 
			+[\beta_2^2(p)_\l [\beta_2(q)_\m\alpha_2(r)]_2]_2\\&=0.
			\end{align*}
		\end{itemize}
	\item It is easy to show that $(B_1 \oplus B_2,*_{\oplus\l}, \alpha, \beta)$ is a  conformal algebra. 
	\item Now we show that BiHom-Leibniz conformal identity holds, for this consider that:
	\begin{align*}
	& ([\b(x+ p)_{\oplus\l} (y+ q) ])*_{\oplus(\l+\m)} \b(z+ r) + \b(y+ q)*_{\oplus\m} ([\a(x+ p)_{\oplus\l} (z+ r)])\\& = ([(\b_1(x)+ \b_2(p))_{\oplus\l} (y+ q) ])*_{\oplus(\l+\m)} (\b_1(z)+\b_2(r)) + (\b_1(y)+ \b_2(q))*_{\oplus\m} ([(\a_1(x)+ \a_2(p))_{\oplus\l}z+ r])
	\\&  = ([\b_1(x)_\l y ]_1+[\b_2(p)_\l q ]_2)*_{\oplus(\l+\m)} (\b_1(z)+ \b_2(r)) + (\b_1(y)+\b_2(q))*_{\oplus\m} ([\a_1(x) _\l z ]_1+[\a_1(p) _\l r ]_2)\\&  = ([\b_1(x)_\l y ]_1)*_{1(\l+\m)} \b_1(z)+([\b_2(p)_\l q ]_2)*_{2(\l+\m)} \b_2(r) + \b_1(y)*_{1\m} ([\a_1(x) _\l z ]_1)+ \b_2(q)*_{2\m}([\a_2(p) _\l r ]_2)\\&  = ([\b_1(x)_\l y ]_1)*_{1(\l+\m)} \b_1(z)+ \b_1(y)*_{1\m} ([\a_1(x) _\l z ]_1) +([\b_2(p)_\l q ]_2)*_{2(\l+\m)} \b_2(r)+  \b_2(q)*_{2\m}([\a_2(p) _\l r ]_2)\\&= [\a_1\b_1(x)_\l(y*_{1\m} z)]_1+[\a_2\b_2(p)_\l(q*_{2\m }r)]_2\\&= [\a\b(x+p)_{\oplus\l}((y+q)*_{\oplus\m} (z+r))].
	\end{align*} 
	\end{enumerate}It completes the proof.
	\end{proof}
\begin{thm}\label{thmmor}
	Consider that $(B_1, *^1_{\l}, [\cdot_\l\cdot]_1, \alpha_1, \beta_1,\p_1)$ is a BiHom-Poisson conformal algebra, where $\alpha_1, \beta_1$ are structural maps. Assume that there exist $\phi_1,\psi_1\in gc(B_1)$ such that $\a_1,\b_1 , \phi_1,\psi_1$ commute. In this case $B'_1=(B_1, *'^{1}_{\l}=(\phi\otimes\psi)\circ *^1_{\l},[\cdot_\l\cdot]'_1= (\phi\otimes\psi)\circ [\cdot_\l\cdot]'_1, \alpha_1\phi, \beta_1\psi,\p_1)$ is a BiHom-Poisson conformal algebra.\\
	Now consider that,  $(B_2, *^2_{\l}, [\cdot_\l\cdot]_2, \alpha_2, \beta_2,\p_2)$ is another BiHom-Poisson conformal algebra  and there exists $\phi_2,\psi_2$ such that $\alpha_2, \beta_2,\phi_2,\psi_2$ commute.\\ 
	 Let the $f$ is the morphism of these BiHom-Poisson conformal algebras, given by $$ (B_1, *^1_{\l}, [\cdot_\l\cdot]_1, \alpha_1, \beta_1,\p_1)\to  (B_2, *^2_{\l}, [\cdot_\l\cdot]_2, \alpha_2, \beta_2,\p_2) $$ such that $f\phi=\phi' f$ and  $f\psi=\psi' f$. In this scenario $f : B_1'\to B_2'$ is also a morphism.
\end{thm}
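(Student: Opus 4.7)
The plan is to verify that $B'_1$ satisfies the three defining axioms of a BiHom-Poisson conformal algebra from Definition \ref{defBPA}, and then to check that $f$ stays compatible with the twisted operations. The crucial inputs throughout are that $\phi_1,\psi_1\in gc(B_1)$ (so they commute with $\p_1$) and that they commute with $\a_1,\b_1$ by hypothesis; the analogous facts hold on $B_2$.

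First I would verify that $(B_1, *'^{1}_\l, \a_1\phi_1, \b_1\psi_1)$ is a commutative conformal algebra. Conformal sesqui-linearity of $*'^{1}_\l$ follows from that of $*^1_\l$ together with $\phi_1\p_1=\p_1\phi_1$ and $\psi_1\p_1=\p_1\psi_1$. Commutativity reduces to the original commutativity once $\phi_1$ and $\psi_1$ are commuted past the inputs. For the associativity identity
\[
\a_1\phi_1(x) *'^{1}_{\l}(y *'^{1}_{\m} z) = (x *'^{1}_{\l} y) *'^{1}_{\l+\m}\b_1\psi_1(z),
\]
I would expand both sides, use that $\phi_1,\psi_1$ commute with $\a_1,\b_1$ to pull the twists to the outside, and then invoke the associativity axiom for $*^1_\l$.

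Next, the same general strategy gives the BiHom-Lie conformal algebra structure on $(B_1, [\cdot_\l\cdot]'_1, \a_1\phi_1, \b_1\psi_1)$: sesqui-linearity, BiHom-skew-symmetry, and the BiHom-Jacobi identity each reduce to the corresponding axioms for $[\cdot_\l\cdot]_1$ after moving $\phi_1,\psi_1$ past $\a_1,\b_1,\p_1$. For the BiHom-Leibniz compatibility
\[
[\a_1\phi_1\b_1\psi_1(x)_\l(y *'^{1}_{\m}z)]'_1 = [\b_1\psi_1(x)_\l y]'_1 *'^{1}_{\l+\m}\b_1\psi_1(z) + \b_1\psi_1(y)*'^{1}_{\m}[\a_1\phi_1(x)_\l z]'_1,
\]
one expands both the twisted bracket and the twisted product, shuffles $\phi_1,\psi_1$ to the outermost position using the commuting hypothesis, and applies the original Leibniz identity \eqref{liebniz}. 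This step is the main obstacle: because both operations have been twisted simultaneously, one must carefully track that each of the three terms collects the same multiplicity of $\phi_1$'s and $\psi_1$'s, so that they can be simultaneously absorbed.

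Finally, for the morphism claim I would verify directly that $f[x_\l y]'_1 = [f(x)_\l f(y)]'_2$ and $f(x *'^{1}_{\l} y) = f(x)*'^{2}_{\l} f(y)$; both unfold using the hypotheses $f\phi_1=\phi_2 f$, $f\psi_1=\psi_2 f$ and the fact that $f$ is already a morphism for the untwisted operations. Compatibility with the new structural maps, $f\circ(\a_1\phi_1) = (\a_2\phi_2)\circ f$ and $f\circ(\b_1\psi_1) = (\b_2\psi_2)\circ f$, follows by combining these intertwinings with $f\a_1=\a_2 f$ and $f\b_1=\b_2 f$, completing the plan.
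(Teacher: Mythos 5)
Your proposal is correct and follows essentially the same route as the paper: twist each operation by $\phi\otimes\psi$, commute $\phi,\psi$ past $\a,\b,\p$ so that each axiom reduces to the corresponding untwisted one, and verify the morphism identities via $f\phi=\phi'f$, $f\psi=\psi'f$. The only difference is thoroughness --- the paper writes out only the associative-conformal axioms and the bracket part of the morphism claim, declaring the BiHom-Lie, Leibniz and product verifications ``similar,'' whereas you plan to carry them out; note that both arguments tacitly use that $\phi$ and $\psi$ are multiplicative (algebra endomorphisms), which is what actually lets one ``pull the twists outside'' and is not explicit in the stated hypotheses.
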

\begin{proof}
	To show $B'_1=(B_1, *'^1_{\l}=(\phi\otimes\psi)\circ *^1_{\l}, [\cdot_\l\cdot]'_1= (\phi\otimes\psi)\circ [._\l. ]'_1, \alpha_1\phi, \beta_1\psi,\p_1)$ is a BiHom-Poisson conformal algebra, we only show the associative  conformal algebra case, other cases can be proved similarly.
	Note that \begin{itemize}
		\item Conformal sesqui-linearity:
		\begin{align*}
		\p(x)*'^1_{\l} y&= (\phi\otimes \psi) (\p(x)*^1_{\l} y)= (\p(\phi (x))*^1_{\l} \psi(y))\\&= -\l( \phi (x) *^1_{\l} \psi(y))=-\l( \phi \otimes \psi)(x  *^1_{\l} y)= -\l (x*'^1_{\l} y)
		\end{align*}
		\begin{align*}
		x*'^1_{\l} \p(y)=& (\phi\otimes \psi)(x*^1_\l \p(y))= \phi(x)*^1_\l \p(\psi( y))\\&= (\l+\p)\phi(x)*^1_{\l} \psi(y)=(\l+\p)(x*'^1_{\l} y).
		\end{align*} 
		\item BiHom-skew symmetry:
		\begin{align*}
		\b_1(\psi(x))*'^1_{\l} \a_1(\phi(y))&=\b_1(\phi\psi(x))*^1_{\l} \a_1(\phi\psi(y))\\&=\b_1(\phi\psi(y))*^1_{-\p-\l} \a_1(\phi\psi(x))
		=\b_1\psi(y)*'^1_{ -\p-\l } \a_1\phi(x).
		\end{align*}
		\item  Associative conformal identity:
		\begin{align*}
		\a\phi(x)*'^1_{\l} (y*'^1_{\m} z) &=\a\phi\phi(x)*^1_{\l} \psi(\phi (y)*^1_{\m} \psi (z))
		=\a\phi^2(x)*^1_{\l} (\psi\phi (y)*^1_{\m} \psi^2 (z))\\&=(\phi^2 (x)*^1_{\l}\phi\psi (y))*^1 _{\l+\m} \b\psi^2(z)=\phi(\phi (x)*^1_{\l}\psi (y))* ^1_{\l+\m} \psi\b\psi(z)\\&=(x*'^1_{\l}y)*'^1_{ \l+\m } \b\psi(z).
		\end{align*}
	\end{itemize}Thus, $B'_1=(B_1, *'^1_{\l}=(\phi\otimes\psi)\circ *^1_{\l} , \alpha_1\phi, \beta_1\psi,\p_1)$ is an associative  conformal algebra. Now, we show that there is an algebra morphism from $f : B_1'\to B_2'$. Assume that $x,y\in B'_1$ and $\l,\m\in \mathbb{C}$, we have 
\begin{align*}
f[x_\l y]'_1 &= f[\phi (x)_\l \psi (y)]_1
= [f(\phi (x))_\l f(\psi (y))]_2\\&= [\phi'f(x)_\l \psi'f( y)]_2=(\phi'\otimes \psi')\circ[f( x)_\l f( y)]_2\\&= [f( x)_\l f( y)]'_2.
\end{align*} Similarly, we get
\begin{align*}
f( x*'^1_{\l} y) &=  (f( x)*'^2_{\l} f( y)).
\end{align*}This completes the proof.
\end{proof}
From the previous theorem, we can get the following two results:
\begin{cor}
Let $(B , *_{\l}, [\cdot_\l\cdot], \alpha, \beta)$ be a BiHom-Poisson conformal algebra.  then  $B^k =(B , *^k_{ \l}=(\a^k\otimes\b^k)\circ *_{ \l} , [\cdot_\l\cdot]^k= (\a^k\otimes\b^k)\circ[\cdot_\l\cdot], \alpha^{k+1} , \beta^{k+1})$  is also a  BiHom-Poisson conformal algebra.
\end{cor}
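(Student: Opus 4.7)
The plan is to derive this corollary as the direct specialization of Theorem~\ref{thmmor} in which the auxiliary endomorphisms are chosen to be powers of the structural maps themselves. Specifically, I will take $\phi := \a^k$ and $\psi := \b^k$ in the single-algebra part of that theorem. With this choice, the twisted product $(\phi\otimes\psi)\circ *_\l$ coincides verbatim with $*^k_\l$, the twisted bracket $(\phi\otimes\psi)\circ [\cdot_\l\cdot]$ coincides with $[\cdot_\l\cdot]^k$, and the new structural maps $\a\phi$ and $\b\psi$ reduce to $\a^{k+1}$ and $\b^{k+1}$, matching the statement of the corollary exactly.

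Before invoking the theorem, I would verify its two hypotheses for this choice of $\phi,\psi$. First, $\a^k,\b^k \in gc(B)$, which amounts to $\p\a^k = \a^k\p$ and $\p\b^k = \b^k\p$; both follow from $\p\a = \a\p$ and $\p\b = \b\p$ by a one-line induction on $k$. Second, the four maps $\a,\b,\a^k,\b^k$ must commute pairwise. The commutations of $\a$ with $\a^k$ and of $\b$ with $\b^k$ are automatic, while the mixed ones $\a\b^k = \b^k\a$ and $\b\a^k = \a^k\b$ follow by induction from the standing BiHom commutativity $\a\b = \b\a$ between the two structural maps.

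Once these two checks are recorded, Theorem~\ref{thmmor} immediately produces the BiHom-commutativity, BiHom-Lie, and BiHom-Leibniz identities for the quadruple $(*^k_\l,[\cdot_\l\cdot]^k,\a^{k+1},\b^{k+1})$, so no direct manipulation of \eqref{liebniz} or of the BiHom-Jacobi identity is required in this corollary. The only conceptual subtlety, and therefore the main obstacle such as it is, lies in recognizing the powers $\a^k,\b^k$ as admissible twisting maps in the sense of Theorem~\ref{thmmor}; once this is granted, the corollary is essentially a one-line consequence of the preceding theorem.
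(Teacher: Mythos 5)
Your proposal is correct and is essentially identical to the paper's own proof, which likewise obtains the corollary by applying Theorem~\ref{thmmor} with $\phi=\a^k$ and $\psi=\b^k$. Your additional verification that $\a^k,\b^k$ commute with $\p$ and with $\a,\b$ is a sensible (and welcome) explicit check of the theorem's hypotheses that the paper leaves implicit.
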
 \begin{proof} Here the proof can be completed by using theorem \ref{thmmor} and replacing  $ \phi,\psi $ by $ \a^k ,\b^k $ respectively.
\end{proof}
\begin{cor}
	Assume that $(P , *_{\l}, [\cdot_\l\cdot])$ be a Poisson conformal algebra. Let $\a,\b$ are linear endomorphisms of $P$ then $B' =(P , *'_{ \l}=(\a\otimes\b )\circ *_{ \l} , [\cdot_\l\cdot]' = (\a\otimes\b )\circ[\cdot_\l\cdot ], \alpha , \beta)$ is another associated  BiHom-Poisson conformal algebra.
\end{cor}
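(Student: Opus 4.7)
The plan is to obtain this statement as an immediate specialization of Theorem \ref{thmmor}. Any Poisson conformal algebra $(P,*_\l,[\cdot_\l\cdot])$ can be viewed as a BiHom-Poisson conformal algebra $(P,*_\l,[\cdot_\l\cdot],\mathrm{id}_P,\mathrm{id}_P)$ with trivial twist maps, since the four axioms of Definition \ref{defBPA} — commutativity of $*_\l$, BiHom-associativity, the BiHom-Lie identities, and the BiHom-Leibniz identity — all collapse to the usual Poisson conformal axioms when $\a_1=\b_1=\mathrm{id}_P$.

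With this viewpoint I would apply Theorem \ref{thmmor} in the case $B_1=P$, $\a_1=\b_1=\mathrm{id}_P$, $\phi=\a$, $\psi=\b$. The commutation hypothesis ``$\a_1,\b_1,\phi,\psi$ pairwise commute'' is automatic because two of the four maps are the identity. The only remaining requirement is that $\a,\b\in gc(P)$, i.e.\ $\a\p=\p\a$ and $\b\p=\p\b$; this is the implicit meaning of ``linear endomorphisms'' in the conformal setting used throughout the paper, and I would state this assumption explicitly at the start of the proof.

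Theorem \ref{thmmor} then produces the BiHom-Poisson conformal algebra with twist maps $\a_1\phi=\a$ and $\b_1\psi=\b$, product $(\phi\otimes\psi)\circ *_\l=(\a\otimes\b)\circ *_\l=*'_\l$, and bracket $(\phi\otimes\psi)\circ[\cdot_\l\cdot]=(\a\otimes\b)\circ[\cdot_\l\cdot]=[\cdot_\l\cdot]'$, which is exactly the structure $B'$ in the statement. Since the verification of every BiHom axiom was already carried out in the proof of Theorem \ref{thmmor} (with the subcase $\a_1=\b_1=\mathrm{id}$ simplifying, rather than complicating, each computation), no new calculation is needed here. The mildest subtlety — the closest thing to an obstacle — is simply making the trivial-twist embedding of $P$ into the BiHom framework explicit so that the hypotheses of the theorem line up, and confirming that the implicit assumption $\a,\b\in gc(P)$ is the intended reading of ``linear endomorphisms'' in the corollary.
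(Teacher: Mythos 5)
Your proposal is correct and matches the paper's own proof, which likewise obtains the corollary by specializing Theorem \ref{thmmor} to trivial structural maps $\a_1=\b_1=\mathrm{id}$ and reading $\phi,\psi$ as the new twists $\a,\b$. Your version is simply more explicit about the trivial-twist embedding and the implicit assumption $\a\p=\p\a$, $\b\p=\p\b$, which the paper leaves unstated.
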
\begin{proof}The proof is followed by considering $\a=\b=id$ in Theorem \ref{thmmor}.\end{proof} Now we introduce the conformal representations of BiHom-Poisson conformal algebra as follows
	\begin{defn}A conformal representation of a BiHom-Poisson conformal algebra $ (B, *_\l, [._\l.], \a,\b) $ on a $ \mathbb{C}[\p] $-module $M$ endowed with a linear maps $\phi,\psi : M\to M$ such that $ \phi\p = \p\phi $ and $\p\psi=\psi\p$ is a tuple $(M,l,r, \rho,\phi,\psi) $, where $(M,l,r,\phi,\psi)$ is a conformal representation of the  conformal algebra $ (B,*_\l, \a,\b) $ and $(M,\rho,\phi,\psi)$ is a conformal representation of the BiHom-Lie conformal algebra $ (B,[._\l.], \a,\b) $, such that for all $x,y\in B$, $m\in M$ and $\l,\m\in \mathbb{C}$ we have the following equations
		\begin{eqnarray}
		\label{eq17}&l([\b(x)_\l y])_{\l+\m}\psi(m) = \rho(\a\b(x))_\l (l(y)_\m m) - l(\b(y))_\m{\rho(\a(x))}_\l m\\&r([\a(x)_\l y])_{\l+\m}\psi(m) = \rho(\a\b(x))_\l (r(y)_\m m) - r(\b(y))_\m{\rho(\b(x))}_\l m\\& \rho(x _\l y)_{\l+\m}\phi\psi(m) = l(\a(x))_\l (\rho(y)_\m\phi(m)) - l(\a(y))_\m (\rho(x)_\l \psi(m)). 
		\end{eqnarray}
	\end{defn}
	\begin{prop}Let $(l,r, \rho , \phi, \psi, M)$ be a conformal representation of a BiHom-Poisson conformal algebra $(B, *_\l,[._\l.], \a, \b)$, where $M$ is a $ \mathbb{C}[\p] $-module, $\phi,\psi$ are  $ \mathbb{C} $-linear maps. Moreover, assume that $\a,\psi $ are bijective maps. Then, the semi direct product $B\oplus M$ of vector spaces is turned into a BiHom-Poisson conformal algebra $(B\oplus M, *'_\l,[._\l.]', \a, \b)$,by the $\l$-multiplication in $B \oplus M$ defined as follows
		\begin{equation*}
		\begin{aligned}(x_1 +m_1)*'_\l(x_2 + m_2) &:= x_1 *_\l x_2 + (l(x_1)_\l m_2 + r(x_2)_{\l} m_1),\\ [(x_1 +m_1)_\l(x_2 + m_2)]' &:= [{x_1} _\l x_2] + ({\rho(x_1)}_\l m_2 - {\rho(\a^{-1}\b (x_2))}_{-\p-\l}\phi\psi^{-1}( m_1)),\\
		(\a \oplus \phi)(x_1 + m_1) &:= \a(x_1) + \phi(m_1),\\
		(\b \oplus \psi)(x_1 + m_1) &:= \b(x_1) + \psi(m_1),
		\end{aligned}
		\end{equation*}for all $x_1, x_2 \in B, m_1, m_2 \in M$ and $\l,\m\in \mathbb{C}$. 
	\end{prop}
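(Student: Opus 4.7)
The plan is to verify the three defining conditions of a BiHom-Poisson conformal algebra (Definition \ref{defBPA}) for the tuple $(B\oplus M,*'_\l,[\cdot_\l\cdot]',\a\oplus\phi,\b\oplus\psi)$, leveraging the two earlier semi-direct product propositions so that only the Leibniz compatibility is left to check by hand.

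First, since $(M,l,r,\phi,\psi)$ is a conformal representation of the commutative conformal algebra $(B,*_\l,\a,\b)$, the proposition following Definition \ref{def2.6} immediately gives that $(B\oplus M,*'_\l,\a\oplus\phi,\b\oplus\psi)$ is a conformal algebra; commutativity $(x_1+m_1)*'_\l(x_2+m_2) = (x_2+m_2)*'_{-\p-\l}(x_1+m_1)$ follows directly from the commutativity of $*_\l$ together with the identification $l(x)_\l m = x*_\l m$ and $r(x)_\l m = m*_{-\p-\l}x$ built into Definition \ref{def2.6}. Similarly, since $(M,\rho,\phi,\psi)$ is a conformal representation of the BiHom-Lie conformal algebra $(B,[\cdot_\l\cdot],\a,\b)$ and $\a,\psi$ are bijective, the proposition after Definition \ref{defrepBLA} yields that $(B\oplus M,[\cdot_\l\cdot]',\a\oplus\phi,\b\oplus\psi)$ is a BiHom-Lie conformal algebra.

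What remains is to verify the BiHom-Leibniz identity \eqref{liebniz} for $*'_\l$ and $[\cdot_\l\cdot]'$. I would pick generic elements $X_i = x_i + m_i \in B\oplus M$ for $i=1,2,3$ and expand both sides
\[
[(\a\oplus\phi)(\b\oplus\psi)(X_1)_\l(X_2*'_\m X_3)]' \quad\text{and}\quad ([(\b\oplus\psi)(X_1)_\l X_2]')*'_{\l+\m}(\b\oplus\psi)(X_3) + (\b\oplus\psi)(X_2)*'_\m[(\a\oplus\phi)(X_1)_\l X_3]'.
\]
The purely $B$-valued parts cancel by the Leibniz identity on $B$. The remaining terms live in $M$ and split into three families: those involving only one of $m_1,m_2,m_3$. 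The $m_3$-terms are controlled by the compatibility identity \eqref{eq17} (after applying the sesqui-linearity rules and the commutation of $\phi,\psi$ with $\p$), the $m_2$-terms are controlled by the next identity in the definition using $r$, and the $m_1$-terms are controlled by the last compatibility identity relating $\rho$ to $l$ (using the bijectivity of $\a$ and $\psi$ to absorb the $\a^{-1}\b$ and $\phi\psi^{-1}$ twists appearing in $[\cdot_\l\cdot]'$).

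The main obstacle is the bookkeeping in this final verification: the twists $\a^{-1}\b$ and $\phi\psi^{-1}$ that show up in the definition of $[\cdot_\l\cdot]'$ must be matched carefully against the arguments appearing in the three compatibility relations, and the spectral parameters (in particular the shifts by $-\p-\l$) must be tracked through the commutativity of $*_\l$ and the BiHom-skew-symmetry of $[\cdot_\l\cdot]$. I expect that once one rewrites the mixed terms using $l(x)_\l m=x*_\l m$, $r(x)_\l m=m*_{-\p-\l}x$, and uses commutativity of $*_\l$ to recast one of $r$-terms as an $l$-term (or vice versa), each of the three compatibility identities matches exactly one homogeneous block, completing the proof.
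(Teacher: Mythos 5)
Your proposal follows essentially the same route as the paper: the associative and Lie parts are delegated to the earlier semi-direct product constructions, and the BiHom-Leibniz identity is checked by expanding both sides and splitting the $M$-valued remainder into three homogeneous blocks (in $m_3$, $m_2$, $m_1$) killed respectively by the compatibility identity \eqref{eq17}, the $r$-compatibility, and the $\rho$--$l$ compatibility, with the pure $B$-part cancelling by the Leibniz identity on $B$. This matches the paper's grouping of terms exactly, so the sketch is the right proof.
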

	\begin{proof}To show $B\oplus M$ is a space of BiHom-Poisson conformal algebra, we need to satisfy the axioms of Definition \ref{defBPA}. First axiom is straight forward to show and second axiom can be seen in Proposition 3.1 of \cite{C}. Now we  only  left to prove the BiHom-Leibniz conformal identity. For all $x_1, x_2, x_3\in B$,  $m_1, m_2, m_3 \in M$ and $\l, \m\in \mathbb{C}$ we have  
		\begin{align*}&[{(\a\b + \phi\psi)(x_1 + m_1)}_{\l}((x_2 + m_2) *'_\m (x_3 + m_3))]'\\&
		- [{(\b + \psi)(x_1 + m_1)}_\l (x_2 + m_2)]' {*'}_{\l+\m} (\b + \psi)(x_3 + m_3)\\&
		- {(\b+\psi)(x_2 + m_2)}{*'}_\m [{(\a + \phi)(x_1 + m_1)}_\l (x_3 + m_3)]'\\
		=& [{(\a\b(x_1) +\phi\psi(m_1))}_\l (x_{2} *_\m x_{3} + {l(x_{2})}_\m m_{3} +{ r(x_{3})}_\m m_{2})]'
	\\&- [{(\b(x_1) + \psi(m_1))}_\l (x_2 + m_2)]' {*'}_{\l+\m} (\b(x_3) + \psi (m_3))\\&
	- (\b(x_2)+\psi(m_2)){*'}_\m[{(\a(x_1) + \phi( m_1))}_\l (x_3 + m_3)]'\\
	=& [{(\a\b(x_1) +\phi\psi(m_1))}_\l (x_{2} *_\m x_{3} + {l(x_{2})}_\m m_{3} + {r(x_{3})}_{\m} m_{2})]'\\&- ([{\b(x_1)}_\l x_2]+{\rho(\b(x_1))}_\l m_2-{\rho(\a^{-1}\b(x_2))}_{-\p-\l}\phi(m_1)) {*'}_{\l+\m} (\b(x_3) + \psi (m_3))\\&-(\b(x_2)+\psi(m_2)){*'}_\m {[{\a(x_1)}_\l (x_3)]+{\rho(\a(x_1))}_\l m_3-{\rho(\a^{-1}\b(x_3))}_{-\p-\l}\phi^2\psi^{-1}(m_1)}
	\\=&[{\a\b(x_1)}_\l({x_2} *_\m x_3)]+{\rho(\a\b(x_1))}_{\l}{l(x_2)_\m m_3}+{\rho(\a\b(x_1))}_\l({r(x_3)}_{\m} m_2)- \rho(\a^{-1}\b({x_2}_\m x_3))_{-\p-\l}\phi^2(m_1)
	\\&-[{\b(x_1)}_\l x_2]*_{\l+\m}\b(x_3)-l{([{\b(x_1)}_\l x_2])}_{\l+\m}\psi(m_3) \\&-{r(\b(x_3))}_{-\p-\m}\rho(\b(x_1))_\l m_2+ r(\b(x_3))_{-\p-\m}\rho(\a^{-1}\b(x_2))_{-\p-\l} \phi (m_1)
	\\&-\b(x_2)*_\m [{\a(x_1)}_\l x_3]-l(\b(x_2))_\m(\rho(\a(x_1))_\l m_3)\\&+ l(\b(x_2))_\m(\rho(\a^{-1}\b(x_3))_{-\p-\l}\phi^2\psi^{-1}(m_1))+ r([{\a(x_1)}_\l x_3])_{-\p-\m}\psi(m_2)
	\\=&\left([{\a\b(x_1)}_\l ({x_2}*_\m {x_3})] - [{\b(x_1)}_\l {x_2}]*_{\l+\m} \b(x_3) - \b(x_2)*_\m[{\a(x_1)}_\l {x_3}]\right)\\&+\left({\rho(\a\b(x_1))}_\l {l(x_2)}_\m{(m_3)} - l([{\b(x_1)}_\l {x_2}])\psi(m_3)- l(\b(x_2))_\m{{\rho(\a(x_1))}_\l(m_3)}\right)\\&+\left({\rho(\a\b(x_1))}_\l {{r(x_3)}_{\m}(m_2)} - {r(\b(x_3))}_{-\p-\m}{\rho(\b(x_1))}_\l {(m_2)} -{r([{\a(x_1)}_\l {x_3}])}_{-\p-\m}{\psi(m_2)}\right)\\&- \left({\rho(\a^{-1}\b({x_2}*_\m  {x_3}))}_{-\p-\l}\phi^{2}(m_1)+ {r(\b(x_3))}_{-\p-\m}{\rho(\a^{-1}\b(x_2))}_{-\p-\l}{\phi(m_1)}+ {l(\b(x_2))}_\m {\rho(\a^{-1}\b(x_3))_{-\p-\l}\phi^2\psi^{-1}(m_1)}\right)\\=&0. \end{align*}This finishes the proof.\end{proof}
\begin{ex}
Let $ (B, *_{\l}, [._\l.],\a,\b) $ be a  BiHom-Poisson conformal algebra. Then
$(l, r, ad, \a, \b, B) $ is a regular representation of $ B $, where $l(x)_\l y = x*_\l y, r(x)_\l y =y*_{-\p-\l} x$ and $ ad(x)_\l y = [x_\l y], $ for all $x, y \in B$ and $\l\in \mathbb{C}$.\end{ex}
	\section{BiHom-pre-Poisson conformal algebra and its conformal  Bimodule}
In this section we first introduce the notion and conformal representation of BiHom-preLie conformal algebra, that lead us to describe (noncommutative) BiHom-pre-Poisson conformal algebra and discussed conformal bimodule structure on it.
\begin{defn} A BiHom-preLie conformal algebra $ (A, *_\l, \a, \b)$ is tuple consisting of a $ \mathbb{C}[\p] $- module $B$, a $\mathbb{C}$-bilinear map $ *_\l: B\times B \to B[\l]$ , two commutative multiplicative linear maps $\a,\b:B\to B$ such that $\p\a=\a\p$,$\b\p=\p\b$, satisfying the following equation for all $x,y,z \in B$ and $\l,\m\in \mathbb{C}$
\begin{equation*}(\b(x) *_\l\a(y)) *_{\l+\m}\b(z)-\a\b(x) *_\l (\a(y) *_\m z) = (\b(y) *_\m\a(x)) *_{\l+\m}\b(z)-\a\b(y) *_\m (\a(x) *_{\l+\m} z).\end{equation*} 
\end{defn}If $B$ is finitely generated, then  BiHom-preLie  conformal algebra is called finite. \begin{prop}\label{prop3.2}
Let $ (B, *_\l, \a, \b)$ be a regular BiHom-preLie conformal algebra with bijective structure map $\a$ and $\b$. Then $ (B,[ ._\l.], \a, \b)$ is called BiHom-Lie conformal algebra with the $\l$- bracket given by 
\begin{equation*}
[x_\l y] = x *_\l y -\a^{-1}\b(y) *_{-\p-\l} \a\b^{-1}(x),
\end{equation*}
for  all $x, y \in B$. We call this algebra  $B^c= (B,[ ._\l.], \a, \b)$ a sub-adjacent BiHom-Lie conformal algebra of $(B, *_\l,\a,\b)$.
\end{prop}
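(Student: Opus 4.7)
The plan is to verify the three defining axioms of a BiHom-Lie conformal algebra for the proposed bracket $[x_\l y] := x*_\l y - \a^{-1}\b(y)*_{-\p-\l}\a\b^{-1}(x)$. Since $\a,\b$ are bijective and commute with each other as well as with $\p$, both $\a^{-1}\b$ and $\a\b^{-1}$ are $\mathbb{C}$-linear operators on $B$ that commute with $\p$. Hence the formula defines a $\mathbb{C}$-bilinear map $B\otimes B \to B[\l]$, and the compatibilities $\a\p=\p\a$, $\b\p=\p\b$ are inherited directly from $(B,*_\l,\a,\b)$.

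First I would confirm conformal sesqui-linearity. For $[(\p x)_\l y]$, the summand $(\p x)*_\l y$ contributes $-\l(x*_\l y)$ by sesqui-linearity of $*_\l$, while in $\a^{-1}\b(y)*_{-\p-\l}\a\b^{-1}(\p x)$ one pushes $\p$ past $\a\b^{-1}$ and applies sesqui-linearity in the second slot at spectral parameter $-\p-\l$, producing $-\l(\a^{-1}\b(y)*_{-\p-\l}\a\b^{-1}(x))$; combining the two gives $-\l[x_\l y]$. The identity $[x_\l(\p y)]=(\p+\l)[x_\l y]$ is symmetric. For the conformal BiHom-skew-symmetry $[\b(x)_\l\a(y)]=-[\b(y)_{-\p-\l}\a(x)]$, expanding both sides via the formula collapses the composite twists through the commutativity of $\a,\b$: one has $\a^{-1}\b\a=\b$ and $\a\b^{-1}\b=\a$, so each side reduces to $\pm(\b(x)*_\l\a(y)-\b(y)*_{-\p-\l}\a(x))$ and the skew-symmetry is manifest.

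The main obstacle is the conformal BiHom-Jacobi identity, which I would establish in the Leibniz form
\[
[\a\b(x)_\l[y_\m z]] = [[\b(x)_\l y]_{\l+\m}\b(z)] + [\b(y)_\m[\a(x)_\l z]].
\]
Each of the three outer brackets contains an inner bracket that first expands into two $*_\l$-products, and then the outer bracket expands each of those into two more, producing twelve $*_\l$-summands in total. I would then organize these twelve terms into groups on which the BiHom-preLie axiom can be applied: the axiom asserts precisely that the twisted associator of $*_\l$ is symmetric in its first two slots, and this symmetry furnishes the cancellations between a term from $[\a\b(x)_\l[y_\m z]]$ and the matching term from one of the two right-hand brackets. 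The delicate point is bookkeeping — each triple fed to the preLie axiom must appear in the exact twisted form required, and to achieve this one inserts suitable compositions of $\a^{\pm 1}\b^{\mp 1}$ into each argument, which is legitimate because $\a,\b$ are bijective and mutually commuting. Once the matching is set up, the remaining cancellations are purely formal and the identity follows, completing the proof that $(B,[\cdot_\l\cdot],\a,\b)$ is a BiHom-Lie conformal algebra.
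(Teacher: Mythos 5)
The paper states this proposition without any proof, so there is no argument of the authors' to compare yours against; what you propose is the standard route (verify sesquilinearity, BiHom-skew-symmetry, and the BiHom-Jacobi identity for the commutator bracket), and your treatment of the first two axioms is complete and correct — in particular the reductions $\a^{-1}\b\a=\b$, $\a\b^{-1}\b=\a$ and the sign bookkeeping at spectral parameter $-\p-\l$ are handled properly. The one substantive axiom, the Jacobi identity, you only describe rather than execute, and there is a point you gloss over: to organize the twelve $*_\l$-summands into triples on which the BiHom-preLie axiom applies, you must distribute the composite twists over products, e.g.\ rewrite $\a^{-1}\b(y*_\m z)$ as $\a^{-1}\b(y)*_\m\a^{-1}\b(z)$ in the term $\a^{-1}\b(y*_\m z)*_{-\p-\l}\a^2(x)$ coming from $[\a\b(x)_\l[y_\m z]]$. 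Bijectivity and commutativity of $\a,\b$, which is all you invoke, do not give you this; you need their multiplicativity with respect to $*_\l$. That hypothesis is in fact part of the paper's definition of a BiHom-preLie conformal algebra (``two commutative multiplicative linear maps''), so the argument does go through, but it should be cited explicitly — it is the hinge on which the entire twelve-term cancellation turns, and a reader cannot check your matching of terms against the preLie axiom without it. With that made explicit, and with the cancellation actually written out (it is the conformal analogue of the known fact that a regular BiHom-left-symmetric algebra is BiHom-Lie admissible), your proof is complete.
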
Now, we introduce the notion of conformal representation of a BiHom-preLie conformal algebra in the following definition.
\begin{defn} Let $ (B, *_\l, \a,\b) $ be a BiHom-preLie conformal algebra, and let $(M, \phi, \psi)$ be a BiHom-conformal module. Let $l_*, r_* : B \to gc(M)$ are two $\mathbb{C}$-linear maps. The tuple $(l_*, r_*,\phi,\psi, M) $ is called a conformal representation of BiHom-preLie conformal algebra $B$, if for all $ x, y \in B, m \in M $ and $\l,\m\in \mathbb{C}$, we have \begin{align}&l_*(\p x)_\l m = -\l(l_*(x)_\l  m),\\&
	l_*(x)_\l(\p m) = (\p+\l)(l_*(x)_\l m),\\&
	r_*(\p x)_\l m = -\l(r_*(x)_\l  m),\\&
	r_*(x)_\l(\p m)= (\p+\l)(r_*(x)_\l m),\\&
\phi(l_*(x)_\l m) = l_*(\a(x))_{\l}\phi(m),\\&
	\phi(r_*(x)_\l m) = r_*(\a(x))_{\l}\phi(m), \\&
	\psi(l_*(x)_\l m) = l_*(\b(x))_\l\psi(m), \\&
	\psi(r_*(x)_\l m) = r_*(\b(x))_\l\psi(m)\\&l_*([\b(x)_\l \a(y)])_{\l+\m}\psi(m) = {l_*(\a\b(x))}_\l l_*(\a(y))_\m m - l_*(\a\b(y))_\m l_*(\a(x))_\l m \\&
	r_*(\b(y))_\m \rho(\b(x))_\l \phi(m) = l_*(\a\b(x))_\l r_*(y)_\m \phi(m)
	- r_*(\a(x) *_\l y)_{\l+\m}\phi\psi(m)
	\end{align} where $[\b(x)_\l \a(y)] = \b(x)*_\l \a(y)-\b(y)*_{-\l-\p}\a(x)$ and $(\rho\circ \b)\phi = (l_*\circ \b)\phi-(r_* \circ \a)\psi.$
\end{defn}
	\begin{prop}
	Let $(B, *_\l, \a, \b)$ be a BiHom-preLie conformal algebra and $(l_*,r_* , \phi, \psi, M)$ be its conformal representation, where $M$ is a $ \mathbb{C}[\p]$-module, $\phi,\psi$ are $\mathbb{C} $-linear maps, satisfying $\p\phi=\phi\p$ and $\p\psi=\psi\p$. Then, the direct sum $B\oplus M$ of vector spaces is turned into a BiHom-preLie conformal algebra by defining $\l$-multiplication $*'_\l$ in $B \oplus M$ as follows
	\begin{equation}
	\begin{aligned}(x_1 +m_1)*'_\l(x_2 + m_2) &:= x_1 *_\l x_2 + (l_*(x_1)_\l{ m_2} + r_*(x_2)_{-\p-\l} m_1),\\
	(\a \oplus \phi)(x  + m) &:= \a(x ) + \phi(m),\\
	(\b \oplus \psi)(x  + m) &:= \b(x) + \psi(m ),
	\end{aligned}
	\end{equation}for all $x,x_1, x_2 \in B,m, m_1, m_2 \in M$ and $\l\in \mathbb{C}$.\end{prop}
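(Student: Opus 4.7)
The plan is to verify that the semi-direct product $(B \oplus M, *'_\l, \a \oplus \phi, \b \oplus \psi)$ satisfies the three defining features of a BiHom-preLie conformal algebra: the $\mathbb{C}[\p]$-module compatibility ($\p$ commuting with the twist maps), the conformal sesqui-linearity of $*'_\l$, and the BiHom-preLie identity recorded in the definition.

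First, I would handle the easy structural data. Since $\a\p=\p\a$, $\b\p=\p\b$ on $B$ and $\phi\p=\p\phi$, $\psi\p=\p\psi$ on $M$, the diagonal maps $\a \oplus \phi$ and $\b \oplus \psi$ automatically commute with $\p$ on $B \oplus M$; commutativity of the two twist maps with each other follows similarly. The sesqui-linearity of $*'_\l$ splits into four pieces when expanding $(\p(x_1+m_1))*'_\l(x_2+m_2)$: two come from sesqui-linearity of $*_\l$ on $B$, and the other two come precisely from the first four sesqui-linearity axioms in the definition of a conformal representation of a BiHom-preLie conformal algebra. The compatibility $\phi\bigl((x_1+m_1)*'_\l(x_2+m_2)\bigr) = (\a \oplus \phi)(x_1+m_1) *'_\l (\a \oplus \phi)(x_2+m_2)$ (and the analogue for $\b \oplus \psi$) reduces similarly to the identities $\phi(l_*(x)_\l m)=l_*(\a(x))_\l \phi(m)$ and $\phi(r_*(x)_\l m)=r_*(\a(x))_\l \phi(m)$, together with multiplicativity of $\a,\b$ on $B$.

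The main work, and the step where the representation axioms are really used, is checking the BiHom-preLie identity
\[
\bigl(\b'(X) *'_\l \a'(Y)\bigr) *'_{\l+\m} \b'(Z) - \a'\b'(X) *'_\l (\a'(Y) *'_\m Z) = (X \leftrightarrow Y),
\]
where $X=x_1+m_1$, $Y=x_2+m_2$, $Z=x_3+m_3$, and $\a'=\a\oplus\phi$, $\b'=\b\oplus\psi$. I would expand both sides and sort the resulting expressions by the number of module components appearing. The part involving only $x_i$'s is killed by the BiHom-preLie identity in $B$. The terms linear in $m_3$ are handled by the $l_*$-axioms, in particular the tenth representation identity $l_*([\b(x)_\l\a(y)])_{\l+\m}\psi(m)= l_*(\a\b(x))_\l l_*(\a(y))_\m m - l_*(\a\b(y))_\m l_*(\a(x))_\l m$. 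The terms linear in $m_1$ are symmetric and reduce to the $r_*$ part of the same identity after using BiHom-skew-symmetry, while the terms linear in $m_2$ are the mixed ones, which match up precisely via the eleventh axiom relating $l_*, r_*$ and $\rho$, using $[\b(x)_\l\a(y)] = \b(x)*_\l\a(y)-\b(y)*_{-\p-\l}\a(x)$ and $(\rho\circ\b)\phi=(l_*\circ\b)\phi-(r_*\circ\a)\psi$.

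The main obstacle will be the bookkeeping in the last step: one must carefully track which representation axiom cancels which grouping of terms once the expansion is sorted by the module component carrying a given index. Once this sorting is done and the appropriate axiom invoked in each batch, the identity reduces to the defining BiHom-preLie identity on $B$ for the purely algebra part and to zero for each module component, completing the verification.
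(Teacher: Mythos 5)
The paper states this proposition without proof, so your write-up is not competing with an argument in the text; judged on its own, the semidirect-product verification you outline is the standard one and its architecture is sound. The structural checks (commutation of $\a\oplus\phi$ and $\b\oplus\psi$ with $\p$, sesquilinearity of $*'_\l$, multiplicativity) do reduce to the first eight representation axioms together with the hypotheses on $B$, and sorting the BiHom-preLie identity by which of $m_1,m_2,m_3$ a term carries is exactly the right decomposition: the pure part is the identity on $B$, and each module component must collapse to one representation axiom. The one correction concerns which axiom closes which component. The $m_3$-component involves only $l_*$ and is precisely the ninth axiom $l_*([\b(x)_\l\a(y)])_{\l+\m}\psi(m)=l_*(\a\b(x))_\l l_*(\a(y))_\m m-l_*(\a\b(y))_\m l_*(\a(x))_\l m$. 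The $m_1$-component, however, is not an ``$r_*$ part'' of that axiom (the ninth axiom contains no $r_*$ at all): expanding it produces $r_*r_*$ and $r_*(\cdot *_\l \cdot)$ terms on one side and $r_*l_*$ and $l_*r_*$ terms on the other, and after grouping $l_*(\b(x_2))\phi(m_1)-r_*(\a(x_2))\psi(m_1)$ into $\rho(\b(x_2))\phi(m_1)$ it is exactly the tenth (mixed) axiom $r_*(\b(y))_\m\rho(\b(x))_\l\phi(m)=l_*(\a\b(x))_\l r_*(y)_\m\phi(m)-r_*(\a(x)*_\l y)_{\l+\m}\phi\psi(m)$. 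The $m_2$-component is then the image of the $m_1$-component under the $(x_1,\l)\leftrightarrow(x_2,\m)$ symmetry of the BiHom-preLie identity and yields the same condition, so no third interaction axiom is needed. With that reassignment (ninth axiom for $m_3$, tenth axiom for both $m_1$ and $m_2$), and careful tracking of the $-\p-\l$ substitutions you already flag as the main bookkeeping burden, the proof closes.
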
We denote this BiHom-preLie conformal algebra by $(B\oplus M, *'_\l, \a+\b, \phi+\psi)$, or simply $(B\ltimes_{l_*,r_*,\a,\b,\phi,\psi}M)$.
	\begin{prop} Consider a regular BiHom-preLie conformal algebra $ (B, *_\l,\a,\b) $ and let $ (l_*, r_*, \phi,\psi, M ) $ be a conformal representation of it in such a way that $ \phi $ is bijective. Let $ (B, [._\l.], \a,\b) $ be the sub-adjacent BiHom-Lie conformal algebra of $(B, *_\l, \a,\b).$ Then $(l_* - (r_* \circ \a\b^{-1})\phi^{-1}\psi, \phi, \psi, M )$ is a conformal representation of BiHom-Lie conformal algebra $ (A, [._\l.], \a,\b) $.
	\end{prop}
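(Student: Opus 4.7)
The plan is to verify each of the five axioms of a conformal representation of a BiHom-Lie conformal algebra (Definition \ref{defrepBLA}) for the operator
\[
\rho(x)_\l m := l_*(x)_\l m - r_*(\a\b^{-1}(x))_\l \phi^{-1}\psi(m).
\]
Throughout we use that $\a,\b,\phi,\psi$ commute pairwise and with $\p$, so composites like $\a\b^{-1}$, $\phi^{-1}\psi$, $\a^{-1}\b$ are well-defined endomorphisms of $M$ commuting with the action of $\p$. The conformal sesqui-linearity axioms and the intertwining properties $\phi\,\rho(x)_\l = \rho(\a(x))_\l\phi$, $\psi\,\rho(x)_\l = \rho(\b(x))_\l\psi$ reduce to the corresponding axioms for $l_*$ and $r_*$; e.g.,
\[
\phi(\rho(x)_\l m) = l_*(\a(x))_\l\phi(m) - r_*(\a^2\b^{-1}(x))_\l\psi(m) = \rho(\a(x))_\l\phi(m),
\]
using $\a\b^{-1}\a = \a^2\b^{-1}$ and $\phi^{-1}\psi\phi = \psi$.

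The crux is the BiHom-Jacobi identity
\[
\rho([\b(x)_\l y])_{\l+\m}\psi(m) = \rho(\a\b(x))_\l\,\rho(y)_\m m - \rho(\b(y))_\m\,\rho(\a(x))_\l m.
\]
The plan is to expand both sides using the definition of $\rho$ and the sub-adjacent bracket $[\b(x)_\l y] = \b(x)*_\l y - \a^{-1}\b(y)*_{-\p-\l}\a(x)$ of Proposition \ref{prop3.2}. The LHS then splits into an $l_*$-piece and an $r_*$-piece, while the RHS splits into four quadratic terms of the types $l_*l_*$, $l_*r_*$, $r_*l_*$ and $r_*r_*$. The $l_*l_*$ terms match the $l_*$-piece of the LHS via the axiom
\[
l_*([\b(x)_\l\a(y)])_{\l+\m}\psi(m) = l_*(\a\b(x))_\l l_*(\a(y))_\m m - l_*(\a\b(y))_\m l_*(\a(x))_\l m
\]
applied with the substitution $y\mapsto \a^{-1}(y)$, which after the commutation $\b\a^{-1}=\a^{-1}\b$ recovers exactly $l_*(\a\b(x))_\l l_*(y)_\m m - l_*(\b(y))_\m l_*(\a(x))_\l m$.

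The remaining $r_*$-terms are handled by two instances of the mixed axiom
\[
r_*(\b(y))_\m \rho(\b(x))_\l \phi(m) = l_*(\a\b(x))_\l r_*(y)_\m \phi(m) - r_*(\a(x) *_\l y)_{\l+\m}\phi\psi(m).
\]
Substituting $y\mapsto \a\b^{-1}(y)$ in this axiom and unfolding $\rho(\b(x))_\l\phi(m) = l_*(\b(x))_\l\phi(m) - r_*(\a(x))_\l\psi(m)$ rewrites $r_*(\a(x)*_\l \a\b^{-1}(y))_{\l+\m}\phi\psi(m)$ as a combination of $l_*r_*$, $r_*l_*$ and $r_*r_*$ quantities; this matches the $r_*$-piece coming from the first summand of $\a\b^{-1}([\b(x)_\l y])$. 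A second application of the same axiom, with $x$ and $y$ swapped and appropriate $\a,\b$-shifts, treats the second summand $\b^{-1}(y)*_{-\p-\l}\a^2\b^{-1}(x)$ and accounts for the symmetric cross-terms. After the frequency shift $m\mapsto \phi^{-1}\psi(m)$ is absorbed throughout, all pieces cancel.

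The main obstacle is the bookkeeping: tracking the shifts by powers of $\a,\b,\phi,\psi$ and their inverses, and carefully handling the second bracket summand, which sits at spectral parameter $-\p-\l$ rather than $\l$. The underlying BiHom-preLie identity enters only implicitly, through the mixed axiom above, which is the unique tie between $l_*$, $r_*$ and $\rho$ and is therefore the decisive tool for converting $r_*$-style expressions on the LHS into the mixed combinations demanded by the RHS.
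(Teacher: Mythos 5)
Your proposal is correct and follows essentially the same route as the paper's proof: the sesqui-linearity and $\phi,\psi$-intertwining axioms are reduced directly to those of $l_*$ and $r_*$, and the Jacobi-type compatibility is verified by expanding $\rho(\a\b(x))_\l\rho(y)_\m m-\rho(\b(y))_\m\rho(\a(x))_\l m$ into four quadratic types, matching the $l_*l_*$ block with the $l_*$-axiom (after the substitution $y\mapsto\a^{-1}(y)$) and absorbing the remaining six terms through two instances of the mixed axiom to produce $-r_*([\a(x)_\l\a\b^{-1}(y)])_{\l+\m}\phi^{-1}\psi^2(m)$. This is exactly the grouping the paper uses, so no further comparison is needed.
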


\begin{proof}To show that $(l_* - (r_* \circ \a\b^{-1})\phi^{-1}\psi, \phi, \psi, M)$ is a conformal representation of a BiHom-Lie conformal algebra $(B, [._\l.], \a,\b) $, we need to satisfy the axioms of Definition \ref{defrepBLA}. Let's check them one by one
	\begin{enumerate}\item First we show that $$ \rho(\p x)_\l m = -\l(\rho(x)_\l  m),$$ so
		\begin{align*}
	 (l_* - (r_* \circ \a\b^{-1})\phi^{-1}\psi)(\p(x))_\l m &=
	l_* (\p(x))_\l m- (r_* \circ \a\b^{-1}(\p(x)))_\l\phi^{-1}\psi (m)
	\\&=-\l(l_* (x)_\l m)+\l (r_* \circ \a\b^{-1}(x))_\l\phi^{-1}\psi (m)\\&=
	-\l((l_* - (r_* \circ \a\b^{-1} )_\l\phi^{-1}\psi)(x))_\l m.
		\end{align*}Similarly, we can show that $$ (l_* - (r_* \circ \a\b^{-1})\phi^{-1}\psi)(x)_\l \p(m)=(\p+\l)((l_* - (r_* \circ \a\b^{-1} )\phi^{-1}\psi)(x))_\l m.$$
		\item Next we show that $$ \phi(\rho(x)_\l m)=\rho(\a(x))_\l \phi(m)$$\begin{align*}
	 \phi (l_{*}(x)- r_{*}(\a\b^{-1}
		(x))\phi^{-1}\psi)_\l m &=\phi (l_{*}(x))_\l m- \phi (r_{*}(\a\b^{-1}
		(x))_\l\phi^{-1}\psi(m))\\&
		= (l_{*}(\a (x)))_\l \phi(m)- (r_{*}( \a\b^{-1}
		\a(x))_\l\phi\phi^{-1}\psi(m)) \\&= (l_{*}(\a (x)))_\l \phi(m)- (r_{*}( \a\b^{-1}
		\a(x))_\l \psi(m)) 
		\\&= (l_{*}- (r_{*}\circ \a\b^{-1} )\phi\psi^{-1})(\a (x))_\l \phi(m).\end{align*} 
		Similarly, we can show that $$ \psi (l_{*}(x)- r_{*}(\a\b^{-1}
		(x))\phi^{-1}\psi)_\l m= (l_{*}- (r_{*}\circ \a\b^{-1} )\phi\psi^{-1})(\b (x))_\l \psi(m).$$
		\item Finally, we show $\rho([\b(x)_\l y])_{\l+\m}\psi(m) = \rho(\a\b(x))_\l \rho(y)_{\m}m -\rho(\b(y))_{\m} (\rho(\a(x))_{\l}m)$, so
			\begin{align*}  
	&(l^*(\a\b(x)) - (r^* \a\b^{-1}(\a\b(x)) \phi^{-1}\psi))_\l (l^*(y) - (r^* \circ \a\b^{-1}(y))\phi^{-1}\psi) _\m (m) \\
	&\quad - (l^*(\b(y)) - (r^*  \a\b^{-1}(\b(y))\circ\phi^{-1}\psi))_\m (l^*(\a(x)) - (r^* \circ \a\b^{-1}(\a(x))\phi^{-1}\psi)_\l (m))\\
	&=l^*(\a\b(x)) _\l (l^*(y)_\m m) - (r^*  \a^2(x))_\l\phi^{-1}\psi (l^*(y)_\m m) \\
	&\quad - l^*(\a\b(x))_\l  (r^*  \a\b^{-1}(y))_\m\phi^{-1}\psi(m) 
	+ (r^*  \a^2(x))_\l \phi^{-1}\psi (r^* \a\b^{-1}(y))_\m \phi^{-1}\psi(m) \\ 
	&\quad - l^*(\b(y))_\m  l^*(\a(x))_\l m + l^*(\b(y))_\m (r^* \circ \a^2\b^{-1}(x))_\l \phi^{-1}\psi(m) \\
	&\quad + (r^* \circ \a(y))_\m\phi^{-1}\psi \circ l^*(\a(x))_\l m - (r^* \circ \a(y))_\m \phi^{-1}\psi \circ (r^* \circ  \a^2\b^{-1}(x))_\l\phi^{-1}\psi(m)\\
	&=l^*(\a\b(x))_\l (  l^*(y)_\m m )- (r^*\circ \a^2(x))_\l ( l^*(\a^{-1}\b(y))_\m\phi^{-1}\psi(m)) \\
	&\quad - l^*(\a\b(x))_\l  ( (r^*\circ \a\b^{-1}(y))_\m\phi^{-1}\psi(m)) + (r^*\circ \a^2(x))_\l  ((r^*(y))_\m \phi^{-2}\psi^2(m)) \\
	&\quad - l^*(\b(y))_\m (l^*(\a(x))_\l m) + l^*(\b(y))_\m((r^*\circ \a^{2}(x))_\l\phi^{-1}\psi(m)) \\
	&\quad + (r^*\circ \a(y))_\m   (l^*(\b(x))_\l\phi^{-1}\psi(m)) - (r^*\circ \a(y))_\m ((r^*\circ \a(x))_\l \phi^{-2}\psi^2(m)) \\ &= \{l^*(\a\b(x))_\l  (l^*(y)_\m m) - l^*(\b(y))_\m   (l^*(\a(x))_\l m)\} \\
	&\quad + \{-l^*(\a\b(x))_\l ((r^*\circ \a\b^{-1}(y))_\m \phi^{-1}\psi(m)) \quad + (r^*\circ \a(y))_\m  ( l^*(\b(x))_\l \phi^{-1}\psi(m)) \\
	&\quad - (r^*\circ \a(y))_\m (  (r^*\circ \a(x))_\l \phi^{-2}\psi^2(m))\} \\
	&\quad - \{(r^*\circ \a^2(x))_\l  (l^*(\a^{-1}\b(y))_\m \phi^{-1}\psi(m))\quad - (r^*\circ  \a^2(x))_\l ( (r^*(y))_\m\phi^{-2}\psi^2(m)) \\
	&\quad - l^*(\b(y))_\m ((r^*\circ  \a^2\b^{-1}(x))_\l \phi^{-1}\psi(m))\}
	\\
	&=l_*([\b(x)_\l y])_{\l+\m}\psi(m)- r_*(\a(x) *_\l \a\b^{-1} (y))_{\l+\m}\phi^{-1}\psi^2 (m) + r_*(y*_{-\p- \l}\a^2\b^{-1}(x))_{\l+ \m}\phi^{-1}\psi^2(m)\\&=l_*([\b(x)_\l y])_{\l+\m}\psi(m)- r_*(\a(x) *_\l \a\b^{-1}(y) - y *_{-\p-\l}\a^2\b^{-1} (x))_{\l+\m}\phi^{-1}\psi^2 (m)\\&= l_*([\b(x)_\l y])_{\l+\m}\psi(m) - r_*([\a(x)_\l \a\b^{-1} (y)])_{\l+\m}\phi^{-1}\psi^2 (m)\\&=\rho([\b(x)_\l y])_{\l+\m}\psi(m).\end{align*}
	\end{enumerate}    
	\end{proof} 
\begin{defn}\cite{AW2}A $5$-tuple $(B, \<_\l, \>_\l, \a, \b)$ equipping a $\mathbb{C}[\p]$-module $B$, bilinear multiplication maps $\<_\l, \>_\l: B \otimes B \to B[\l]$ and commuting linear maps $\a, \b : B \to B$ is said to be a BiHom-dendriform conformal algebra, if the following conditions hold:
	\begin{eqnarray}(\p x)\>_\l y= -\l (x \>_\l y), &x \>_\l (\p y)= (\l+ \p) (x \>_\l y),\\(\p x)\<_\l y= -\l (x\<_\l y), &x\<_\l (\p y)= (\l+ \p) (x \<_\l y),\\
	\a(x \<_\l y) = \a(x) \<_\l \a(y),& \a(x \>_\l y) = \a(x) \>_\l \a(y),\label{D1} \\
	\b(x \<_\l y)= \b(x) \<_\l \b(y), & \b(x \>_\l y) = \b(x) \>_\l \b(y),\label{D2}  \\
	(x \<_\l y) \<_{\l+\m} \b(z) =& \a(x) \<_\l (y \<_\m z + y \>_\m z),\label{D3} \\
	(x \>_\l y) \<_{\l+\m} \b(z) =& \a(x) \>_{\l} (y \<_\m r), \label{D4} \\
	\a(x) \>_\l (y \>_\m z) =& (x \<_\l y + x \>_\l y) \>_{\l+\m} \b(z),\label{D5} \end{eqnarray}
	for all $x,y,z \in B$ and $\l, \m\in \mathbb{C}$.\end{defn}
where \begin{equation*}
  x\cdot_\l y = x \<_\l y + x\>_\l y. 
\end{equation*} 
\begin{lem}\label{lem3.7} The tuple $(B,\cdot_\l= \<_\l+ \>_\l, \a, \b)$ is an BiHom-associative conformal algebra provided that $(B, \<_\l, \>_\l, \a, \b)$ is a BiHom-dendriform conformal algebra.\end{lem}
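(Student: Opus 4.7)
The plan is to verify directly that the candidate operation $\cdot_\lambda = \prec_\lambda + \succ_\lambda$ satisfies all the axioms of a BiHom-associative conformal algebra by expanding everything into the two dendriform halves and using the three associativity axioms (D3), (D4), (D5) to match terms. The bookkeeping is what drives the proof; no clever construction is required.

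First, I would check the easier conditions. Conformal sesqui-linearity for $\cdot_\lambda$ follows by adding the sesqui-linearity identities separately for $\prec_\lambda$ and $\succ_\lambda$, since $(\partial x)\cdot_\lambda y=(\partial x)\prec_\lambda y+(\partial x)\succ_\lambda y=-\lambda(x\prec_\lambda y)-\lambda(x\succ_\lambda y)=-\lambda(x\cdot_\lambda y)$, and analogously on the second slot. Multiplicativity of $\alpha$ and $\beta$ with respect to $\cdot_\lambda$ is immediate from identities \eqref{D1} and \eqref{D2}, each applied to the two summands. Commutativity of $\alpha$ with $\beta$, and their commutation with $\partial$, are part of the hypotheses of the dendriform structure and carry over for free.

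The main computation is the BiHom-associative identity $\alpha(x)\cdot_\lambda(y\cdot_\mu z)=(x\cdot_\lambda y)\cdot_{\lambda+\mu}\beta(z)$. I would expand each side by linearity of $\prec$ and $\succ$ into four summands. The left-hand side becomes
\[
\alpha(x)\prec_\lambda(y\prec_\mu z)+\alpha(x)\prec_\lambda(y\succ_\mu z)+\alpha(x)\succ_\lambda(y\prec_\mu z)+\alpha(x)\succ_\lambda(y\succ_\mu z),
\]
and the right-hand side becomes $(x\prec_\lambda y)\prec_{\lambda+\mu}\beta(z)+(x\prec_\lambda y)\succ_{\lambda+\mu}\beta(z)+(x\succ_\lambda y)\prec_{\lambda+\mu}\beta(z)+(x\succ_\lambda y)\succ_{\lambda+\mu}\beta(z)$. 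The strategy is then to group: by \eqref{D3} the first two left-hand summands collapse to $(x\prec_\lambda y)\prec_{\lambda+\mu}\beta(z)$; by \eqref{D4} the third left-hand summand equals $(x\succ_\lambda y)\prec_{\lambda+\mu}\beta(z)$; and by \eqref{D5} the fourth left-hand summand equals $(x\prec_\lambda y)\succ_{\lambda+\mu}\beta(z)+(x\succ_\lambda y)\succ_{\lambda+\mu}\beta(z)$. Summing these three equalities gives exactly the right-hand side.

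There is no genuine obstacle here: each of the three dendriform splitting axioms \eqref{D3}--\eqref{D5} is tailored so that a specific pair of mixed products on the left agrees with a specific pair on the right, and the full BiHom-associativity is recovered precisely when all four pieces are assembled. The only care needed is to keep track of the conformal parameters $\lambda$, $\mu$, and $\lambda+\mu$ consistently across (D3), (D4), (D5) so that both sides have matching labels after the regrouping, which is automatic from the way the axioms are written.
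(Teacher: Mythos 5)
Your proof is correct; the paper states Lemma \ref{lem3.7} without proof, and your argument is exactly the canonical verification: sesquilinearity and multiplicativity are additive in the two halves, and the BiHom-associativity of $\cdot_\l=\prec_\l+\succ_\l$ follows by expanding both sides into four mixed products and matching them via \eqref{D3}, \eqref{D4}, \eqref{D5}. One small point worth flagging: axiom \eqref{D4} in the paper contains a typo ($y\prec_\mu r$ should read $y\prec_\mu z$), which you have implicitly and correctly read as $z$.
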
 Now we introduce the conformal representation of BiHom-dendriform conformal algebra.
\begin{defn}
	Let $(B,\<_\l,\>_\l, \a,\b)$ be a BiHom-dendriform conformal algebra, and $ M $ be a vector space. Let $l_\<, r_\<, l_\>, r_\>: B \to gc(M)$ are four $ \mathbb{C} $-linear maps  and $\phi,\psi : M\to M$ be six linear maps. Then the tuple $(l_\<, r_\<, l_\>, r_\>, \phi,\psi, M)$ is called a conformal representation of $B$ if the following equations hold for any $x, y\in B$, $m \in M$ and $\l,\m\in \mathbb{C}$:
	\begin{align}
	&l_{\<}(x \<_\l y)_{\l+\m}\phi(m) = l_\<(\a(x))_\l (l_\cdot(y)_\m m), \\&
	r_\<(\b(x))_\l (l_\<(y)_\m m) = l_\<(\a(y))_\m (r_\cdot(x)_\l m), \\&
	r_\<(\b(x))_\l (r_\<(y)_\m m) = r_\<(x \cdot_\l y)_{\l+\m}\phi(m), \\&
	l_\<(x \>_\l y)_{\l+\m}\psi(m) = l_\>(\a(x))_\l (l_\<(y)_\m m), \\&
	r_\<(\b(x))_\l( l_\>(y)_\m m) = l_\>(\a(y))_\m (r_\<(x)_\l m), \\&
	r_\<(\b(x))_\l (r_\>(y)_\m m) = r_\>(y \<_{\m=-\p-\l} x)_{\l+\m} \phi(m), \\&
	l_\>(x \cdot_\l y)_{\l+\m}\psi(m) = l_\>(\a(x))_\l (l_\>(y)_\m m), \\&
	r_\>(\b(x))_\l( l_\cdot(y)_\m m) = l_\>(\a(y))_\m (r_\>(x)_\l m), \\&
	r_\>(\b(x))_\l (r_\cdot(y)_\m m) = r_\>(y \>_\m  x)_{\l+\m}\phi(m),\\&
	\phi(l_\<(x)_\l m) = l_\<(\a(x))_\l \phi(m),  \\&
	\phi(r_\<(x)_\l m) = r_\<(\a(x))_\l\phi(m),  \\&
	\psi(l_\<(x)_\l m) = l_\<(\b(x))_\l \psi(m), \\&
	\psi(r_\<(x)_\l m) = r_\<(\b(x))_\l \psi(m),  \\&
	\phi(l_\>(x)_\l m) = l_\>(\a(x))_\l \phi(m), \\&
	\phi(r_\>(x)_\l m) = r_\>(\a(x))_\l \phi(m),  \\&
	\psi(l_\>(x)_\l m) = l_\>(\b(x))_\l \psi(m), \\&
	\psi(r_\>(x)_\l m) = r_\>(\b(x))_\l \psi(m)
	\end{align} where $ x \cdot_\l y = x \<_\l y + x \>_\l y, l_\cdot = l_\< + l_\>$  and  $r_\cdot = r_\< + r_\>. $
\end{defn}
\begin{prop}
	Let $(l_\<,r_\< ,l_\>,r_\>,  \phi, \psi, M)$ be a conformal-representation of a BiHom-dendriform conformal algebra $(B, \<_\l,\>_\l, \a, \b)$, where $M$ is a $ \mathbb{C}[\p] $- module, $\phi,\psi$ are  $ \mathbb{C} $-linear maps, satisfying $\p\phi=\phi\p$ and $\p\psi=\psi\p$. Then, the direct sum $B\oplus M$ of vector spaces is turned into a BiHom-dendriform conformal algebra by defining $\l$-multiplication $\<'_\l$ and $ \>'_\l $ in $B \oplus M$ as follows
	\begin{equation}
	\begin{aligned}(x_1 +m_1)\<'_\l(x_2 + m_2) &:= x_1 \<_\l x_2 + (l_\<(x_1)_\l{ m_2} + r_\<(x_2)_{-\p-\l} m_1),\\(x_1 +m_1)\>'_\l(x_2 + m_2) &:= x_1 \>_\l x_2 + (l_\>(x_1)_\l{ m_2} + r_\>(x_2)_{-\p-\l} m_1),\\
	(\a \oplus \phi)(x  + m) &:= \a(x ) + \phi(m),\\
	(\b \oplus \psi)(x  + m) &:= \b(x) + \psi(m ),
	\end{aligned}
	\end{equation}for all $x,x_1, x_2 \in B,m, m_1, m_2 \in M$ and $\l\in \mathbb{C}$.
\end{prop}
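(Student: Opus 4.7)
The plan is to verify in sequence that $(B \oplus M, \<'_\l, \>'_\l, \a \oplus \phi, \b \oplus \psi)$ satisfies each of the defining axioms of a BiHom-dendriform conformal algebra, using the corresponding axiom in $B$ on the first coordinate and the appropriate representation identity on the second coordinate. Throughout, a typical element is written $u_i = x_i + m_i$ with $x_i \in B$ and $m_i \in M$, and both new products are split componentwise into a ``$B$-part'' and an ``$M$-part''.

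First I would dispatch the easy axioms: conformal sesqui-linearity of $\<'_\l$ and $\>'_\l$ follows immediately from sesqui-linearity of $\<_\l, \>_\l$ together with the sesqui-linearity identities satisfied by $l_\<, r_\<, l_\>, r_\>$ that appear in the definition of a representation. The multiplicativity relations, i.e.\ the analogues of \eqref{D1}--\eqref{D2}, reduce on the $B$-side to multiplicativity of $\a, \b$ with respect to $\<_\l, \>_\l$, and on the $M$-side to the eight commutation equations $\phi\circ l_\< = l_\<\circ\a \cdot \phi$, etc., listed at the end of the representation definition.

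Next I would verify the three compatibility identities \eqref{D3}, \eqref{D4}, \eqref{D5}. For each one, I would expand both sides of the equation, for instance
\[
(u_1 \<'_\l u_2) \<'_{\l+\m} (\b \oplus \psi)(u_3) \quad\text{vs.}\quad (\a \oplus \phi)(u_1) \<'_\l (u_2 \<'_\m u_3 + u_2 \>'_\m u_3),
\]
and split the result into the $B$-component and the $M$-component. The $B$-component is exactly \eqref{D3} in $B$ and so cancels. The $M$-component, after collecting terms, decomposes into three natural bilinear pieces: one involving $m_3$ only (left actions on $m_3$), one involving $m_2$ only (a ``mixed'' left--right piece on $m_2$), and one involving $m_1$ only (right actions on $m_1$). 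Each of these three pieces is precisely one of the representation identities listed in the definition; for \eqref{D3} one uses the three equations with $l_\<(x \<_\l y)$, $r_\<(\b(x))_\l l_\<(y)_\m m$, and $r_\<(\b(x))_\l r_\<(y)_\m m = r_\<(x\cdot_\l y)_{\l+\m}\phi(m)$. The analogous organization handles \eqref{D4} using the three identities involving $l_\<(x\>_\l y)$, $r_\<(\b(x))_\l l_\>(y)_\m m$, $r_\<(\b(x))_\l r_\>(y)_\m m$, and \eqref{D5} using the three identities involving $l_\>(x\cdot_\l y)$, $r_\>(\b(x))_\l l_\cdot(y)_\m m$, $r_\>(\b(x))_\l r_\cdot(y)_\m m$.

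The routine work is all in the bookkeeping for these three compatibility identities, and the main obstacle is purely notational: one has to be careful that the right-action arguments $-\p-\l$ get shifted correctly when a product like $u \<'_\l v$ is fed into another $\<'$ or $\>'$, so that the parameters match those appearing in the representation axioms (in particular matching the ``$\m = -\p - \l$'' substitution that shows up in the identity for $r_\<(\b(x))_\l (r_\>(y)_\m m)$). Once the parameter bookkeeping is fixed, every term on the $M$-side is covered by exactly one representation axiom, and the proof closes.
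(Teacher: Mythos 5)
Your proposal is correct and is exactly the intended argument: the paper in fact states this proposition without proof, but the componentwise verification you describe — $B$-part handled by the dendriform axioms of $B$, $M$-part splitting into three pieces each matched by one of the nine representation identities, plus the $\phi,\psi$-commutation relations for multiplicativity — is precisely the technique the paper uses for the analogous semidirect-product propositions (e.g.\ for BiHom-pre-Poisson conformal bimodules). The only caveat is that the paper's list of representation axioms contains some $\phi$/$\psi$ inconsistencies (e.g.\ $l_\<(x\<_\l y)_{\l+\m}\phi(m)$ where the matching term in the expansion of the analogue of the first compatibility identity produces $\psi(m_3)$), so in carrying out your bookkeeping you would need to read those axioms with the twists corrected; this is a defect of the paper's definition, not of your argument.
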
 	We denote this BiHom-dendriform conformal algebra by $(B\oplus M,\<'_\l, \>'_\l, \a+\b, \phi+\psi)$, or simply $(B\ltimes_{l_\<,r_\< ,l_\>,r_\>,\a,\b,\phi,\psi}M)$. 
\begin{prop} Let $(l_\<,r_\< ,l_\>,r_\>, \phi, \psi, M)$ be a conformal bimodule of BiHom-dendriform conformal algebra $(B, \<_\l,\>_\l, \a, \b)$. Let $ (B, \cdot_\l =\<_\l + \>_\l, \a,\b) $ be the associative conformal algebra. Then $(l_\<+ l_\>, r_\< +r_\>,  \phi, \psi, M)$ is a conformal bimodule of $ (B, \cdot_\l =\<_\l + \>_\l, \a,\b) $.
\end{prop}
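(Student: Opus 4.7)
The plan is to unpack the three nontrivial axioms defining a BiHom-associative conformal bimodule (Definition \ref{def2.6}), namely
\begin{align*}
l(x *_\l y)_{\l+\m}\psi(m) &= l(\a(x))_\l (l(y)_\m m),\\
r(x *_\l y)_{\l+\m}\phi(m) &= r(\b(y))_\l (r(x)_\m m),\\
l(\a(x))_\l (r(y)_\m m) &= r(\b(y))_\m (l(x)_\l m),
\end{align*}
with $l=l_\<+l_\>$, $r=r_\<+r_\>$ and $x\cdot_\l y = x\<_\l y + x\>_\l y$, and to verify each of them by summing the appropriate equations of the BiHom-dendriform bimodule axioms. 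The sesqui-linearity axioms for $l$ and $r$, as well as the commutation relations with $\phi$ and $\psi$, are inherited termwise from the corresponding axioms for $l_\<, l_\>, r_\<, r_\>$, so these require no computation.

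First I would handle the left-action axiom. Expanding $l(x\cdot_\l y) = l_\<(x\<_\l y)+l_\<(x\>_\l y)+l_\>(x\<_\l y)+l_\>(x\>_\l y)$, the first summand is rewritten by the dendriform equation $l_\<(x\<_\l y)_{\l+\m}\phi(m)=l_\<(\a(x))_\l(l_\cdot(y)_\m m)$, the second by $l_\<(x\>_\l y)_{\l+\m}\psi(m)=l_\>(\a(x))_\l(l_\<(y)_\m m)$, and the sum $l_\>(x\<_\l y)+l_\>(x\>_\l y)=l_\>(x\cdot_\l y)$ by $l_\>(x\cdot_\l y)_{\l+\m}\psi(m)=l_\>(\a(x))_\l(l_\>(y)_\m m)$. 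Adding these three gives $l_\<(\a(x))_\l(l_\<(y)+l_\>(y))_\m m + l_\>(\a(x))_\l l_\<(y)_\m m + l_\>(\a(x))_\l l_\>(y)_\m m$, which collects into $(l_\<+l_\>)(\a(x))_\l((l_\<+l_\>)(y)_\m m) = l(\a(x))_\l(l(y)_\m m)$, as required.

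The right-action axiom is handled symmetrically, by expanding $r(x\cdot_\l y)$ as a sum of four terms and applying the three dendriform relations of the form $r_\<(\b(x))_\l(r_\<(y)_\m m)=r_\<(x\cdot_\l y)_{\l+\m}\phi(m)$, $r_\<(\b(x))_\l(r_\>(y)_\m m)=r_\>(y\<_{-\p-\l}x)_{\l+\m}\phi(m)$, and $r_\>(\b(x))_\l(r_\cdot(y)_\m m)=r_\>(y\>_\m x)_{\l+\m}\phi(m)$, then regrouping. The mixed (left--right commutativity) axiom is obtained analogously by summing the three cross relations $r_\<(\b(x))_\l(l_\<(y)_\m m)=l_\<(\a(y))_\m(r_\cdot(x)_\l m)$, $r_\<(\b(x))_\l(l_\>(y)_\m m)=l_\>(\a(y))_\m(r_\<(x)_\l m)$, and $r_\>(\b(x))_\l(l_\cdot(y)_\m m)=l_\>(\a(y))_\m(r_\>(x)_\l m)$, and then collecting according to $l=l_\<+l_\>$ and $r=r_\<+r_\>$.

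The main obstacle is purely bookkeeping: there are sixteen cross-terms arising from the four bilinear combinations of $(l_\<,l_\>)$ or $(r_\<,r_\>)$ on one side and $(\<_\l,\>_\l)$ on the other, and one must ensure that every term produced by expanding the left-hand sides is matched, without leftover, by exactly one term on the right-hand side after invoking a dendriform axiom. Aside from this pairing care, the argument is a direct summation and requires no further new idea, paralleling the algebra-level reduction recorded in Lemma \ref{lem3.7}.
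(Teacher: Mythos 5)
Your proposal is correct and follows essentially the same route as the paper: the paper also proves the left-action identity by splitting $l(x\cdot_\l y)$ into $l_\<(x\<_\l y)+l_\<(x\>_\l y)+l_\>(x\cdot_\l y)$ and applying exactly the three dendriform bimodule relations you cite, then dismisses the remaining axioms with ``other cases can be proved similarly.'' Your sketch of the right-action and mixed axioms simply makes explicit the bookkeeping the paper leaves to the reader.
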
\begin{proof} Let us proof the $10$th identity in Definition \ref{def2.6}, other cases can be proved similarly,
	\begin{align*}&(l_\<+ l_\>)(x \cdot_\l y)_{\l+\m}\psi(m)\\&
	=(l_\<+ l_\>)(x \<_\l y + x \>_\l y)_{\l+\m}\psi(m)\\&
	=l_\<(x \<_\l y)_{\l+\m}\psi(m) + l_\<(x \>_\l y)_{\l+\m}\psi(m) + l_\>(x \cdot_\l  y)_{\l+\m}\psi(m)\\&
	={l_\<(\a(x))}_\l {(l_\<+ l_\>)(y)}_\m m + {l_\>(\a(x))}_\l {{l_\<(y)}_\m m }+ {l_\>(\a(x))}_\l {{l_\>(y)}_\m m}\\&
	=l_\<(\a(x))_\l ((l_\<+ l_\>)(y)_\m m) + l_\>(\a(x))_\l ((l_\<+ l_\>)(y)_\m m)\\&
	=(l_\<+ l_\>)(\a(x))_\l ((l_\<+ l_\>)(y)_\m) m.\end{align*}\end{proof}Now we introduce the notion of a BiHom-pre-Poisson conformal algebra and give some important results.\begin{defn}
	A non-commutative BiHom-pre-Poisson conformal algebra is a $ 6 $-tuple $(B, \<_\l, \>_\l, *_\l,  \a, \b)$
	such that $(B, \<_\l, \>_\l, \a, \b)$ is a BiHom-dendriform algebra and $(B,  *_\l,  \a, \b)$ is a BiHom-preLie conformal algebra satisfying the following compatibility conditions:
	\begin{eqnarray}\label{eq35}
	\begin{aligned}
	(\b(x) *_\l \a(y) - \b(y) *_{-\p-\l} \a(x)) \<_{\l+\m} \b(z) = \a\b(x) *_\l (\a(y) \<_\m z) - \a\b(y) \<_\m (\a(x) *_\l z),\\
	\b(x) \>_\l (\a\b(y) *_\m \a(z) - \b(z) *_{-\p-\m}\a^2(y)) = \a\b^2(y) *_\m (x \>_\l\a(z)) - (\b^2
	(y) *_{\m} x) \>_{\l+\m} \a\b(z),\\
	(\b(x) \<_\l \a(y) + \b(x) \>_\l \a(y)) *_{\l+\m} \b(z) = (\b(x) *_\l\a(z)) \>_{-\p-\m} \b(y) +\a\b(x) \<_\l (\a(y) *_{\m} z)
	\end{aligned}
	\end{eqnarray}
\end{defn}
\begin{thm}
	Let $ (B, \<_\l,\>_\l, *_\l) $ be a pre-Poisson conformal algebra and $\a,\b\in End(B)$ be two commuting morphisms of $B$. Then $B_{\a,\b}:= (B, \<_{\a,\b} =\<\circ (\a\otimes \b),\>_{\a,\b} =\>\otimes
	\circ(\a\otimes \b), *_{\a,\b} =*\circ (\a\otimes \b), \a,\b)$ is a  BiHom-pre-Poisson conformal algebra, known as the Yau twist of $B$. Moreover, assume that there is another such BiHom-pre-Poisson conformal algebra $B'_{\a,\b}:= (B', \<'_{\a,\b} =\<'
	\circ(\a'\otimes \b'),\>'_{\a',\b'} =\>'
	\circ(\a'\otimes \b'), *'_{\a',\b'} =*'\circ (\a'\otimes \b'), \a',\b')$ generated from the pre-Poisson conformal algebra $ B' $ in the presence of structure maps $ \a'\b' $ and multiplication maps $\<,\>, *$. Assume that $f : B \to B'$
	be a pre-Poisson conformal algebra morphism that satisfiy $f\circ \a'=\a \circ f$ ,$f\circ \b'=\b \circ f$ Then $ f : B_{\a,\b} \to B_{\a',\b'}$ is a BiHom-pre-Poisson conformal algebra morphism.
\end{thm}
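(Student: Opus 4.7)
The plan is to carry out a Yau twist verification. Every new product has the form $\mu_{\a,\b}(x,y) = \mu(\a(x),\b(y))$, and by assumption $\a, \b$ commute with each other and with $\p$, and are morphisms for each of $\<_\l, \>_\l, *_\l$. Consequently, whenever $\a$ or $\b$ is applied to a twisted product, it distributes through by multiplicativity, and any accumulated power $\a^i\b^j$ on an argument can be freely rearranged. The general recipe is therefore: expand each BiHom axiom on $B_{\a,\b}$ in terms of the underlying operations, collect all powers of $\a$ and $\b$, and reduce to the corresponding un-twisted pre-Poisson identity evaluated at $\a^i\b^j$-twisted arguments.

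The verification breaks into four blocks. First, conformal sesqui-linearity of $\<_{\a,\b}, \>_{\a,\b}, *_{\a,\b}$ and the multiplicativity of $\a,\b$ with respect to these twisted products follow immediately from $\a\p = \p\a$, $\b\p = \p\b$, $\a\b = \b\a$, and multiplicativity of $\a,\b$ on the original operations. Second, the three BiHom-dendriform identities \eqref{D3}--\eqref{D5} for $(B, \<_{\a,\b}, \>_{\a,\b}, \a, \b)$ and the BiHom-preLie identity for $(B, *_{\a,\b}, \a, \b)$ are verified by expanding the twisted products, pulling the outer $\a$ and $\b$ inside by multiplicativity, and matching both sides against the corresponding identities of the original dendriform and pre-Lie structures on $B$.

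The third and most intricate block consists of the three compatibility conditions \eqref{eq35}. For each condition, one side mixes $*$ with $\<$ or $\>$, so after expansion one obtains expressions in which factors of $\a, \b$ occur at several depths. The task is to use $\a\b = \b\a$ and multiplicativity to align the exponents appearing on each argument, so that both sides reduce to the un-twisted pre-Poisson compatibility identity applied to $\a^i\b^j$-images of $x,y,z$. The main obstacle is purely combinatorial bookkeeping: keeping careful track of how many powers of $\a$ and $\b$ land on each argument at each step, and this is precisely where commutativity $\a\b = \b\a$ becomes indispensable.

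Finally, the morphism assertion is direct. Given $f \a = \a' f$, $f \b = \b' f$, and $f$ preserving $\<_\l, \>_\l, *_\l$, one computes
\begin{equation*}
f(x \<_{\a,\b,\l} y) = f(\a(x) \<_\l \b(y)) = \a'(f(x)) \<'_\l \b'(f(y)) = f(x) \<'_{\a',\b',\l} f(y),
\end{equation*}
and analogously for $\>_{\a,\b}$ and $*_{\a,\b}$, so $f : B_{\a,\b} \to B'_{\a',\b'}$ is a morphism of BiHom-pre-Poisson conformal algebras.
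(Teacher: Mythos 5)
Your proposal is correct and follows essentially the same route as the paper: the paper likewise verifies the Yau-twisted structure by expanding the twisted products, using multiplicativity and $\a\b=\b\a$ to collect powers of $\a,\b$ on each argument until the identity reduces to the untwisted pre-Poisson compatibility condition (it carries this out explicitly only for the first relation of \eqref{eq35}, declaring the rest analogous), and then proves the morphism claim by the same one-line computation $f(\a(x)\<_\l\b(y))=\a'f(x)\<'_\l\b'f(y)$ that you give.
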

\begin{proof} We shall only prove first relation in the Eq. (\ref{eq35}), the others being proved analogously. Then, for
	any $ x, y, z\in B$ and $\l,\m\in \mathbb{C}$
	\begin{eqnarray}\begin{aligned}
	&(\b(x)*^{\a,\b}_\l \a(y) - \b(y)*^{\a,\b}_{-\p-\l} \a(x)) \<^{\a,\b}_{\l+\m} \b(z)
	\\&=(\a\b(x) *_\l \a\b(y) - \a\b(y) *_{-\p-\l} \a\b(x)) \<^{\a,\b}_{\l+\m}\b(z)
	\\&=(\a^2\b(x) *_{\l}{\a^2\b}(y) - \a^2\b(y) *_{-\p-\l}\a^2\b(x)) \<_{\l+\m} \b^2(z)
	\\& =\a^2\b(x) *_\l (\a^2\b(y) \<_\m \b^2(z))-\a^2\b(y) \<_\m (\a^2\b(x) *_\l\a^2(z))
	\\& =\a\b(x) *_\l^{\a,\b}(\a(y) \<_\m^{\a,\b}z) - \a\b(y) \<_\m^{\a,\b}(\a(x) *_{\l}^{\a,\b}z)\\&=
	\a\b(x) *_\l^{\a,\b} (\a(y) \<_\m^{\a,\b} z) - \a\b(y) \<_\m^{\a,\b} (\a(x) *_\l^{\a,\b}z).	
	\end{aligned}
	\end{eqnarray}
	For the second assertion, we have
	\begin{eqnarray}
	\begin{aligned}
	f(x \<_\l^{\a,\b}y) &=f(\a(x) \<_\l \b(y))\\&
	=f(\a(x)) \<'_\l f(\b(y))\\&
	=\a' f(x) \<'_\l\b' f(y)\\&
	=f(x) \<_\l^{'\a,\b}f(y).
	\end{aligned}
	\end{eqnarray}Similarly, we have 	$ f(x \<_\l^{\a,\b}y) =f(x) \<^{'\a,\b}_\l f(y). $ and $ f(x*_\l^{\a,\b} y) = f(x)*_\l^{'\a,\b}f(y)$. This completes the proof.\end{proof}
\begin{prop}\label{prop3.13}
	More generally, let $ (B,\<_\l,\>_\l, *_\l, \a, \b) $ be a commutative BiHom-pre-Poisson conformal algebra and $\a'\b': B\to B$ be a two noncommutative BiHom-pre-Poisson algebra morphisms such that any two of the maps $\a,\b,\a',\b'$ commute. Then $(B, \<^{\a,\b}_\l, \>^{\a,\b}_\l, *^{\a,\b}_\l, \a\circ\a', \b\circ b')$ is a noncommutative BiHom-pre-Poisson conformal algebra.
\end{prop}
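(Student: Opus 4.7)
The plan is to generalize the Yau-twist construction of the previous theorem: rather than twisting a pre-Poisson conformal algebra (whose structure maps are trivial), I would twist a BiHom-pre-Poisson conformal algebra whose structure maps $\a,\b$ are already nontrivial, using the additional morphisms $\a',\b'$. Concretely, I read the twisted operations as
$$x \<^{\a',\b'}_\l y := \a'(x) \<_\l \b'(y), \qquad x \>^{\a',\b'}_\l y := \a'(x) \>_\l \b'(y), \qquad x *^{\a',\b'}_\l y := \a'(x) *_\l \b'(y),$$
with new structure maps $\a\circ\a'$ and $\b\circ\b'$. The superscript $\<^{\a,\b}_\l$ in the statement I read as a minor notational slip for $\<^{\a',\b'}_\l$, since $\a,\b$ are already the original structure maps of $B$ and only $\a',\b'$ can meaningfully twist them.

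First I would check that $(B,\<^{\a',\b'}_\l,\>^{\a',\b'}_\l,\a\a',\b\b')$ is a BiHom-dendriform conformal algebra. Conformal sesqui-linearity for the new products is immediate from that of $\<_\l,\>_\l$. Multiplicativity of $\a\a',\b\b'$ against the new products reduces to multiplicativity of $\a,\b$ against $\<_\l,\>_\l$ (axioms \eqref{D1}--\eqref{D2}), the morphism property of $\a',\b'$, and pairwise commutativity of $\a,\b,\a',\b'$. For axioms \eqref{D3}--\eqref{D5}, I would expand each side using the definitions, pull the primed maps outside using that they are morphisms, align the $\a,\b$-powers using commutativity, and then invoke the corresponding untwisted axiom of $(B,\<_\l,\>_\l,\a,\b)$. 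The BiHom-preLie identity for $(B,*^{\a',\b'}_\l,\a\a',\b\b')$ follows by the same template.

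Third, and this is the main work, I would verify the three compatibility identities of \eqref{eq35} one by one. Taking the first as a model, the left-hand side
$\bigl(\b\b'(x)*^{\a',\b'}_\l\a\a'(y)-\b\b'(y)*^{\a',\b'}_{-\p-\l}\a\a'(x)\bigr)\<^{\a',\b'}_{\l+\m}\b\b'(z)$
is expanded via the definitions and the morphism property of $\a',\b'$ into an expression involving only the original $*_\l,\<_\l$ with various powers of $\a,\b,\a',\b'$ acting on $x,y,z$; the right-hand side is expanded analogously. Pairwise commutativity then allows both sides to be matched to the two sides of the original first compatibility identity of Eq.~\eqref{eq35} in $B$, applied to suitable twisted arguments. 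The second and third compatibility identities are handled identically.

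The main obstacle I anticipate is purely bookkeeping: tracking the precise powers of $\a,\b,\a',\b'$ that land on each of $x,y,z$ on both sides and confirming that they align so that the original compatibility identity can be invoked. No new algebraic identity beyond those assumed (morphism property, pairwise commutativity of the four maps, and the original BiHom-pre-Poisson axioms) is required, so once a consistent bookkeeping convention is fixed the reductions are mechanical and parallel to the proof of the preceding theorem.
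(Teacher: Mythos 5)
Your proposal is correct and follows exactly the Yau-twist template of the preceding theorem (expand the twisted operations, use the morphism property of $\a',\b'$ and pairwise commutativity to push all primed maps onto the arguments, then invoke the corresponding untwisted axiom); the paper in fact states this proposition without proof, implicitly relying on the same argument. Your reading of the garbled notation ($\<^{\a',\b'}_\l=\<_\l\circ(\a'\otimes\b')$, structure maps $\a\a',\b\b'$) is the intended one, and the bookkeeping does close up: for instance the first compatibility identity reduces to the original one evaluated at $\a'^2\b'(x),\a'^2\b'(y),\b'^2(z)$.
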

\begin{cor}
	Let $ (B,\<_\l,\>_\l, *_\l, \a, \b) $ be a noncommutative BiHom-pre-Poisson conformal algebra
	and $m \in N/\{0\}$, Then
	\begin{enumerate}
		\item The $mth$ derived noncommutative BiHom-pre-Poisson algebra of type $ 1 $ of $ B $ is defined
		by $$ B^m_1 = (B, \<_{\l}^{(m)} =\<_{\l}\circ (\a^m \otimes \b^m), \>_{\l}^{(m)}=\>_{\l} \circ(\a^m \otimes \b^m), *_{\l}^{(m)} = *_\l \circ (\a^m \otimes \b^m), \a^{m+1}, \b^{m+1}). $$
		\item The $mth $ derived  BiHom-pre-Poisson conformal algebra of type $ 2 $ of $B$ is defined
		by $$ B^m_2 = (B, \<_\l^{(2^m-1)}=\<_\l\circ (\a^m \otimes \b^m), \>_\l^{(2^m-1)}=\>_\l \circ(\a^{2^m-1}\otimes \b^{2^m-1}), *_\l^{(2^m-1)} = *_\l \circ (\a^m\otimes \b^m), \a^{2m}, \b^{2m}). $$
	\end{enumerate}
\end{cor}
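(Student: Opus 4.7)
The plan is to derive both parts directly from Proposition~\ref{prop3.13}, thereby avoiding any ground-up verification of the BiHom axioms. The key observation I would start from is that since $\a$ and $\b$ are commuting multiplicative maps preserving the operations $\<_\l, \>_\l, *_\l$, each of their powers $\a^m$ and $\b^m$ is itself a BiHom-pre-Poisson conformal algebra morphism of $B$, and any two such powers commute with each other and with $\a,\b$. This is precisely the ingredient needed to invoke Proposition~\ref{prop3.13}.

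For part (1), I would apply Proposition~\ref{prop3.13} with $\a':=\a^m$ and $\b':=\b^m$. The commutativity hypothesis is automatic, and the conclusion produces exactly the $6$-tuple $B_1^m$, since $\a\circ\a^m=\a^{m+1}$, $\b\circ\b^m=\b^{m+1}$, and the twisted operations are of the form $(\text{op})\circ(\a^m\otimes\b^m)$ required by the statement.

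For part (2), I would argue by induction on $m$, with the base case $m=1$ identical to part (1). For the inductive step I would treat $B_2^{m-1}$ as a fresh BiHom-pre-Poisson conformal algebra, whose structural maps are commuting powers of $\a$ and $\b$, and apply Proposition~\ref{prop3.13} once more with twisting morphisms equal to those structural maps. The commutativity hypothesis is again automatic, since everything is built out of the commuting pair $\a,\b$. This produces $B_2^m$, with the twisting exponents and the structural-map exponents doubling at each step.

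The main obstacle will be bookkeeping rather than algebra: no identity from the BiHom-dendriform, BiHom-preLie, or compatibility relations~\eqref{eq35} needs to be checked directly, since each is inherited from $B$ through Proposition~\ref{prop3.13}. What does require care is confirming that the iterated-twist recursion yields the exponents advertised in the statement, which comes down to the elementary recurrences $2\cdot 2^{m-1}-1=2^m-1$ for the twisting exponent and $2\cdot 2^{m-1}=2^m$ for the structural-map exponent. Any residual mismatch with the written exponents in the statement I would read as a matter of typographical normalization rather than a genuine mathematical obstacle.
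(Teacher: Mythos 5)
Your proposal is correct and rests on the same single ingredient as the paper: the paper's entire proof is ``Apply Proposition~\ref{prop3.13} with $\a'=\a^m,\ \b'=\b^m$ and $\a'=\a^{2m-1},\ \b'=\b^{2m-1}$ respectively,'' so for part (1) your argument is identical. For part (2) you diverge slightly: the paper makes one further direct application of Proposition~\ref{prop3.13} with $\a'=\a^{2m-1}$, $\b'=\b^{2m-1}$ (yielding twist exponent $2m-1$ and structure maps $\a^{2m},\b^{2m}$), whereas you iterate the construction inductively, doubling at each step, which yields twist exponent $2^m-1$ and structure maps $\a^{2^m},\b^{2^m}$. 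This discrepancy is not a flaw in your argument but a reflection of an internal inconsistency in the statement itself, whose superscript labels read $(2^m-1)$ while the displayed structure maps read $\a^{2m},\b^{2m}$ and the displayed twists mix $\a^m\otimes\b^m$ with $\a^{2^m-1}\otimes\b^{2^m-1}$; your inductive reading matches the standard ``type~2 derived algebra'' convention that the labels suggest, while the paper's one-line proof matches the $2m$ reading. The only point you should make explicit in the inductive step is that $\a^{2^{m-1}}$ and $\b^{2^{m-1}}$ remain morphisms of the twisted algebra $B^{m-1}_2$ (not just of $B$); this follows from multiplicativity and commutativity of $\a,\b$, exactly as you indicate.
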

\begin{proof}Apply Proposition \ref{prop3.13} with $\a' = \a^m, \b' = \b^m$ and $\a' = \a^{2m-1}, \b' = \b^{2m-1}$ respectively.
\end{proof}
\begin{thm}
	Let $(B, \<_\l, \>_\l, *_\l,  \a, \b)$ be a regular noncommutative BiHom-pre-Poisson
	conformal algebra. Then  $(B, [._\l.], \cdot_\l,  \a, \b)$ is a noncommutative BiHom-Poisson conformal algebra with
	$x \cdot_\l y = x\<_\l y + x \>_\l y,$  and  $[x_\l y] = x*_\l y- \a^{-1}\b(y) *_{-\p-\l} \a\b^{-1}(x) $, for any $ x, y\in B$, $\l\in \mathbb{C}$ We say that $(B, [._\l.], \cdot_\l,  \a, \b)$ is the sub-adjacent noncommutative BiHom-Poisson conformal algebra of $(B, \<_\l, \>_\l, *_\l,  \a, \b)$ and denoted by $B^c$.
\end{thm}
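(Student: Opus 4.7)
The plan is to verify the three defining conditions of the (noncommutative variant of the) BiHom-Poisson conformal algebra in Definition \ref{defBPA}: that $(B,\cdot_\l,\a,\b)$ is a BiHom-associative conformal algebra, that $(B,[\cdot_\l\cdot],\a,\b)$ is a BiHom-Lie conformal algebra, and that the BiHom-Leibniz identity \eqref{liebniz} holds. The first two come essentially for free from earlier results; the third is the real content of the theorem.

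For the associative structure, the underlying BiHom-dendriform conformal algebra $(B,\<_\l,\>_\l,\a,\b)$ yields, via Lemma \ref{lem3.7}, the BiHom-associative conformal algebra $(B,\cdot_\l,\a,\b)$ with $x\cdot_\l y=x\<_\l y+x\>_\l y$. For the Lie bracket, the underlying regular BiHom-preLie conformal algebra $(B,*_\l,\a,\b)$ yields, via Proposition \ref{prop3.2}, the BiHom-Lie conformal algebra $(B,[\cdot_\l\cdot],\a,\b)$ with $[x_\l y]=x*_\l y-\a^{-1}\b(y)*_{-\p-\l}\a\b^{-1}(x)$, this being exactly the sub-adjacent bracket.

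It remains to establish the BiHom-Leibniz identity
\begin{equation*}
[\a\b(x)_\l(y\cdot_\m z)]=[\b(x)_\l y]\cdot_{\l+\m}\b(z)+\b(y)\cdot_\m[\a(x)_\l z].
\end{equation*}
I expand both sides by substituting $\cdot_\nu=\<_\nu+\>_\nu$ and the definition of the bracket. The multiplicativity of $\a,\b$ with respect to the dendriform products, in the form $\a^{-1}\b(y\<_\m z)=\a^{-1}\b(y)\<_\m\a^{-1}\b(z)$ and its $\>$-analogue (from Eqs.~\eqref{D1}--\eqref{D2}), lets me push the inverse twists onto the individual factors. The LHS thus becomes a sum of monomials whose outermost operation is $*_\l$, while the RHS becomes a sum of monomials whose outermost operation is $\<_{\l+\m},\>_{\l+\m},\<_\m$, or $\>_\m$. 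After variable substitutions of the form $y\mapsto\a^{-1}(y)$ or $z\mapsto\a^{-1}(z)$ (justified by regularity of $\a$), the three compatibility relations of Eq.~\eqref{eq35} convert the LHS into the RHS: the first relation handles the terms with inner $\<$, the second handles those with inner $\>$, and the third supplies the cross-term identities required to reconcile the remaining contributions.

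The principal obstacle is Step~3. The bracket introduces various powers of $\a,\b,\a^{-1},\b^{-1}$, and each of the three compatibility relations in Eq.~\eqref{eq35} must be applied in a precisely substituted form to cancel a specific subset of the expanded monomials; the sesqui-linearity rules and the commutativity of $\a,\b$ with $\p$ are used to align $\l$-dependencies along the way. Regularity of $(B,\<_\l,\>_\l,*_\l,\a,\b)$ is essential throughout, both to give meaning to the inverse twists appearing in the bracket and to justify the substitutions made in the computation.
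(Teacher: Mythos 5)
Your proposal follows the paper's proof essentially verbatim: both obtain the associative part from Lemma \ref{lem3.7}, the Lie part from Proposition \ref{prop3.2}, and then verify the BiHom-Leibniz identity by expanding it into twelve monomials via $\cdot_\l=\<_\l+\>_\l$ and the sub-adjacent bracket, cancelling them in three groups by means of the three compatibility conditions of Eq.~(\ref{eq35}). The only difference is one of detail: the paper writes out the twelve terms and their grouping explicitly, whereas you describe the same cancellation scheme at a higher level.
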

\begin{proof} By Proposition \ref{prop3.2} and Lemma \ref{lem3.7}, we deduce that $(B, \cdot_\l, \alpha, \beta)$ is a  conformal algebra and  $ (B, [\cdot_\l \cdot] , \alpha, \beta) $   is a BiHom-Lie conformal algebra. \\
	Now, we show the BiHom-Leibniz conformal identity 
	\begin{align*}
	&[\alpha \beta(x)_\l (y \cdot_\m z)] - [\beta(x)_\l y] \cdot_{\l+\m} \beta(z) - \beta(y) \cdot_\m [\alpha(x)_\l z] \\
	=& [\alpha \beta(x)_\l (y \prec_\m z + y \succ_\m z)] -  [\beta(x)_\l y] \prec_{\l+\m} \beta(z) - [\beta(x)_\l y] \succ_{\l+\m }\beta(z) \\
	&- \beta(y) \prec_\m  [\alpha(x)_\l z]  - \beta(y) \succ_\m [\alpha(x)_\l z] \\
=& \alpha \beta(x) *_\l (y \prec_\m z) - \alpha^{-1}\beta(y \prec_\m z) *_{-\p-\l} \a^{2}(x) \\
	&+ \alpha \beta(x) *_\l (y \succ_\m z) - \alpha^{-1}\beta(y \succ_\m z) *_{-\p-\l} \a^{2}(x) \\
	&- (\beta(x) *_\l y) \prec_{\l+\m} \beta(z) + (\alpha^{-1} \beta(y) *_{-\p-\l} \alpha(x))\prec_{\l+\m} \beta(z) \\
	&- (\beta(x) *_{\l} y) \succ_{\l+\m} \beta(z) + (\alpha^{-1} \beta(y) *_{-\p-\l} \alpha(x))\succ_{\l+\m} \beta(z) \\
	&- \beta(y) \prec_\m (\alpha(x) *_\l z) + \beta(y) \prec_{\m} (\alpha^{-1} \beta(z) *_{-\p-\l} \a^2\beta^{-1} (x)) \\
	&- \beta(y) \succ_\m (\alpha(x) *_\l z) + \beta(y) \succ_\m (\alpha^{-1} \beta(z) *_{-\p-\l} \a^2\beta^{-1}(x)) 
	\\=& \{^1\alpha \beta(x) *_\l (y \prec_\m z)- ^5(\beta(x) *_\l y) \prec_{\l+\m} \beta(z) \\&- ^9\beta(y) \prec_\m (\alpha(x) *_\l z) + ^{10} \beta(y) \prec_{\m} (\alpha^{-1} \beta(z) *_{-\p-\l} \a^2\beta^{-1} (x)) \}\\&+\{^2- \alpha^{-1}\beta(y \prec_\m z) *_{-\p-\l} \a^{2}(x)- ^4 \alpha^{-1}\beta(y \succ_\m z) *_{-\p-\l} \a^{2}(x)\\&+ ^6(\alpha^{-1} \beta(y) *_{-\p-\l} \alpha(x))\prec_{\l+\m} \beta(z) + ^8(\alpha^{-1} \beta(y) *_{-\p-\l} \alpha(x))\succ_{\l+\m} \beta(z) \}\\&+\{ ^3+ \alpha \beta(x) *_\l (y \succ_\m z)- ^7 (\beta(x) *_{\l} y) \succ_{\l+\m} \beta(z) \\&- ^{11}\beta(y) \succ_\m (\alpha(x) *_\l z) +^{12} \beta(y) \succ_\m (\alpha^{-1} \beta(z) *_{-\p-\l} \a^2\beta^{-1}(x))\}
	\\=& 0 + 0 + 0 = 0.
	\end{align*}Above result is obtained by using Eq. (\ref{eq35}).
\end{proof}In the following we introduce the notions of conformal bimodule of noncommutative BiHom-pre-Poisson conformal algebras and relevant properties are also given \begin{defn}\label{defprerep}Let $ (B, \<_\l,\>_\l,*_\l,\a,\b) $ be a  BiHom-pre-Poisson conformal algebra. A conformal bimodule of $B$ is a $9$-tuple $(l_\<, r_\>, l_\>, r_\<, l_*, r_*,\phi,\psi, M)$ such that $(l_*, r_*, \phi,\psi, M)$ is a conformal bimodule of the BiHom-pre-Lie confomal algebra $(B, *_\l, \a, \b)$ and $(l_\<, r_\<, l_\>, r_\>, \phi,\psi, M)$ is a conformal bimodule of the BiHom-dendriform conformal algebra $(B, \<_\l,\>_\l,\a,\b)$ satisfying for all $x, y \in B$ and $m \in M$ :
	\begin{align}
	l_\<([\b(x)_\l \a(y)])_{\l+\m}\psi(m) &= l_*(\a\b(x))_\l (l_\<(\a(y))_\m m) - l_\<(\a\b(y))_\m (l_*(\a(x))_\l m),\\
	r_\<(\b(x))_\l(\rho(\b(y))_\m\phi(m)) &= l_*(\a\b(y))_\m (r_\<(x)_\l \phi(m))- r_\<(\a(y) *_\m x)_{\l+\m}\phi\psi(m),\\
	-r_\<(\b(x))_\l(\rho(\b(y))_\m\phi(m))& = r_*(\a(y) \<_\m x)_{\l+\m}\phi \psi(m) - l_\<(\a\b(y))_\m (r_*(x)_\l(\phi(m))),\\
	l_\>(\b(x))_\l (\rho(\a\b(y))_\m\phi(m)) &= l_*(\a\b^2(y))_\m (l_\>(x)_\l \phi(m) )- l_\>(\b^2(y) *_\m x)_{\l+\m}\phi\psi(m),\\
	r_\>([\a\b(x)_\l \a(y)])_{\l+\m}\psi(m) &= l_*(\a\b^2 (x))_\l (r_\>(\a(y))_\m m)- r_\>(\a\b(y))_{\m}(l_*(\b^2(x))_\l m),\\
	-l_\>(\b(x))_\l (\rho(\b(y))_\m \phi^{2}(m)) &= r_*(x \>_\l \a(y))_{\l+\m}\phi\psi^2
	(m) - r_\>(\a\b(y))_\m (r_*(x)_\l \psi^2(m)), \\
	l_*(\b(x)\cdot_\l \a(y))_{\l+\m}\psi(m) &= r_\>(\b(y))_\m l_*(\b(x))_\l \phi(m) + l_\<(\a\b(x))_\l l_*(\a(y))_\m m,\\
	r_*(\b(x))_\l(l_\cdot(\b(y))_\m \phi(m)) &= l_\>(\b(y) *_\m \a(x))_{\l+\m}\phi(m) + l_\<(\a\b(y))_\m( r_*(x)_\l\phi(m)),\\
	r_*(\b(x))_\l r_\cdot(\a(y))_\m\psi(m) &= r_\>(\b(y))_\m r_*(\a(x))_\l\psi(m) + r_\<(\a(y) *_\m x)_{\l+\m}\phi\psi(m), 
	\end{align}where
	$$ x \cdot_\l y = x \<_\l y + x \>_\l y, l_\cdot = l_\< + l_\>, r_\cdot = r_\< + r_\>,	 $$
	$$ [\b(x)_\l \a(y)] = \b(x) *_\l \a(y) - \b(y) *_{-\p-\l} \a(x), $$
	$$(\rho \circ \b)\phi = (l_* \circ \b)\phi - (r_* \circ \a)\psi. $$\end{defn}
\begin{prop}
	Let $(l_\<,r_\< ,l_\>,r_\>, l_*,r_* \phi, \psi, M)$ be a conformal-Bimodule of a BiHom-pre Poisson conformal algebra $(B, \<_\l,\>_\l,*_\l, \a, \b)$, where $M$ is a $ \mathbb{C}[\p] $- module, $\phi,\psi$ are  $ \mathbb{C} $-linear maps, satisfying $\p\phi=\phi\p$ and $\p\psi=\psi\p$. Then, the direct sum $B\oplus M$ is a BiHom-pre-Poisson conformal algebra by defining $\l$-multiplication $\<'_\l$  $ \>'_\l $ and $*'_\l$ in $B \oplus M$ as follows
	\begin{equation}
	\begin{aligned}(x_1 +m_1)\<'_\l(x_2 + m_2) &:= x_1 \<_\l x_2 + (l_\<(x_1)_\l{ m_2} + r_\<(x_2)_{-\p-\l} m_1),\\(x_1 +m_1)\>'_\l(x_2 + m_2) &:= x_1 \>_\l x_2 + (l_\>(x_1)_\l{ m_2} + r_\>(x_2)_{-\p-\l} m_1),\\(x_1 +m_1)*'_\l(x_2 + m_2) &:= x_1 *_\l x_2 + (l_*(x_1)_\l{ m_2} + r_*(x_2)_{-\p-\l} m_1),\\
	(\a \oplus \phi)(x  + m) &:= \a(x ) + \phi(m),\\
	(\b \oplus \psi)(x  + m) &:= \b(x) + \psi(m ),
	\end{aligned}
	\end{equation}for all $x,x_1, x_2 \in B,m, m_1, m_2 \in M$ and $\l,\m\in \mathbb{C}$.
\end{prop}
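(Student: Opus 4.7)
The plan is to verify, in order, the three conditions in the definition of a noncommutative BiHom-pre-Poisson conformal algebra for the tuple $(B\oplus M,\<'_\l,\>'_\l,*'_\l,\a\oplus\phi,\b\oplus\psi)$. The first condition requires that $(B\oplus M,\<'_\l,\>'_\l,\a\oplus\phi,\b\oplus\psi)$ is a BiHom-dendriform conformal algebra, which is precisely the conclusion of the semi-direct product proposition for BiHom-dendriform conformal algebras stated earlier in this section, applied to the dendriform part $(l_\<,r_\<,l_\>,r_\>,\phi,\psi,M)$ of the bimodule. The second condition requires that $(B\oplus M,*'_\l,\a\oplus\phi,\b\oplus\psi)$ is a BiHom-preLie conformal algebra, which follows from the corresponding semi-direct product result for BiHom-preLie conformal algebras applied to $(l_*,r_*,\phi,\psi,M)$. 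Thus, steps one and two are essentially free from the hypothesis that $(l_\<,r_\<,l_\>,r_\>,l_*,r_*,\phi,\psi,M)$ is a conformal bimodule.

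The substantive content lies in verifying the three compatibility relations in Eq.~(\ref{eq35}). My approach is to substitute $x\mapsto x_1+m_1,\ y\mapsto x_2+m_2,\ z\mapsto x_3+m_3$ into each of the three equations and expand using the defining formulas for $\<'_\l,\>'_\l,*'_\l$ together with $(\a\oplus\phi),(\b\oplus\psi)$ and the induced bracket $[(x_i+m_i)_\l(x_j+m_j)]'=[(x_i)_\l x_j]+{\rho(x_i)}_\l m_j-{\rho(\a^{-1}\b(x_j))}_{-\p-\l}\phi\psi^{-1}(m_i)$. Each expanded equation then projects onto a $B$-component and an $M$-component. The $B$-component recovers exactly the original compatibility relation in $(B,\<_\l,\>_\l,*_\l,\a,\b)$ and hence vanishes. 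The $M$-component is linear in $m_1,m_2,m_3$ and therefore splits further, according to which module element appears, into three groups of terms.

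The main obstacle is the bookkeeping in matching the nine resulting $M$-valued identities against the nine mixed equations listed in Definition~\ref{defprerep}. Concretely, for each compatibility relation in Eq.~(\ref{eq35}), one of the three $m_i$-linear pieces will match one equation from the $\<$-block, one from the $\>$-block, and one from the $*$-block of Definition~\ref{defprerep}; the identity $\rho=(l_*\circ\b)\phi-(r_*\circ\a)\psi$ and the sub-adjacent bracket formula $[\b(x)_\l\a(y)]=\b(x)*_\l\a(y)-\b(y)*_{-\p-\l}\a(x)$ must be used to rewrite the $\rho$-terms that originate from the BiHom-Lie-type multiplication $*'_\l$ in the expansion, so that the remaining summands align term-by-term with the statements of Definition~\ref{defprerep}. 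Once the matching is done carefully, each of the nine collected pieces cancels, and the three compatibility relations follow. The calculations themselves are routine applications of sesqui-linearity and the commuting relations $\phi\p=\p\phi,\ \psi\p=\p\psi$; the only real care needed is in tracking signs, conformal parameters $-\p-\l$ versus $\l+\m$, and the twisting by $\a^{-1}\b$ and $\phi\psi^{-1}$ in the bracket formula.
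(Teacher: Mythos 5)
Your plan matches the paper's proof: the dendriform and preLie parts are delegated to the earlier semi-direct product propositions, and the compatibility relations of Eq.~(\ref{eq35}) are verified by expanding on $x_i+m_i$, separating the $B$- and $M$-components, and matching the $M$-valued terms against Definition~\ref{defprerep} (the paper carries this out explicitly only for the first relation, pairing the four module terms with its first three equations and noting the others are analogous). One small correction to your bookkeeping: the nine mixed equations of Definition~\ref{defprerep} are organized so that each compatibility relation draws all of its $m_i$-linear identities from a single block of three, rather than one each from a $\<$-, $\>$-, and $*$-block.
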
We denote this BiHom-pre-Poisson conformal algebra by $(B\oplus M,\<'_\l, \>'_\l, *'_\l, \a+\b, \phi+\psi)$, or simply $(B\ltimes_{l_\<,r_\< l_*,r_*,l_\>,r_\>,\a,\b,\phi,\psi}M)$. 
\begin{proof}To show $(B\oplus M,\<'_\l, \>'_\l, *'_\l, \a+\b, \phi+\psi)$ is a BiHom-pre-Poisson conformal algebra, we need to show the axioms  in Eq. (\ref{eq35}), for convenience, we only give the proof of first axiom, other can be proved likewise. 
	For any $x,y,z \in B$, $ m_1,m_2,m_3\in M$ and $\l,\m\in \mathbb{C}$, we have
	\begin{align*}
	&((\b+\psi)(x  + m_1) *'_\l (\a+\b)(y + m_2))\<'_{\l+\m} (\b+\psi)(z + m_3)\\&- ((\b+\psi)(y + m_2) *'_{-\p-\l} (\a+\b)(x  + m_1)) \<'_{\l+\m}(\b+\psi)(z + m_3)\\=&((\b(x) *_\l\a(y)) + l_*(\b(x))_\l\phi(m_2) + r_*(\a(y)_{-\p-\l}\psi(m_1))) \<'_{\l+\m} (\b+\psi)(z + m_3)\\&
	- (\b(y)*_{-\p-\l} \a(x ) + l_*(\b(y)_{-\p-\l} \phi(m_1) + r_*(\a(x))_\l\psi(m_2)) \<'_{\l+\m} (\b+\psi)(z + m_3) \\ =&(\b(x) *_\l\a(y))\<'_{\l+\m} (\b(z) + \psi(m_3)) + (l_*(\b(x))_\l\phi(m_2))\<'_{\l+\m} (\b(z) + \psi(m_3)) \\&+ (r_*(\a(y)_{-\p-\l}\psi(m_1)))\<'_{\l+\m} (\b(z) + \psi(m_3))\\& - (\b(y)*_{-\p-\l} \a(x ))\<'_{\l+\m} (\b(z) + \psi(m_3)) + (l_*(\b(y)_{-\p-\l} \phi(m_1))\<'_{\l+\m} (\b(z) + \psi(m_3))\\& +( r_*(\a(x))_\l\psi(m_2))) \<'_{\l+\m} (\b(z) + \psi(m_3)) \\ 
	=&[\b(x)_\l\a(y)]_{\l+\m}\psi(m_3)  + l_\<([\b(x )_\l\a(y)])_{\l+\m}\psi(m_3)+ r_\<(\b(z))_{-\p-\l-\m}(\rho(\b(y))_{\l}\phi(m_1)) \\&+ r_\<(\b(z))_{-\p-\l-\m}(\rho(\a(x))_{-\p-\l}\psi(m_2)). \end{align*}
	On the other hand,\begin{align*}&(\a\b(x)+\phi\psi(m_1))*'_\l ((\a(y) + \phi(m_2)) \<_\m (z + m_3))\\&- (\a\b(y)+\phi\psi( m_2))  \<'_\m ((\a(x)+\phi( m_1)) *'_\l(z + m_3))\\
	=&(\a\b(x ) + \phi\psi(m_1))*'_\l(\a(y) \<_\m z + l_\<(\a(y))_\m m_3 + r_\<(z)_{-\p-\m}\phi(m_2)
\\&	- (\a\b(y) + \phi\psi(m_2)) \<'_\m(\a(x) *_\l z + l_*(\a(x))_\l (m_3) + r_*(z)_{-\p-\l}\phi (m_1)) \\
=&\a\b(x) *_\l (\a(y) \<_\m z) + l_*(\a\b(x))_\l (l_\<(\a(y))_\m m_3) \\&
+ l_*(\a\b(x ))_\l (r_\<(z)_{-\p-\m}\phi(m_2)) + r_*(\a(y) \<_\m z)_{-\p-\l}\phi\psi(m_1)\\& -\a\b(y) \<_\m (\a(x) *_\l z) - l_\<(\a\b(y))_\m (l_*(\a(x))_\l m_3)\\&- l_\<(\a\b(y))_\m (r_*(z)_{-\p-\l}\phi(m_1)) - r_\<(\a(x) *_\l z)_{-\p-\m}\phi\psi(m_2).\end{align*}
 Using first three equations of the Defintion \ref{defprerep}, Eq. (\ref{eq35}) and conformal sesqui-linearity, proof is clear. However, $1,2,3$ and $4$ term of the second last equality is equated by the pairs $\{1,5\}, \{2,6\},\{3,8\} $  and $  \{4,7\}$ in the last equality, 
 \end{proof}
\begin{ex}
	    Let $(B, \prec_\l, \succ_\l, *_\l, \a, \b)$ be a noncommutative BiHom-pre-Poisson  conformal algebra.  A regular conformal  bimodule of $B$ is defined as the tuple $(L_\prec, R_\prec, L_\succ, R_\succ, L_*, R_*, \a, \b, B)$, where $L_\prec(x)_\l y = x \prec_\l y$, $R_\prec(x)_\l y = y \prec_{-\p-\l} x$, $L_\succ(x)_\l y = x \succ_\l y$, $R_\succ(x)_\l y = y \succ_{-\p-\l} x$, and $L_*(x)_\l y = x *_\l y$, $R_*(x)_\l y = y *_{-\p-\l} x$, for all $x, y \in B$, $\l\in \mathbb{C}$.
\end{ex}
\begin{prop}
    Let  $(B_1, \prec_\l^1, \succ_\l^1, \star_\l^1, \alpha_1, \alpha_2)$ and $(B_2, \prec_\l^2, \succ_\l^2, \star_\l^2, \beta_1, \beta_2)$ be two  noncommutative BiHom-pre-Poisson conformal algebra and $f$ is the morphism between them. We observe that, by using $f$, we can form a  conformal bimodule of $B_1$ represented as $ (l_\prec^1, r_\prec^1, l_\succ^1, r_\succ^1, l_\star^1, r_\star^1,\phi,\psi, B_2)$ and defined by $ l_\prec^1(x)_\l y = f(x) \prec^2_\l y$, $ r_\prec^1(x)_\l y = y \prec^2_{-\p-\l} f(x)$, $ l_\succ^1(x)_\l y = f(x) \succ^2_\l y$, $ r_\succ^1(x)_\l y = y \succ^2_{-\p-\l} f(x)$ and $l_\star^1(x)_\l y = f(x) \star^2_{\l} y$, $r_\star^1(x)_\l y = y \star^2_{-\p-\l} f(x)$ for all $(x, y) \in B_1 \times B_2$ and $\l,\m\in \mathbb{C}$.
\end{prop}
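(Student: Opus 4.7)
The plan is to verify, one at a time, each axiom appearing in Definition \ref{defprerep} for the proposed tuple $(l_\prec^1, r_\prec^1, l_\succ^1, r_\succ^1, l_\star^1, r_\star^1, \phi, \psi, B_2)$, taking $\phi := \b_1$ and $\psi := \b_2$ so that $B_2$ plays the role of the module. The only tools I will use are the morphism hypotheses $f\circ \a_1 = \b_1\circ f$, $f\circ \a_2 = \b_2\circ f$ together with the multiplicativity identities $f(x\prec^1_\l y) = f(x)\prec^2_\l f(y)$, $f(x\succ^1_\l y) = f(x)\succ^2_\l f(y)$, and $f(x\star^1_\l y) = f(x)\star^2_\l f(y)$, plus the axioms satisfied by $B_2$ itself as a BiHom-pre-Poisson conformal algebra.

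I would organize the verification in three blocks. First, I would dispose of the sesqui-linearity conditions and the commutation axioms of $l^1_\bullet,r^1_\bullet$ with $\phi,\psi$. Each of these reduces to the corresponding identity in $B_2$ after substituting $l^1_\prec(x)_\l y = f(x)\prec^2_\l y$ (and the parallel definitions) and absorbing a twist via $f\circ\a_i = \b_i\circ f$; for instance, $\phi(l^1_\prec(x)_\l y) = \b_1(f(x)\prec^2_\l y) = \b_1(f(x))\prec^2_\l \b_1(y) = f(\a_1(x))\prec^2_\l \b_1(y) = l^1_\prec(\a_1(x))_\l \phi(y)$. Second, I would check the dendriform-bimodule identities for $(l^1_\prec,r^1_\prec,l^1_\succ,r^1_\succ)$ and the preLie-bimodule identities for $(l^1_\star, r^1_\star)$. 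Each one becomes a consequence of the analogous axiom in $(B_2,\prec^2_\l,\succ^2_\l,*^2_\l,\b_1,\b_2)$ after applying $f$ to every argument lying in $B_1$, since the homomorphism property lets us convert products of $f$-images into $f$-images of products and so slot directly into the corresponding axiom in $B_2$.

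The third block, and the main obstacle, is the nine compatibility identities that link the preLie and dendriform bimodule actions. Each of them mixes $l^1_\star, r^1_\star$ with $l^1_\prec, r^1_\prec$ or $l^1_\succ, r^1_\succ$, and on the $B_1$-side it involves derived operations such as the induced bracket $[\b(x)_\l \a(y)] = \b(x)*_\l \a(y)-\b(y)*_{-\p-\l}\a(x)$ and the commutative product $x\cdot_\l y = x\prec_\l y + x\succ_\l y$. The strategy is again to rewrite every $B_1$-expression via $f$ so that both sides of a given compatibility identity live entirely inside $B_2$, and then appeal to the compatibility conditions (Eq. (\ref{eq35})) that hold in $B_2$. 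The difficulty lies purely in bookkeeping: one must track how $f$ intertwines each of $\a_1, \a_2, \b_1, \b_2$, and verify that the induced brackets and commutative products on $B_1$ are carried by $f$ to their counterparts on $B_2$. No new identity is required beyond those already enjoyed by $B_2$, so once the bookkeeping is set up cleanly each compatibility reduces to a direct quotation of the corresponding axiom of $B_2$, completing the proof.
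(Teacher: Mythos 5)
Your proposal is correct and follows essentially the same route as the paper: the tuple is the regular conformal bimodule of $B_2$ pulled back along $f$, and each axiom of Definition \ref{defprerep} is verified by rewriting every $B_1$-expression through $f$ (using multiplicativity and the intertwining relations $f\alpha_i=\beta_i f$, with $\phi=\beta_1$, $\psi=\beta_2$) and then quoting the corresponding axiom or compatibility condition of $B_2$. The paper carries out only the seventh compatibility identity explicitly and asserts the rest are analogous, which is precisely the bookkeeping scheme you describe.
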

\begin{proof} We need to show the axioms in Definition (\ref{defprerep}). Here we just proof $7th$ axiom. Other axioms can be proved similarly. For any $ x, y \in B_1$, $  z\in B_2$ and $\l,\m\in \mathbb{C}$, we have 
	\begin{align*}
&l_*^1(\b(x) \cdot_\l^1  \a(y))_{\l+\m} \psi(z) \\&= f(\b(x) \cdot^1_\l  \a(y)) *^2 \psi(z)
\\&= (\psi f(x) \cdot^2_\l\phi f(y)) *^ 2_{\l+\m} \psi(z)
\\&= (\psi f(x) \>^2_\l\phi f(y)+ \psi f(x) \<^2_\l\phi f(y)) *^ 2_{\l+\m} \psi(z)
\\&= (\psi f(x) \>^2_\l\phi f(y)-\psi f(y) \<^2_{-\p-\l}\phi f(x)) *^ 2_{\l+\m} \psi(z)
\\&= ([\psi f(x) _\l\phi f(y)]_2) *^ 2_{\l+\m} \psi(z)
\\&= f([\b(x) _\l\a (y)]_2) *^ 2_{\l+\m} \psi(z)
\\&= (\psi f(x) *^2_\l \phi(z)) \succ^2_{\l+\m} \psi f(y) + \phi\psi f(x) \prec^2_\l (\phi f(y) *^2_\m z) \text{ (by (\ref{eq35}))}
\\&= (f(\b(x)) *^2_\l \phi(z)) \succ^2_{\l+\m} f(\b(y)) + f(\a\b(x)) \prec^2_\l (f(\a(y)) *^2_\m z)
\\&=r_{\succ_1} (\b(y))_\m(f(\b(x)) *^2_\l \phi(z)) + l_{\prec_1} (\a\b(x))(f(\a(y)) *^2 z)
\\&=r _{\succ_1} (\b(y))_\m l^*_1(\b(x))_\l\phi(z) + l_{\prec_1} (\a\b(x))_\l l^*_1(\a(y))_\m z.
	\end{align*}
	It completes the proof
	\end{proof}
\section{BiHom-Poisson conformal algebra and $\mathcal{O}$-operators }In this section we introduce the notions of an $\mathcal{O}$-operator of BiHom-Poisson conformal 
algebras and we give some related properties. For this we first recall the notion of $\mathcal{O}$-operator on  conformal algebra and BiHom-Lie conformal algebra as follows.
\begin{defn}
	    Consider we have a  conformal algebra $ (B, *_\l, \a, \b) $ and a conformal bimodule $(l, r, \phi,\psi, M )$ over $B$. An $\mathcal{O}$-operator is a $\mathbb{C}[\p]-$ module homomorphism $T : M\to B$ associated with $(l, r, \phi,\psi, M)$ if it satisfies the following axioms for all $m_1,m_2\in M$ and $\l\in \mathbb{C}$	:
\begin{align*}
	\a T = T \phi , \quad \b T &= T \psi \\
	 T(m_1) \cdot_\l  T(m_2) &= T(l(T(m_1))_\l m_2 + r(T(m_2))_{-\p-\l} m_1)
\end{align*}
\end{defn}
\begin{lem}\label{lem4.2}If we have an $\mathcal{O}$-operator $T : M\to B$ on a  conformal algebra $ (B, *_\l, \a, \b) $, we can establish a BiHom-dendriform  conformal algebra on the conformal bimodule $(l, r, \phi,\psi, M )$ given by:
	\begin{align*}
 m_1 \>_\l m_2 = l(T(m_1))_\l m_2, \quad m_1 \<_\l m_2 = r(T(m_2))_{-\p-\l}m_1 \quad \textit{ for all }  m_1,m_2\in M, \l\in \mathbb{C}.
	\end{align*}
	 \end{lem}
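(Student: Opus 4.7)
The proof is by direct verification of the BiHom-dendriform conformal algebra axioms for $(M, \prec_\l, \succ_\l, \phi, \psi)$ with $\prec_\l, \succ_\l$ defined from $T$. The strategy combines the $\mathcal{O}$-operator identity with the conformal bimodule axioms of Definition \ref{def2.6}. I would begin with the routine checks: conformal sesqui-linearity of $\prec_\l$ and $\succ_\l$ transfers from the corresponding sesqui-linearity of $l$ and $r$ once we observe that $T$, being a $\mathbb{C}[\p]$-module homomorphism, commutes with $\p$; and the compatibility $\phi(m_1 \succ_\l m_2) = \phi(m_1) \succ_\l \phi(m_2)$ (and its three companions for $\prec_\l$ and for $\psi$) follows by combining $\phi(l(x)_\l m) = l(\a(x))_\l \phi(m)$ with the intertwining relation $\a T = T\phi$ built into the definition of an $\mathcal{O}$-operator, and similarly with $\b T = T\psi$.

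The heart of the argument is verifying the three dendriform identities. Identity (D5), namely $\phi(m_1) \succ_\l (m_2 \succ_\m m_3) = (m_1 \cdot_\l m_2) \succ_{\l+\m} \psi(m_3)$ with $m_1 \cdot_\l m_2 := m_1 \prec_\l m_2 + m_1 \succ_\l m_2$, serves as the cleanest template. Unfolding the left-hand side yields $l(\a T(m_1))_\l \bigl(l(T(m_2))_\m m_3\bigr)$, which by the bimodule axiom $l(x *_\l y)_{\l+\m}\psi(m) = l(\a(x))_\l(l(y)_\m m)$ becomes $l(T(m_1) *_\l T(m_2))_{\l+\m} \psi(m_3)$. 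The defining relation of the $\mathcal{O}$-operator now rewrites $T(m_1) *_\l T(m_2)$ as $T(m_1 \prec_\l m_2 + m_1 \succ_\l m_2)$, and one final unfolding of $\succ_\l$ through $T$ produces the right-hand side. Identities (D3) and (D4) are handled analogously, but use the right-action axiom $r(x *_\l y)_{\l+\m}\phi(m) = r(\b(y))_\l(r(x)_\m m)$ and the cross-compatibility $l(\a(x))_\l(r(y)_\m m) = r(\b(y))_\m(l(x)_\l m)$ from Definition \ref{def2.6}, respectively, in combination with the same $\mathcal{O}$-operator substitution.

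The main obstacle I expect is careful bookkeeping of the $\l$-parameters: the right action $r$ introduces shifts of the form $-\p - \l$, so in each of (D3) and (D4) one must match the auxiliary parameters appearing in the bimodule axioms with those arising on the two sides of the dendriform identity, so that the $\mathcal{O}$-operator identity can be applied at the correct intermediate stage. Once this parameter matching is arranged, no further cleverness is needed: the verification collapses to a short chain of rewrites, each step being a routine application of a bimodule axiom, the intertwining $\a T = T\phi$, $\b T = T\psi$, or the operator identity $T(m_1) \cdot_\l T(m_2) = T\bigl(l(T(m_1))_\l m_2 + r(T(m_2))_{-\p-\l} m_1\bigr)$.
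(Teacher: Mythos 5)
Your argument is correct, and the paper itself states Lemma \ref{lem4.2} without any proof, so there is no authorial argument to compare against; what you give is the standard verification and surely the intended one. Your pairing of the three dendriform identities with, respectively, the left-module axiom $l(x*_\l y)_{\l+\m}\psi(m)=l(\a(x))_\l(l(y)_\m m)$, the right-module axiom, and the compatibility $l(\a(x))_\l(r(y)_\m m)=r(\b(y))_\m(l(x)_\l m)$ of Definition \ref{def2.6} --- each combined with the $\mathcal{O}$-operator identity and the intertwining relations $\a T=T\phi$, $\b T=T\psi$, $T\p=\p T$ --- is exactly the right skeleton, and your caveat about tracking the $-\p-\l$ shifts is the only genuinely delicate point.
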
Now we review the notion of an $\mathcal{O}$-operator on a BiHom-Lie conformal algebra that is linked with the conformal  representation. Note that, these $\mathcal{O}$-operator are the generalization of Rota-Baxter operator of $ 0 $ weight .
\begin{defn}

Consider a BiHom-Lie conformal algebra denoted by $(B, [._\l.], \a, \b)$, and let the conformal representation of this algebra is denoted by $(\rho,\phi,\psi,M)$. In this context, an $\mathcal{O}$-operator associated with $(\rho, \phi,\psi, M)$ is a $\mathbb{C}[\p]$-module map $T:M\to B$ that incorporate the following conditions for all $m_1,m_2 \in M, \l\in \mathbb{C}$:
\begin{align*}
\a T = T \phi , \b T &= T \psi\\
[T(m_1) _\l  T(m_2)] &= T(\rho(T(m_1))_\l m_2 + \rho(T(\phi^{-1}\psi (m_2)))_{-\p-\l} \phi \psi^{-1} (m_1)). 
\end{align*}
\end{defn}
\begin{lem}\label{lem4.4}
If we have an $\mathcal{O}$-operator $T: M\to B$ on a BiHom-Lie conformal algebra with respect to the conformal representation $(\rho,\phi,\psi,M)$. we can generate a BiHom-preLie conformal algebra by the following conformal multiplication $*_\l:M\otimes M\to M[\l]$ defined by
\begin{align*}
m_1 *_\l m_2 = \rho(T(m_1))_\l m_2, \quad \forall m_1, m_2 \in M, \l\in \mathbb{C}.
\end{align*}
We denote this BiHom-preLie conformal algebra by $(M, *_\l, \a,\b)$
\end{lem}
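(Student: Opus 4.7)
The plan is to verify that $(M, *_\l, \phi, \psi)$ satisfies the three kinds of axioms in the definition of a BiHom-preLie conformal algebra: conformal sesquilinearity, compatibility with the twist maps, and the BiHom-preLie identity itself. Throughout, the three available tools are (i) the representation axioms of $\rho$, (ii) the intertwining relations $\alpha T = T\phi$ and $\beta T = T\psi$ coming from the $\mathcal{O}$-operator condition, and (iii) the $\mathcal{O}$-operator identity
\[
[T(m_1)_\l T(m_2)] \;=\; T\bigl(\rho(T(m_1))_\l m_2 \;+\; \rho(T(\phi^{-1}\psi(m_2)))_{-\p-\l}\,\phi\psi^{-1}(m_1)\bigr).
\]

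First I would verify conformal sesquilinearity of $*_\l$ by direct substitution into $m_1*_\l m_2 = \rho(T(m_1))_\l m_2$ and using the sesquilinearity of $\rho$ together with $T\p = \p T$ (which follows from $\a T = T\phi$ and $\p \a = \a \p$, $\p \phi = \phi \p$). Next I would verify $\phi(m_1 *_\l m_2) = \phi(m_1) *_\l \phi(m_2)$ and the analogous statement for $\psi$; here the computation is
\[
\phi\bigl(\rho(T(m_1))_\l m_2\bigr) \;=\; \rho(\a T(m_1))_\l \phi(m_2) \;=\; \rho(T\phi(m_1))_\l \phi(m_2),
\]
which is just the rep property $\phi \circ \rho(x) = \rho(\a(x))\circ\phi$ combined with $\a T = T\phi$.

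The substantive step is the BiHom-preLie identity. The key idea is to invoke the BiHom-Lie rep axiom
\[
\rho([\b(x)_\l y])_{\l+\m}\psi(m) \;=\; \rho(\a\b(x))_\l \rho(y)_\m m \;-\; \rho(\b(y))_\m \rho(\a(x))_\l m
\]
with the substitutions $x = T(m_1)$, $y = T\phi(m_2)$, $m = m_3$. Using $\a T = T\phi$, $\b T = T\psi$, the right-hand side rewrites as $\phi\psi(m_1)*_\l(\phi(m_2)*_\m m_3) - \phi\psi(m_2)*_\m(\phi(m_1)*_\l m_3)$, which is exactly the "associator" appearing in the BiHom-preLie identity. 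The left-hand side involves the bracket $[T\psi(m_1)_\l T\phi(m_2)]$, which the $\mathcal{O}$-operator formula converts into $T\bigl(\psi(m_1)*_\l \phi(m_2) + \psi(m_2)*_{-\p-\l}\phi(m_1)\bigr)$, and after applying $\rho(\cdot)_{\l+\m}\psi(m_3)$ one obtains an expression of the form $(\psi(m_1)*_\l\phi(m_2))*_{\l+\m}\psi(m_3) + (\text{cross term})$. Performing the analogous computation with $(m_1,\l)\leftrightarrow(m_2,\m)$ and using BiHom-skew-symmetry of $[\cdot_\l\cdot]$ on $B$ to match the two cross terms would yield the symmetry of the associator under the swap.

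The main obstacle will be careful bookkeeping of the twin twists $(\a,\b)$ on $B$ versus $(\phi,\psi)$ on $M$, especially the shift patterns $\phi^{-1}\psi$, $\phi\psi^{-1}$ inside the $\mathcal{O}$-operator formula and the interplay of the parameters $\l$ and $-\p-\l$ when applying BiHom-skew-symmetry. A cleaner packaging, if one is willing to invoke Proposition \ref{prop3.2}, is to observe that the $\mathcal{O}$-operator identity says precisely that $T$ intertwines $[\cdot_\l\cdot]'$ on $M$ (defined by the sub-adjacent formula $m_1*_\l m_2 - \phi^{-1}\psi(m_2)*_{-\p-\l}\phi\psi^{-1}(m_1)$) with $[\cdot_\l\cdot]$ on $B$, so pulling back BiHom-Jacobi along $T$ reduces the verification of the BiHom-preLie identity for $*_\l$ to identities that follow from the BiHom-Jacobi identity on $B$ together with the representation axioms.
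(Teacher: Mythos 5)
The paper states Lemma \ref{lem4.4} with no proof at all, so there is no argument of the authors' to compare against; your strategy --- check sesquilinearity and $\phi,\psi$-multiplicativity directly, then obtain the BiHom-preLie identity by substituting $x=T(m_1)$, $y=T\phi(m_2)=\a T(m_2)$, $m=m_3$ into the representation axiom $\rho([\b(x)_\l y])_{\l+\m}\psi(m)=\rho(\a\b(x))_\l\rho(y)_\m m-\rho(\b(y))_\m(\rho(\a(x))_\l m)$ and then rewriting $\rho([T\psi(m_1)_\l T\phi(m_2)])_{\l+\m}\psi(m_3)$ via the $\mathcal{O}$-operator identity --- is exactly the standard argument and the correct one here.

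There is, however, one concrete point your sketch does not resolve: the sign of the cross term. Writing $A_1=(\psi(m_1)*_\l\phi(m_2))*_{\l+\m}\psi(m_3)$ and $A_2=(\psi(m_2)*_\m\phi(m_1))*_{\l+\m}\psi(m_3)$, the representation axiom identifies the difference of the "second" terms of the BiHom-preLie identity with $C:=\rho([T\psi(m_1)_\l T\phi(m_2)])_{\l+\m}\psi(m_3)$, so what must be shown is $A_1-A_2=C$, i.e.
\begin{align*}
[T\psi(m_1)_\l T\phi(m_2)]\;=\;T\bigl(\rho(T\psi(m_1))_\l\phi(m_2)\;-\;\rho(T\psi(m_2))_{-\p-\l}\phi(m_1)\bigr),
\end{align*}
the specialization $-\p-\l\mapsto\m$ being forced by sesquilinearity once this sits in the $(\l+\m)$-slot. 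The $\mathcal{O}$-operator identity as printed in the paper has a plus where this minus should be; applying it literally, as you do, yields $C=A_1+A_2$, which combined with the required $A_1-A_2=C$ would force $A_2=0$ --- so the identity genuinely fails with that sign. Your proposed repair (redo the computation with $(m_1,\l)\leftrightarrow(m_2,\m)$ and invoke BiHom-skew-symmetry) does not help: skew-symmetry merely reproduces the antisymmetry already visible on the right-hand side of the representation axiom and gives no new relation between $A_1$ and $A_2$. The actual fix is that the $\mathcal{O}$-operator condition must carry a minus sign, as is forced by requiring the graph of $T$ to be a subalgebra for the semidirect-product bracket $[(x_1+m_1)_\l(x_2+m_2)]_\rho=[x_1{}_\l x_2]+\rho(x_1)_\l m_2-\rho(\a^{-1}\b(x_2))_{-\p-\l}\phi\psi^{-1}(m_1)$ together with $\a^{-1}\b T=T\phi^{-1}\psi$; the plus in the paper's definition is evidently a typo. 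With that correction your computation closes immediately and the swap step is unnecessary. (Two minor points: $T\p=\p T$ holds because $T$ is a $\mathbb{C}[\p]$-module map by definition, not as a consequence of $\a T=T\phi$; and the twist maps on $M$ are $(\phi,\psi)$, as you correctly use, even though the lemma's statement writes $(\a,\b)$.)
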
 
\begin{defn}
 A $\mathbb{C}[\p]$-module homomorophism  $ T : M \to B$ is called an $\mathcal{O}$-operator on a BiHom-Poisson conformal algebra $(B,*_\l,  [._\l.], \a, \b)$ with respect to  the conformal representation $ (l,r, \rho,\phi,\psi, M )$, if $T$ is an $\mathcal{O}$-operator on both $ (B, *_\l,\a,\b) $ the  conformal algebra and $ (B, [._\l.], \a, \b) $ the BiHom-Lie conformal algebra .
\end{defn}
\begin{ex}
    A Rota-Baxter operator on a noncommutative BiHom-Poisson conformal algebra $(B, *_\l, [._\l.], \a,\b)$ with respect to the regular representation is defined as an $\mathcal{O}$-operator on $B$.
\end{ex}
\begin{thm}
	Consider a BiHom-Poisson conformal algebra $(B, *_\l, [._\l.], \a,\b)$ an $\mathcal{O}$-operator and $T : M \to B$ on $ B $ with respect to the conformal representation $ (l,r, \rho,\phi,\psi, M )$. Note that $(M, \<_\l,\>_\l,*_\l,\a,\b)$ becomes a BiHom-pre-Poisson conformal algebra by defining the new operations $ \>_\l,\<_\l $ and $ *_\l$ on $ M $ given by \begin{align*}
	 m_1 *_\l m_2 = \rho(T(m_1))_\l m_2,\quad  m_1 \>_\l m_2= r(T(m_2))_{-\p-\l} m_1,\quad  m_1\<_\l m_2 = l(T(m_1))_\l m_2.
	\end{align*} Furthermore, we have that $T(M) = \{T(m); m \in M\} \subset B$ forms a subalgebra of $B$. Additionally, on $T(M)$, there exists an induced BiHom-pre-Poisson conformal algebra structure described by 
\begin{align*}
T(m_1) *_\l T(m_2) = T(m_1 *_\l m_2),\\T(m_1)\<_\l T(m_2) = T(m_1 \<_\l m_2,\\ T(m_1) \>_\l T(m_2) = T(m_1 \>_\l m_2), \end{align*} for all $m_1,m_2 \in M$ and $\l\in \mathbb{C}$ .
\end{thm}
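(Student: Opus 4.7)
The plan is to decompose the proof into three streams that match the three layers of structure of a BiHom-pre-Poisson conformal algebra. First, I would invoke Lemma \ref{lem4.2} applied to the $\mathcal{O}$-operator $T$ viewed as an $\mathcal{O}$-operator on the BiHom-associative (in fact commutative) part $(B,*_\l,\a,\b)$ with respect to the bimodule $(l,r,\phi,\psi,M)$; this immediately yields that $(M,\<_\l,\>_\l,\a,\b)$ is a BiHom-dendriform conformal algebra with the stated operations. Second, I would invoke Lemma \ref{lem4.4} applied to $T$ viewed as an $\mathcal{O}$-operator on the BiHom-Lie part $(B,[\cdot_\l\cdot],\a,\b)$ with respect to $(\rho,\phi,\psi,M)$; this produces the BiHom-preLie conformal algebra $(M,*_\l,\a,\b)$ with $m_1*_\l m_2=\rho(T(m_1))_\l m_2$.

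The genuine work is then verifying the three compatibility identities in Eq.~(\ref{eq35}) that glue the dendriform and preLie structures on $M$ into a BiHom-pre-Poisson structure. For each identity, my strategy is to expand both sides using the definitions $m_1*_\l m_2=\rho(T(m_1))_\l m_2$, $m_1\<_\l m_2=l(T(m_1))_\l m_2$, $m_1\>_\l m_2=r(T(m_2))_{-\p-\l}m_1$, then transport the computation from $M$ to $B$ using the $\mathcal{O}$-operator relation $T(m_1)\cdot_\l T(m_2)=T(l(T(m_1))_\l m_2 + r(T(m_2))_{-\p-\l}m_1)$ and $[T(m_1)_\l T(m_2)]=T(\rho(T(m_1))_\l m_2+\rho(T\phi^{-1}\psi(m_2)))_{-\p-\l}\phi\psi^{-1}(m_1))$. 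After these substitutions, each side reduces to an expression in $l,r,\rho$ evaluated on images under $T$, and the equality follows from the conformal representation axioms in Definition~\ref{def2.6} (specifically Eq.~(\ref{eq17}) and its partners) together with the BiHom-Leibniz identity (\ref{liebniz}) applied to elements of the form $T(m_i)$ in $B$. Intertwining with $\phi,\psi$ via $\a T=T\phi$ and $\b T=T\psi$ is used freely to move twists across $T$.

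For the second assertion, that $T(M)$ is a subalgebra carrying an induced BiHom-pre-Poisson conformal algebra structure, I would argue as follows. Using the $\mathcal{O}$-operator identities, one reads off directly that $T(m_1)*_\l T(m_2)=T(\rho(T(m_1))_\l m_2+\rho(T\phi^{-1}\psi(m_2))_{-\p-\l}\phi\psi^{-1}(m_1))\in T(M)$, and similarly $T(m_1)\cdot_\l T(m_2)\in T(M)$, so $T(M)$ is closed. One then checks that splitting $T(m_1)\cdot_\l T(m_2)=T(m_1\<_\l m_2)+T(m_1\>_\l m_2)$ and defining the bracket by the preLie transfer yields well-defined operations on $T(M)$, provided $T$ is injective (if $T$ is not injective the operations descend to $T(M)=M/\ker T$ through the pre-Poisson structure on $M$). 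The compatibility is then inherited from the compatibility on $M$ already proved, via the $T$-equivariance displayed in the statement.

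The main obstacle I expect is the middle identity in Eq.~(\ref{eq35}), which mixes $\>_\l$ on one side with $*_\l$ nested inside, because the $r$-action and $\rho$-action must be simultaneously moved through $T$ with the correct twists $\phi^{-1}\psi$ appearing on the second slot of the bracket; keeping track of which arguments receive $\a^{-1}\b$ or $\phi^{-1}\psi$ corrections and then collapsing the resulting six-term expression via the BiHom-Leibniz relation \eqref{liebniz} is the delicate bookkeeping step. The other two compatibilities follow the same template but with lighter twist corrections.
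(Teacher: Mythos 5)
Your proposal follows essentially the same route as the paper's proof: invoke Lemma \ref{lem4.2} and Lemma \ref{lem4.4} to obtain the BiHom-dendriform and BiHom-preLie structures on $M$, then verify the compatibility conditions of Eq.~(\ref{eq35}) by expanding the definitions, transporting through $T$ via the $\mathcal{O}$-operator identities, and collapsing with the representation axioms (the paper likewise checks only the first identity, using Eq.~(\ref{eq17})). Your treatment of the induced structure on $T(M)$, including the remark about non-injective $T$, is actually more careful than the paper's, which dismisses that part as intuitive.
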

\begin{proof}Both the Lemma \ref{lem4.2} and the Lemma \ref{lem4.4} imply that $(B, \<_\l,\>_\l,\a,\b)$ is a BiHom-dendriform conformal algebra, and $(B, *_\l, \a,\b)$ is a BiHom-preLie conformal algebra. In this proof, we focus on demonstrating the first axiom of Eq. (\ref{eq35}), while noting that the remaining axioms can be proven in a similar manner. Let's consider   $x, y, z \in M$, $\l,\m\in \mathbb{C}$.
	\begin{align*}
	&(\psi(x) *_\l \phi(y) - \psi(y) *_{-\p-\l} \phi(x)) \<_{\l+\m} \psi(z)
	- \phi\psi(x) *_\l (\phi(y) \<_\m z) + \phi\psi(y) \<_\m (\phi(x) *_\l z)\\&
	=(\rho(T(\psi(x))_\l\psi(y) -\rho (T(\psi(y))_{-\p-\l}\phi(x)) \<_{\l+\m} \psi(z)
	\\&-\rho(T(\phi\psi(x)))_\l(\phi(y) \<_\m z) + l(T(\phi\psi(y)))_\m(\phi(x) *_\l z)\\&=l(T(\rho(T(\psi(x)))_\l\phi(y) -\rho(T(\psi(y)))_{-\p-\l}\phi(x)))_{\l+\m}\psi(z)
	\\&-\rho(T(\phi\psi(x)))_\l l(T(\phi(y)))_\m z + l(T(\phi\psi(y)))_\m (\rho(\phi(x))_\l z)
	\\&=l([T(\psi(x))_\l T(\phi(y))])_{\l+\m}\psi(z) -\rho(T(\phi\psi(x)))_\l l(T(\phi(y)))_\m z
+ l(T(\phi\psi(y)))_\m (\rho(\phi(x))_\l z) = 0 . 
	\end{align*} Above expressions are obtained by using Eq. (\ref{eq17}). Therefore, $ (M, \<_\l,\>_\l, *_\l, \a,\b) $ is a BiHom-pre-Poisson conformal algebra.
	The remaining part is fairly intuitive.
\end{proof}
\begin{cor}
 Consider a BiHom-Poisson conformal algebra $ (B, *_\l, [._\l.], \a,\b) $ . In that case there exists a BiHom-pre-Poisson conformal algebra structure on $B$ in such a way that its underlying BiHom-Poisson conformal algebra is exactly a BiHom-Poisson conformal algebra $ (B, *_\l, [._\l.], \a,\b) $ iff there exists an invertible $\mathcal{O}$-operator on $ (B, *_\l, [._\l.], \a,\b).$
\end{cor}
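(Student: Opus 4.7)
The plan is to prove the two directions separately. For the sufficiency direction, the preceding theorem already produces a BiHom-pre-Poisson conformal algebra on the domain $M$ of an $\mathcal{O}$-operator $T:M\to B$; invertibility of $T$ lets one transport this structure onto $B$ itself. For the necessity direction, any BiHom-pre-Poisson structure on $B$ subordinating to the given BiHom-Poisson yields a canonical conformal representation on $M:=B$ with respect to which $T=\mathrm{id}_B$ is an invertible $\mathcal{O}$-operator.

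Concretely, for ($\Leftarrow$), let $T:M\to B$ be invertible and associated to $(l,r,\rho,\phi,\psi,M)$. The preceding theorem supplies $(M,\<_\l,\>_\l,*_\l,\phi,\psi)$. Using the intertwining relations $\a T=T\phi$ and $\b T=T\psi$ together with the bijectivity of $T$, I would push this structure forward to $B$ by declaring $T(m_1)\<_\l T(m_2):=T(m_1\<_\l m_2)$, and similarly for $\>_\l$ and $*_\l$. I then check that its sub-adjacent BiHom-Poisson recovers the original $(B,*_\l,[\cdot_\l\cdot],\a,\b)$. Setting $x=T(m_1),\ y=T(m_2)$, the associative sub-adjacent is
\begin{equation*}
x\<_\l y+x\>_\l y=T\bigl(l(T(m_1))_\l m_2+r(T(m_2))_{-\p-\l}m_1\bigr)=T(m_1)*_\l T(m_2)=x*_\l y
\end{equation*}
by the $\mathcal{O}$-operator identity for the conformal product. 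An analogous computation for the Lie sub-adjacent, using the $\mathcal{O}$-operator identity for $[\cdot_\l\cdot]$ together with conformal BiHom-skew-symmetry applied to the twisted second term inside $T$, gives back $[x_\l y]$.

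For ($\Rightarrow$), assume $(B,\<_\l,\>_\l,*^{p}_\l,\a,\b)$ is a BiHom-pre-Poisson conformal algebra whose sub-adjacent BiHom-Poisson is the prescribed $(B,*_\l,[\cdot_\l\cdot],\a,\b)$. Take $M=B$ as a $\mathbb{C}[\p]$-module with $\phi=\a,\ \psi=\b$, and define
\begin{equation*}
l(x)_\l m:=x\<_\l m,\qquad r(x)_\l m:=m\>_{-\p-\l}x,\qquad \rho(x)_\l m:=x*^{p}_\l m.
\end{equation*}
The bulk of the argument is to verify that $(l,r,\rho,\a,\b,B)$ is a conformal representation of the BiHom-Poisson algebra $(B,*_\l,[\cdot_\l\cdot],\a,\b)$: this unpacks exactly into the BiHom-dendriform axioms for $l,r$, the BiHom-preLie axioms for $\rho$, and the three compatibility identities of the BiHom-pre-Poisson definition (Eq.\ (\ref{eq35})). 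With this representation in hand, $T=\mathrm{id}_B$ is a $\mathbb{C}[\p]$-module map trivially commuting with $\a,\b$, and the two $\mathcal{O}$-operator identities become precisely the sub-adjacent relations $x*_\l y=x\<_\l y+x\>_\l y$ and $[x_\l y]=x*^{p}_\l y-\a^{-1}\b(y)*^{p}_{-\p-\l}\a\b^{-1}(x)$, which hold by hypothesis.

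The main obstacle I expect is the lengthy bookkeeping in the necessity direction: every axiom for a Poisson conformal representation must be matched against a corresponding BiHom-dendriform, BiHom-preLie, or cross-compatibility axiom of the BiHom-pre-Poisson, and the cross-compatibilities in Eq.\ (\ref{eq35}) are exactly what is needed to cover the ``mixed'' representation identities. A secondary delicate point in the sufficiency direction is reconciling the ``$+$'' appearing in the Lie $\mathcal{O}$-operator identity with the ``$-$'' in the sub-adjacent preLie bracket; this is resolved by invoking conformal BiHom-skew-symmetry on the twisted term inside $T$, after which both sides coincide.
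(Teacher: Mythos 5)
Your architecture is the same as the paper's: for sufficiency you transport the pre-Poisson structure of the preceding theorem from $M$ to $B$ along the bijection $T$ (the paper records exactly your push-forward, namely $x\<_\l y=T(l(x)_\l T^{-1}(y))$, $x\>_\l y=T(r(y)_{-\p-\l}T^{-1}(x))$, $x*_\l y=T(\rho(x)_\l T^{-1}(y))$), and for necessity you observe that $\mathrm{id}_B$ is an invertible $\mathcal{O}$-operator for the regular representation $(L_\<,R_\>,L_*,\a,\b,B)$ built from the pre-Poisson operations, which is the paper's second sentence. Your added check that the sub-adjacent structure of the transported algebra returns the original $*_\l$ and $[\cdot_\l\cdot]$ is an improvement, since the paper omits it entirely.

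The one step that does not work as you describe it is the reconciliation of signs in the Lie part. You cannot ``invoke conformal BiHom-skew-symmetry on the twisted second term inside $T$'': skew-symmetry is an identity for the bracket of two elements of $B$, whereas the term in question, $\rho(T(\phi^{-1}\psi(m_2)))_{-\p-\l}\,\phi\psi^{-1}(m_1)$, has a general conformal representation $\rho$ acting on a module element $m_1\in M$, and no skew-symmetry is available for such expressions. What is actually going on is that the plus sign in the paper's definition of an $\mathcal{O}$-operator on a BiHom-Lie conformal algebra is inconsistent with the rest of the section: Lemma \ref{lem4.4} needs $T$ to carry the sub-adjacent bracket $m_1*_\l m_2-\phi^{-1}\psi(m_2)*_{-\p-\l}\phi\psi^{-1}(m_1)$ of the induced preLie product $m_1*_\l m_2=\rho(T(m_1))_\l m_2$ onto $[T(m_1)_\l T(m_2)]$, which forces the second $\rho$-term to enter with a minus (compare the classical $[Ta,Tb]=T(\mathrm{ad}(Ta)b-\mathrm{ad}(Tb)a)$). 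With the corrected sign your Lie computation is a direct substitution --- the sub-adjacent bracket of the transported preLie product is literally $T$ applied to the right-hand side of the $\mathcal{O}$-operator identity --- and, in the necessity direction, $T=\mathrm{id}$ satisfies the Lie identity for $\rho=L_{*}$ precisely because it reproduces the sub-adjacent formula for $[\cdot_\l\cdot]$; no skew-symmetry argument is needed in either place.
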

\begin{proof}Suppose there exists a bijective $\mathcal{O}$-operator $T: M \to B$ associated with the conformal representation $(l,r,\rho,\phi,\psi, M)$. Then, for all $x, y \in B$, the compatible BiHom-pre-Poisson conformal algebra structure on $B$ is defined as follows:
\begin{align*}
x \<_\l y &= T(l(x)_\l T^{-1}(y)),\\ x \>_\l y &= T(r(y)_{-\p-\l}T^{-1}(x)),\\ x *_\l y &= T(\rho(x)_\l T^{-1}(y)). 
\end{align*}
	Conversely, if $(B, \<_\l,\>_\l,*_\l, \a,\b)$ is a BiHom-pre-Poisson conformal algebra, and $(B, *_\l, [._\l.], \a,\b)$ is the underlying BiHom-Poisson conformal algebra, then the identity map $id$ is an $\mathcal{O}$-operator on $B$ with respect to the regular conformal representation $(L_\<, R_\>, ad,\a,\b, B)$. \end{proof}
\begin{ex}Consider a noncommutative BiHom-Poisson conformal algebra $(B, *_\l, [._\l.], \a,\b)$  and a Rota-Baxter operator $ R : B \to B$ on it. By defining the new operations $\<_\l,\>_\l, \cdot_{\l}$ on $B$, yield the BiHom-pre-Poisson conformal algebra $ (B, \<_\l,\>_\l, \cdot_{\l}, \a, \b) $ defined by \begin{align*} x\cdot_\l y = [R(x)_\l y],x \<_\l y= R(x)*_\l y,x \>_\l y= x *_\l R(y). \end{align*} In this case $ R $ act as a homomorphism of the sub-adjacent BiHom-Poisson conformal algebra $ (B,\cdot'_\l, [._\l.]',\a,\b) $ and $ (B,\cdot_\l, [._\l.] ,\a,\b) $, given by \begin{align*} [x_\l y]'= x *_\l y- \a^{-1}\b(y) *_{-\p-\l}\a\b^{-1}(x)	\quad and \quad  x\cdot'_\l y = x \<_\l y + x\>_\l y. \end{align*}\end{ex}

\end{document}